\newtheorem{proposition}{Proposition}[section]
\newtheorem{definition}{Definition}[section]
\newtheorem{theorem}{Theorem}[section]
\newtheorem{corollary}{Corollary}[section]
\newtheorem{lemma}{Lemma}[section]
\newtheorem{example}{Example}[section]
\numberwithin{equation}{section}
\newcommand{\Q}{\mathbb{Q}}
\newcommand{\R}{\mathbb{R}}
\newcommand{\C}{\mathbb{C}}
\newcommand{\bpp}{\big|\big|}
\newcommand{\Bpp}{\Big|\Big|}
\DeclareMathOperator{\tr}{tr}
\DeclareMathOperator{\bcirc}{bcirc}
\DeclareMathOperator{\unfold}{unfold}
\DeclareMathOperator{\fold}{fold}
\DeclareMathOperator{\diag}{diag}
\DeclareMathOperator{\spec}{spec}
\journal{}
\begin{document}

\begin{frontmatter}

\title{Geometric mean for T-positive definite tensors \\and associated Riemannian geometry}

\author{Jeong-Hoon Ju$^{1}$}
\ead{jjh793012@naver.com}

\author{Taehyeong Kim$^{2}$}
\ead{thkim0519@knu.ac.kr}

\author{Yeongrak Kim$^{1, 3}$}
\ead{yeongrak.kim@pusan.ac.kr}

\author{Hayoung Choi$^{2,4^*}$}
\ead{hayoung.choi@knu.ac.kr}

\address{
$^1$Department of Mathematics, Pusan National University, 
2 Busandaehak-ro 63 beon-gil, Geumjeong-gu, 46241 Busan, Republic of Korea}
\address{
$^2$Nonlinear Dynamics and Mathematical Application Center, Kyungpook National University,80, Daehak-ro, Buk-gu, 41566 Daegu, Republic of Korea}
\address{
$^3$Institute of Mathematical Science, Pusan National University, 
2 Busandaehak-ro 63 beon-gil, Geumjeong-gu, 46241 Busan, Republic of Korea}
\address{
$^4$Department of Mathematics, Kyungpook National University, 
80, Daehak-ro, Buk-gu, 41566 Daegu, Republic of Korea}
\cortext[cor1]{corresponding author. Hayoung Choi (hayoung.choi@knu.ac.kr)}

\begin{abstract}
In this paper, we generalize the geometric mean of two positive definite matrices to that of third-order tensors using the notion of T-product. Specifically, we define the geometric mean of two T-positive definite tensors and verify several properties that ``mean"  should satisfy including the idempotence and the commutative property, and so on. Moreover, it is shown that the geometric mean is a unique T-positive definite solution of an algebraic Riccati tensor equation and can be expressed as solutions of algebraic Riccati matrix equations. In addition, we investigate the Riemannian manifold associated with the geometric mean for T-positive definite tensors, considering it as a totally geodesic embedded submanifold of the Riemannian manifold associated with the case of matrices. It is particularly shown that the geometric mean of two T-positive definite tensors is the midpoint of a unique geodesic joining the tensors, and the manifold is a Cartan-Hadamard-Riemannian manifold.
\end{abstract}

\begin{keyword}
Geometric mean, T-product, T-positive definite tensor \\
2020 Mathematics Subject Classification. 15A69, 15A72, 15B48, 47A64, 53A45
\end{keyword}

\end{frontmatter}

\section{Introduction}

The geometric mean of two $N \times N$ Hermitian (or symmetric) positive definite matrices $A,B$ is defined as
\begin{equation}\label{eqMatrixGeometricMean}
	A \# B:=A^{\frac{1}{2}}(A^{-\frac{1}{2}}B A^{-\frac{1}{2}})^{\frac{1}{2}}A^{\frac{1}{2}}.
\end{equation}
The geometric mean was introduced by Pusz and Woronowicz \citep{pusz1975functional}, and by  Ando \citep{ando1978topics} for positive operators. Then Kubo and Ando \citep{kubo1980means} proved some results on L\"owner inequalities related to the geometric mean. Moreover, Lawson and Lim \citep{lawson2001geometric} showed that geometric averages satisfy properties that ``means'' generally should have, such as the idempotence and the commutative property.

In addition, the Riemannian geometry associated with the geometric mean is studied by Moakher \citep{moakher2005differential}, and by Bhatia and Holbrook \citep{bhatia2006riemannian}. The Riemannian metric with respect to $P\in {\mathcal P}_N$ is defined on the convex cone ${\mathcal P}_N$ of $N \times N$ Hermitian positive definite matrices as the trace metric $\tr (P^{-1}XP^{-1}Y),$ where $X,Y$ are in the Euclidean space ${\mathcal H}_N$ of $N \times N$ Hermitian matrices. 
The Riemannian distance between $A,B \in{\mathcal P}_N$ with respect to the above metric is given by $\delta(A,B) = \| \log (A^{-1/2}BA^{-1/2}) \|_F$, and the unique (up to parametrization) geodesic joining $A$ and $B$ is given as the curve of weighted geometric means,
\begin{equation}\label{eqw-g-mean}
t \in [0,1] \ \, \longmapsto \ \, A \#_{t} B := A^{1/2} (A^{-1/2} B A^{-1/2})^{t} A^{1/2}
\end{equation}
so that the geometric mean $A \# B$ is the midpoint of the geodesic. Moreover, this Riemannian manifold ${\mathcal P}_N$ is a Cartan-Hadamard manifold, that is, a simply connected complete manifold with nonpositive (sectional) curvature.

The weighted geometric mean is intimately connected to the concept of matrix interpolation, providing a smooth transition between the matrices $A$ and $B$. And it, including geometric mean, also has roles in several matrix equations revealing its utility in various applications including statistic mechanics \citep{dinh2022some, jung2009solution, lee2022nonlinear, lee2020nonlinear} and quantum theory \citep{cree2020fidelity}. The associated Riemannian geometry also has various ongoing researches as its applications, including data clustering \citep{piao2020kernel, zheng2017clustering} and visual recognition \citep{cherian2016positive, dong2017deep}.

There are two basic ways to generalize the notion of geometric mean; one is for three or more positive matrices, and the other is for tensors. There are several attempts on the study of geometric mean for several positive matrices, such as Ando-Li-Mathias mean \citep{ando2004geometric} or Karcher mean \citep{bhatia2006riemannian, moakher2005differential},  offering a broader spectrum of applications in multivariate statistical analysis and information geometry. However, the generalization for tensors has not been understood very much.

Tensors provide a framework for generalizing algebraic operations over vectors and matrices, enabling the compact representation and manipulation of high-dimensional data \citep{qi2017tensor}. This mathematical structure is particularly valuable in fields such as matrix analysis and numerical linear algebra, as it enables and eases complicated operations in machine learning and data analysis \citep{kolda2009tensor}.

As a linear map can be represented by a matrix, a multilinear map can be represented by a tensor, that is, a tensor is a multilinear map. A big difference between two notions appears when we define a multiplication. The matrix multiplication naturally arises  since it represents the composition of the corresponding linear maps. On the other hand, the composition of multilinear maps does not naturally appear in general, so it is hard to generalize the matrix multiplication to tensor-tensor multiplication. This is one of  reasons why there has been less study on geometric mean for tensors.

Hence, before defining a geometric mean of two tensors, we need a convention how to multiply two given tensors. In this paper we use the tool known as the \emph{T-product} of tensors that Kilmer, Martin, and Perrone suggested \citep{kilmer2011factorization, kilmer2008third}. Although it does not represent the composition of multilinear maps,  Braman \citep{braman2010third} showed that it represents the composition of linear maps on specific finitely generated free modules. 
After that, the research on the T-product was carried out in many aspects: tensor function theory \citep{lund2020tensor, miao2020generalized}, T-Jordan canonical form \citep{miao2021t}, tensor inequalities \citep{cao2023some}, perturbation theory \citep{cao2022perturbation, cong2022characterizations, mo2021perturbation}, and applications to imaging data \citep{chen2023perturbations, tarzanagh2018fast}. In particular, Zheng, Huang and Wang defined T-positive (semi)definite tensor and introduced T-(semi)definite programming \citep{zheng2021t}. The principal aim of this study is to extend the notion of geometric mean from matrices to tensors while carefully examining various properties.

The structure of this paper is as follows. In Section \ref{Sect:Preliminaries}, we review the notion of T-product and T-positive definite tensor and basic properties of them. In Section \ref{Sect:Geometric Mean of T-positive Definite Tensors}, we define the geometric mean of T-positive definite tensors and prove some properties which ``mean'' has to satisfy in the sense of Lawson-Lim \citep{lawson2001geometric}. In addition, we define T-L\"owner order for Hermitian tensors and prove some inequalities. In Section \ref{Sect:Riemannian Geometry}, we introduce a Riemannian metric on the convex open cone of T-positive definite tensors, and interpret the geometric mean in terms of this Riemannian metric. In particular, we prove that the geometric mean of two T-positive definite tensors $\mathcal{A},\mathcal{B}$ is the midpoint of the geodesic from $\mathcal{A}$ to $\mathcal{B}$ in this Riemannian manifold, and this Riemannian manifold is complete and has nonpositive curvature.

\bigskip
\section{Preliminaries}\label{Sect:Preliminaries}

In this section, we review some notations, definitions, and their basic properties from \citep{kilmer2011factorization, miao2021t, zheng2021t}. Although all the results in the references were established over $\R$, it is easy to see that the same results hold over $\C$.

A \emph{$k$th-order tensor} is a hyperarray $\mathcal{A}=[a_{i_1i_2\cdots i_k}] \in \C^{n_1 \times n_2 \times \cdots \times n_k}$, where $a_{i_1i_2\cdots i_k}$ denotes the $(i_1,i_2,...,i_k)$-th entry of $\mathcal{A}$. In this paper, we only consider third-order tensors $\mathcal{A} \in \C^{m \times n \times p}$.
A third-order tensor $\mathcal{A} \in \C^{m \times n \times p}$ can be naturally considered as a stack of $p$ frontal slices $A^{(1)}, \ldots, A^{(p)} \in \C^{m \times n}$. We denote the third-order tensor $\mathcal{A}$ as
\begin{equation*}
	\mathcal{A}=\left[A^{(1)}\Bpp A^{(2)}\Bpp \cdots \Bpp A^{(p)}\right].
\end{equation*}

\bigskip
\subsection{T-product}

The first main ingredient we need is a method to multiply two given tensors. \emph{T-product} is one of the generalized notions for multiplying two tensors, and it is defined via the operators, namely,  \emph{block circulant matricizing} and \emph{unfolding}.

\begin{definition}
	Let $\mathcal{A}=\left[A^{(1)}\bpp A^{(2)} \bpp \cdots \bpp A^{(p)}\right] \in \C^{m \times n \times p}$. We define the block circulant matricizing operator $\bcirc$ and the unfolding operator $\unfold$ as the concatenated matrices
	\begin{equation*}
		\bcirc(\mathcal{A})=\begin{bmatrix}
			A^{(1)} & A^{(p)} & A^{(p-1)} & \cdots & A^{(2)}\\
			A^{(2)} & A^{(1)} & A^{(p)} & \cdots & A^{(3)}\\
			A^{(3)} & A^{(2)} & A^{(1)} & \cdots & A^{(4)}\\
			\vdots & \vdots & \vdots & \ddots & \vdots\\
			A^{(p)} & A^{(p-1)} & A^{(p-2)} & \cdots & A^{(1)}
		\end{bmatrix}~~\text{and}~~\unfold(\mathcal{A})=\begin{bmatrix}
			A^{(1)}\\
			A^{(2)}\\
			\vdots\\
			A^{(p)}
		\end{bmatrix}.
	\end{equation*}
	We also define operators $\bcirc^{-1}$ and $\fold$  as the inverse of $\bcirc$ and $\unfold$, respectively, i.e.,
	\begin{equation*}
		\bcirc^{-1}(\bcirc(\mathcal{A}))=\mathcal{A}~~\text{and}~~\fold(\unfold(\mathcal{A}))=\mathcal{A}.
	\end{equation*}
\end{definition}

\begin{definition}[\citep{kilmer2011factorization}]
	Let $\mathcal{A}=\left[A^{(1)}\bpp A^{(2)} \bpp \cdots \bpp A^{(p)}\right] \in \C^{m \times n \times p}$ and $\mathcal{B}=\left[B^{(1)} \bpp B^{(2)}\bpp \cdots \bpp B^{(p)}\right] \in \C^{n \times s \times p}$. Then the T-product $\mathcal{A} * \mathcal{B}$ of $\mathcal{A}$ and $\mathcal{B}$ is defined as an $m \times s \times p$ tensor
	\begin{equation*}
    		\mathcal{A} * \mathcal{B}=\fold(\bcirc(\mathcal{A})\cdot \unfold(\mathcal{B})),
	\end{equation*}
	where $\cdot$ denotes matrix multiplication.
\end{definition}

For example, when $\mathcal{A}=\left[A^{(1)} \bpp A^{(2)} \bpp A^{(3)}\right] \in \C^{m \times n \times 3}$ and $\mathcal{B}=\left[B^{(1)} \bpp B^{(2)} \bpp B^{(3)}\right] \in \C^{n \times s \times 3}$, 
\begin{align*}
		&\mathcal{A}*\mathcal{B}=\fold\left(\begin{bmatrix}
			A^{(1)} & A^{(3)} & A^{(2)}\\
			A^{(2)} & A^{(1)} & A^{(3)}\\
			A^{(3)} & A^{(2)} & A^{(1)}
		\end{bmatrix}\begin{bmatrix}
			B^{(1)}\\
			B^{(2)}\\
			B^{(3)}
		\end{bmatrix} \right)
		=\fold\left( \begin{bmatrix}
			A^{(1)}B^{(1)}+A^{(3)}B^{(2)}+A^{(2)}B^{(3)}\\
			A^{(2)}B^{(1)}+A^{(1)}B^{(2)}+A^{(3)}B^{(3)}\\
			A^{(3)}B^{(1)}+A^{(2)}B^{(2)}+A^{(1)}B^{(3)}
		\end{bmatrix} \right)\\
		&=\left[A^{(1)}B^{(1)}+A^{(3)}B^{(2)}+A^{(2)}B^{(3)}\Bpp
			A^{(2)}B^{(1)}+A^{(1)}B^{(2)}+A^{(3)}B^{(3)}\Bpp
			A^{(3)}B^{(1)}+A^{(2)}B^{(2)}+A^{(1)}B^{(3)}\right].
\end{align*}

The following properties are easy to verify and useful.

\begin{lemma}[\citep{kilmer2011factorization, miao2021t}]\label{Lemma:BasicPropertiesTproduct}
	Let $\mathcal{A} \in \C^{m \times n \times p}, \mathcal{B} \in \C^{n \times s \times p}$ and $\mathcal{C} \in \C^{s \times r \times p}$. Then
	\begin{itemize}
		\item [(\romannumeral1)] $\bcirc(\mathcal{A}*\mathcal{B})=\bcirc(\mathcal{A})\cdot \bcirc(\mathcal{B})$.
		\item [(\romannumeral2)] $(\mathcal{A} * \mathcal{B})* \mathcal{C}=\mathcal{A} *( \mathcal{B}* \mathcal{C})$.
	\end{itemize}
\end{lemma}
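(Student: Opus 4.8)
The plan is to establish (i) by a direct computation and then obtain (ii) as a one-line consequence, using that $\bcirc$ is an invertible linear operator. The conceptual content of (i) is the elementary fact that block circulant matrices with $p$ blocks are closed under matrix multiplication, combined with an identification of the first block column of the product.

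For (i), I would start from the definition of the T-product, which gives $\unfold(\mathcal{A}*\mathcal{B}) = \bcirc(\mathcal{A})\cdot \unfold(\mathcal{B})$. Since $\unfold(\mathcal{B})$ is exactly the first block column of $\bcirc(\mathcal{B})$, the matrix $\bcirc(\mathcal{A})\cdot \bcirc(\mathcal{B})$ has $\unfold(\mathcal{A}*\mathcal{B})$ as its first block column. Next I would check that $\bcirc(\mathcal{A})\cdot \bcirc(\mathcal{B})$ is again block circulant: writing the $(i,j)$ block of $\bcirc(\mathcal{A})$ as $A^{((i-j)\bmod p)+1}$ and likewise for $\bcirc(\mathcal{B})$, the $(i,j)$ block of the product equals $\sum_{k=1}^{p} A^{((i-k)\bmod p)+1}\,B^{((k-j)\bmod p)+1}$, and the substitution $a=(i-k)\bmod p$ rewrites this as $\sum_{a=0}^{p-1} A^{a+1}\,B^{((i-j-a)\bmod p)+1}$, which depends only on $(i-j)\bmod p$. (Alternatively, one can block-diagonalize $\bcirc(\mathcal{A})$ and $\bcirc(\mathcal{B})$ simultaneously via the discrete Fourier transform, $(F_p\otimes I)\,\bcirc(\cdot)\,(F_p^{*}\otimes I)$ being block diagonal, and use that block diagonal matrices are closed under products.) A block circulant matrix is determined by its first block column, so $\bcirc(\mathcal{A})\cdot \bcirc(\mathcal{B})=\bcirc(\mathcal{A}*\mathcal{B})$, which is (i).

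For (ii), applying (i) twice yields
\[
\bcirc\big((\mathcal{A}*\mathcal{B})*\mathcal{C}\big)=\bcirc(\mathcal{A})\,\bcirc(\mathcal{B})\,\bcirc(\mathcal{C})=\bcirc\big(\mathcal{A}*(\mathcal{B}*\mathcal{C})\big),
\]
where the middle equality is merely associativity of ordinary matrix multiplication; since $\bcirc$ is invertible, the two tensors coincide. The only point requiring care is the bookkeeping of cyclic indices in the computation for (i) --- keeping the ``reduce mod $p$, then add $1$'' convention consistent across the definitions of $\bcirc$, of the T-product, and of the reindexing --- but this is routine and presents no genuine obstacle.
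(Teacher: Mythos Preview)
Your argument is correct. The paper does not actually give a proof of this lemma: it is stated as a citation from \citep{kilmer2011factorization, miao2021t} with the remark that the properties ``are easy to verify and useful,'' so there is nothing to compare against beyond that. Your verification --- identifying $\unfold(\mathcal{B})$ as the first block column of $\bcirc(\mathcal{B})$, checking directly that block circulant matrices are closed under multiplication via the cyclic-index computation, and then deducing (ii) from (i) by injectivity of $\bcirc$ --- is exactly the kind of routine check the paper is alluding to, and all the index bookkeeping is handled correctly.
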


Lemma \ref{Lemma:BasicPropertiesTproduct}.(\romannumeral1) makes sense since the product of two block circulant matrices is again a block circulant matrix. Note that T-product of $m \times n \times p$ and $n \times s \times p$ tensors gives a $m \times s \times p$ tensor. As a square matrix does not change the size of matrices by matrix multiplication, a tensor whose frontal slices are square matrices does not change the size of tensors by T-product; we call such a tensor a \emph{frontal square tensor}.

\begin{definition}[\citep{kilmer2011factorization}]
	The $n \times n \times p$ \emph{identity tensor} $\mathcal{I}_{n,p}$ is the tensor whose first frontal slice is the $n \times n$ identity matrix $I_n$, and whose other frontal slices are all $n \times n$ zero matrices $O_n$, i.e.,  $\mathcal{I}_{n,p}=\left[I_n \bpp O_n \bpp \cdots \bpp O_n\right]$ so that $\bcirc(\mathcal{I}_{n,p})=I_{np}$.
\end{definition}

\begin{definition}[\citep{kilmer2011factorization}]
	A frontal square tensor $\mathcal{A} \in \C^{n \times n \times p}$ is said to be \emph{invertible} (or \emph{nonsingular}) if it has an inverse tensor $\mathcal{X} \in \C^{n \times n \times p}$ such that
	\begin{equation*}
		\mathcal{A}*\mathcal{X}=\mathcal{X}*\mathcal{A}=\mathcal{I}_{n,p},
	\end{equation*}
	and denote the inverse of $\mathcal{A}$ by $\mathcal{A}^{-1}$. If $\mathcal{A}$ has no inverse, then we say that $\mathcal{A}$ is \emph{singular}.
\end{definition}

Note that the inverse of a block circulant matrix is also a block circulant \citep{trapp1973inverses}. The invertibility of a third-order frontal square tensor $\mathcal{A}$ is equivalent to the invertibility of its block circulant matricization $\bcirc(\mathcal{A})$ by Lemma \ref{Lemma:BasicPropertiesTproduct}.(\romannumeral1).

\begin{lemma}\label{LemmaInvertibility}
	For a frontal square tensor $\mathcal{A} \in \C^{n \times n \times p}$, $\mathcal{A}$ is invertible if and only if $\bcirc(\mathcal{A})$ is invertible.
\end{lemma}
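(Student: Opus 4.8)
The plan is to exploit the fact that $\bcirc$ is an injective algebra homomorphism with respect to the T-product, as recorded in Lemma \ref{Lemma:BasicPropertiesTproduct}.(\romannumeral1) and the identity $\bcirc(\mathcal{I}_{n,p}) = I_{np}$. First I would prove the easy direction. Suppose $\mathcal{A}$ is invertible, with inverse $\mathcal{A}^{-1}=\mathcal{X}$ satisfying $\mathcal{A}*\mathcal{X}=\mathcal{X}*\mathcal{A}=\mathcal{I}_{n,p}$. Applying $\bcirc$ to both equalities and invoking Lemma \ref{Lemma:BasicPropertiesTproduct}.(\romannumeral1) gives $\bcirc(\mathcal{A})\cdot\bcirc(\mathcal{X}) = \bcirc(\mathcal{X})\cdot\bcirc(\mathcal{A}) = \bcirc(\mathcal{I}_{n,p}) = I_{np}$, so $\bcirc(\mathcal{A})$ is invertible with $\bcirc(\mathcal{A})^{-1} = \bcirc(\mathcal{X})$.

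For the converse, suppose $\bcirc(\mathcal{A})$ is invertible. The one subtlety — and really the only point requiring care — is that the matrix inverse $\bcirc(\mathcal{A})^{-1}$ must be recognized as lying in the image of $\bcirc$, i.e.\ as being block circulant of the same block structure; otherwise we cannot fold it back to a tensor. This is exactly the content of the cited result on inverses of block circulant matrices \citep{trapp1973inverses}: since $\bcirc(\mathcal{A})$ is block circulant, so is $\bcirc(\mathcal{A})^{-1}$, and hence there is a (unique) tensor $\mathcal{X}\in\C^{n\times n\times p}$ with $\bcirc(\mathcal{X}) = \bcirc(\mathcal{A})^{-1}$, namely $\mathcal{X} = \bcirc^{-1}\big(\bcirc(\mathcal{A})^{-1}\big)$.

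It then remains to transport the matrix identities back to tensors. Using Lemma \ref{Lemma:BasicPropertiesTproduct}.(\romannumeral1) again, $\bcirc(\mathcal{A}*\mathcal{X}) = \bcirc(\mathcal{A})\cdot\bcirc(\mathcal{X}) = I_{np} = \bcirc(\mathcal{I}_{n,p})$, and similarly $\bcirc(\mathcal{X}*\mathcal{A}) = \bcirc(\mathcal{I}_{n,p})$. Since $\bcirc$ is injective (it has the explicit left inverse $\bcirc^{-1}$), we conclude $\mathcal{A}*\mathcal{X} = \mathcal{X}*\mathcal{A} = \mathcal{I}_{n,p}$, so $\mathcal{A}$ is invertible (with $\mathcal{A}^{-1}=\mathcal{X}$). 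I expect no genuine obstacle here beyond flagging the use of \citep{trapp1973inverses} to guarantee that $\bcirc(\mathcal{A})^{-1}$ is block circulant; everything else is a direct application of the homomorphism property and injectivity of $\bcirc$.
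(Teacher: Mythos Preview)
Your proposal is correct and follows exactly the approach the paper intends: the paper itself does not give a formal proof, but the sentence preceding the lemma already sketches the argument as the combination of Lemma~\ref{Lemma:BasicPropertiesTproduct}.(\romannumeral1) with the fact from \cite{trapp1973inverses} that the inverse of a block circulant matrix is again block circulant. Your write-up simply makes this explicit in both directions, so there is nothing to add.
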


\medskip

Recall that each circulant matrix can be diagonalized with the normalized discrete Fourier transform (DFT) matrix \citep{golub2013matrix}. These phenomena also arise for block circular matrices. For a matrix $A$, let $A^H$ denote the conjugate transpose of $A$.
We denote the block diagonal matrix with diagonal blocks $A_1,...,A_p$ by $\diag(A_1,...,A_p)$.

\begin{lemma}[\citep{kilmer2011factorization}]\label{LemmaFourierBlockDiag}
	Let $\mathcal{A}=\left[A^{(1)} \bpp A^{(2)} \bpp \cdots \bpp A^{(p)}\right] \in \C^{m \times n \times p}$. Then there exist $A_1,...,A_p \in \C^{m \times n}$ such that
	\begin{equation}\label{eqFourierBlockDiagMN}
		\bcirc(\mathcal{A})=(\mathbf{F}_p^H \otimes I_m)\cdot \operatorname{diag}(A_1,...,A_p)\cdot (\mathbf{F}_p \otimes I_n),
	\end{equation}
	where
	\begin{equation*}
		\mathbf{F}_p=\frac{1}{\sqrt{p}}\begin{bmatrix}
			1 & 1 & 1 & \cdots & 1\\
			1 & \omega & \omega^2 & \cdots & \omega^{p-1}\\
			\vdots & \vdots & \vdots & \ddots & \vdots\\
			1 & \omega^{p-1} & \omega^{2(p-1)} & \cdots & \omega^{(p-1)(p-1)}
		\end{bmatrix}~~\text{for}~~\omega=e^{\frac{2 \pi i}{p}}.
	\end{equation*}
\end{lemma}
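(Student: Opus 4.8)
\medskip
\noindent\emph{Proof idea.} The plan is to write $\bcirc(\mathcal{A})$ as a matrix polynomial in the cyclic shift and then diagonalize the shift by the DFT matrix. Let $S\in\C^{p\times p}$ be the cyclic down-shift permutation matrix, i.e. $Se_j=e_{j+1}$ with indices read modulo $p$ (so $S$ has $1$'s on the subdiagonal and in the top-right corner). Inspecting the block pattern in the definition of $\bcirc$, the $(k,\ell)$ block of $\bcirc(\mathcal{A})$ is $A^{((k-\ell)\bmod p+1)}$, while the $(k,\ell)$ entry of $S^{j-1}$ equals $1$ exactly when $j\equiv k-\ell+1\pmod p$; hence
\begin{equation*}
\bcirc(\mathcal{A})=\sum_{j=1}^{p} S^{j-1}\otimes A^{(j)}.
\end{equation*}

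Next I would record the standard diagonalization of the shift. Writing $D=\diag(1,\omega,\omega^2,\dots,\omega^{p-1})$ with $\omega=e^{2\pi i/p}$, one checks column by column that $S\mathbf{F}_p^{H}=\mathbf{F}_p^{H}D$: the $\ell$-th column $u_\ell$ of $\mathbf{F}_p^{H}$ has entries $(u_\ell)_k=\tfrac{1}{\sqrt p}\,\omega^{-(k-1)(\ell-1)}$, and $(Su_\ell)_k=(u_\ell)_{k-1}=\omega^{\ell-1}(u_\ell)_k$ (indices mod $p$). Since $\mathbf{F}_p$ is unitary, $\mathbf{F}_p\mathbf{F}_p^{H}=I_p$, so this gives $S=\mathbf{F}_p^{H}D\,\mathbf{F}_p$ and therefore $S^{j-1}=\mathbf{F}_p^{H}D^{j-1}\mathbf{F}_p$ for every $j$.

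Substituting into the expression for $\bcirc(\mathcal{A})$, writing $A^{(j)}=I_mA^{(j)}I_n$, and applying the Kronecker mixed-product rule $(PR)\otimes(QT)=(P\otimes Q)(R\otimes T)$ together with bilinearity of $\otimes$, I get
\begin{equation*}
\bcirc(\mathcal{A})=\sum_{j=1}^{p}\bigl(\mathbf{F}_p^{H}D^{j-1}\mathbf{F}_p\bigr)\otimes A^{(j)}
=(\mathbf{F}_p^{H}\otimes I_m)\Bigl(\sum_{j=1}^{p}D^{j-1}\otimes A^{(j)}\Bigr)(\mathbf{F}_p\otimes I_n).
\end{equation*}
Because each $D^{j-1}=\diag(1,\omega^{j-1},\dots,\omega^{(p-1)(j-1)})$ is diagonal, the middle factor $\sum_{j}D^{j-1}\otimes A^{(j)}$ is block diagonal with $k$-th diagonal block $A_k:=\sum_{j=1}^{p}\omega^{(k-1)(j-1)}A^{(j)}\in\C^{m\times n}$, which is exactly \eqref{eqFourierBlockDiagMN}.

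The only real obstacle is bookkeeping: fixing the orientation of the shift $S$ inside $\bcirc$ and the side on which $\mathbf{F}_p$ versus $\mathbf{F}_p^{H}$ acts. Both are settled by the elementary index computations above, and no analytic input is needed; as a sanity check one can verify the identity directly for $p=2$ and $p=3$. (Alternatively, one can quote the scalar circulant diagonalization from \citep{golub2013matrix} and lift it through the Kronecker structure $\bcirc(\mathcal{A})=\sum_j S^{j-1}\otimes A^{(j)}$, which is the same argument in compressed form.)
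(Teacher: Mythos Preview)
Your argument is correct: writing $\bcirc(\mathcal{A})=\sum_{j=1}^{p}S^{j-1}\otimes A^{(j)}$ with $S$ the cyclic down-shift, diagonalizing $S=\mathbf{F}_p^{H}D\,\mathbf{F}_p$, and pushing through the mixed-product rule yields exactly the claimed block diagonalization, and your index bookkeeping for both the block pattern of $\bcirc$ and the eigenvectors of $S$ checks out. As a bonus, your computation also recovers the explicit formula $A_k=\sum_{j=1}^{p}\omega^{(k-1)(j-1)}A^{(j)}$ that the paper records immediately after the lemma.

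Note, however, that the paper does not supply its own proof of this lemma at all: it is quoted as a known result from \citep{kilmer2011factorization} (ultimately the block analogue of the scalar circulant diagonalization in \citep{golub2013matrix}). So there is no ``paper's proof'' to compare against; your write-up is precisely the standard argument one would give, and nothing more is needed.
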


In addition, when $n=m$, the diagonal blocks $A_1 ,\ldots , A_p$ are of the form as
\begin{equation*}
    A_i = \sum\limits_{k=1}^{p} \omega^{(i-1)(k-1)}A^{(k)}.
\end{equation*}

\medskip
\subsection{T-positive definite tensors}

Before introducing the notion of T-positive definite tensor, we review \emph{T-Hermitian} tensor and \emph{Frobenius inner product} for tensors.

\begin{definition}[\citep{kilmer2011factorization}]
	For $\mathcal{A}=\left[A^{(1)}\bpp A^{(2)} \bpp \cdots  \bpp A^{(p)}\right] \in \C^{m \times n \times p}$, the \emph{T-conjugate transpose} $\mathcal{A}^H$ is defined as conjugate transposing each of the frontal slices and then reversing the order of transposed frontal slices $2$ through $p$:
$$
\mathcal{A}^H=\left[\left(A^{(1)}\right)^{H}\Bpp \left(A^{(p)}\right)^{H}\Bpp\cdots\Bpp\left(A^{(2)}\right)^{H}\right].
$$ 
\end{definition}

The process of reversing the order of transposed frontal slices $2$ through $p$ looks unnatural, however, it is natural in the perspective of block circulant matricizing as follows.

\begin{lemma}[\citep{kilmer2011factorization}]\label{LemmaHermitian}
	 $\bcirc(\mathcal{A}^H)=\bcirc(\mathcal{A})^H$ for all $\mathcal{A} \in \C^{m \times n \times p}$.
\end{lemma}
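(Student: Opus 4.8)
The plan is to prove the identity by comparing $\bcirc(\mathcal{A}^H)$ and $\bcirc(\mathcal{A})^H$ block by block, both being $np\times np$ matrices viewed as a $p\times p$ array of blocks. First I would record the block-circulant pattern read off from the definition of $\bcirc$: for a tensor $\mathcal{B}=\left[B^{(1)}\bpp\cdots\bpp B^{(p)}\right]$, the $(i,j)$ block of $\bcirc(\mathcal{B})$ equals $B^{(r)}$, where $r\in\{1,\dots,p\}$ is the residue of $i-j+1$ modulo $p$; in particular the $(i,j)$ block depends only on $i-j\bmod p$, which is exactly the block-circulant property.

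Next I would unwind the definition of the T-conjugate transpose. Writing $\mathcal{A}^H=\left[B^{(1)}\bpp\cdots\bpp B^{(p)}\right]$, one has $B^{(1)}=(A^{(1)})^H$ and $B^{(k)}=(A^{(p-k+2)})^H$ for $2\le k\le p$, and these cases merge into the single formula $B^{(k)}=(A^{(s_k)})^H$, where $s_k\in\{1,\dots,p\}$ is the residue of $2-k$ modulo $p$ (and $k\mapsto s_k$ is an involution of $\{1,\dots,p\}$). Combining this with the block pattern from the first step, the $(i,j)$ block of $\bcirc(\mathcal{A}^H)$ is $(A^{(s)})^H$ with $s$ the residue of $2-(i-j+1)=j-i+1$ modulo $p$. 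On the other hand, the $(i,j)$ block of $\bcirc(\mathcal{A})^H$ is the conjugate transpose of the $(j,i)$ block of $\bcirc(\mathcal{A})$, i.e.\ $(A^{(r)})^H$ with $r$ the residue of $j-i+1$ modulo $p$. Since $s=r$ for every pair $(i,j)$, the two matrices agree block by block, which proves the lemma. (It may also be worth including the $p=3$ instance explicitly, writing out both matrices against the displayed form in the definition of $\bcirc$, as a sanity check for the reader.)

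The only delicate point is the modular index bookkeeping — specifically, checking that the prescribed reversal of frontal slices $2$ through $p$ is precisely what converts the block-circulant pattern of $\bcirc(\mathcal{A})$ into its conjugate-transposed pattern — but once the residues $s$ and $r$ are matched there is nothing more to do. If a more structural argument is preferred, one can instead write $\bcirc(\mathcal{A})=\sum_{k=1}^{p}\Pi^{\,k-1}\otimes A^{(k)}$, where $\Pi$ is the $p\times p$ cyclic shift permutation matrix (with $\Pi^{0}=I_p$); then $(\Pi^{\,k-1})^H=\Pi^{\,p-k+1}=\Pi^{\,s_k-1}$ with powers read modulo $p$, so taking conjugate transposes termwise and reindexing by the involution $k\mapsto s_k$ yields $\bcirc(\mathcal{A})^H=\sum_{\ell=1}^{p}\Pi^{\,\ell-1}\otimes (A^{(s_\ell)})^H=\bcirc(\mathcal{A}^H)$. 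A third option starts from the Fourier block-diagonalization of Lemma~\ref{LemmaFourierBlockDiag}: using the formula $A_i=\sum_{k}\omega^{(i-1)(k-1)}A^{(k)}$ (valid for arbitrary $m,n$) one checks that the $i$-th block of $\mathcal{A}^H$ is $A_i^H$, whence $\bcirc(\mathcal{A}^H)=(\mathbf{F}_p^H\otimes I_n)\,\diag(A_1,\dots,A_p)^H\,(\mathbf{F}_p\otimes I_m)=\bcirc(\mathcal{A})^H$. Each of these is a short computation with no essential obstacle beyond the index arithmetic.
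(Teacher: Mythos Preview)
Your argument is correct: the block-index bookkeeping is right, and each of the three variants (direct block comparison, the $\Pi$-shift expansion, and the Fourier diagonalization) establishes the identity. Note, however, that the paper does not actually prove this lemma; it is stated as a known fact with a citation to \citep{kilmer2011factorization}, so there is no proof in the paper to compare against. Your write-up therefore supplies a self-contained justification that the paper omits.
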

	
\begin{definition}[\citep{kilmer2011factorization}]
	For a frontal square tensor $\mathcal{A} \in \C^{n \times n \times p}$, we say $\mathcal{A}$ is \emph{T-Hermitian} if $\mathcal{A}^H=\mathcal{A}$, or equivalently $\bcirc(\mathcal{A})^H=\bcirc(\mathcal{A})$. 
\end{definition}
The set of all $n \times n \times p$ complex T-Hermitian tensors is denoted by $\mathbb{H}^{n \times n \times p}$.

\begin{definition}[\citep{kilmer2011factorization}]
	Let $\mathcal{A}=[a_{ijk}]=\left[A^{(1)}\bpp A^{(2)}\bpp \cdots\bpp A^{(p)}\right],~ \mathcal{B}=\left[b_{ijk}]=[B^{(1)}\bpp B^{(2)}\bpp \cdots\bpp B^{(p)}\right] \in \C^{m \times n \times p}$.
	We define the \emph{Frobenius inner product} $\langle \cdot , \cdot \rangle_{\mathcal{F}}$  on $\C^{m \times n \times p}$  by
	\begin{equation*}
		\langle \mathcal{A},\mathcal{B} \rangle_{\mathcal{F}}:=\sum_{i=1}^{m}\sum_{j=1}^n\sum_{k=1}^p \overline{a_{ijk}}b_{ijk}
  =\sum_{k=1}^p \langle A^{(k)}, B^{(k)} \rangle_{F},
	\end{equation*}
	where the $\langle \cdot, \cdot \rangle_{F}$ at the right-hand side denotes the usual Frobenius inner product on $\C^{m \times n}$. Then the Frobenius norm associated with the inner product is as follows: 
	\begin{equation*}
		||\mathcal{A}||_{\mathcal{F}}=\sqrt{\langle \mathcal{A}, \mathcal{A} \rangle_{\mathcal{F}}}=\sqrt{\sum_{k=1}^p \langle A^{(k)},A^{(k)} \rangle_F}.
	\end{equation*}
	We simply denote the Frobenius inner product $\langle \cdot , \cdot \rangle_{\mathcal{F}}$ and norm $||\cdot||_{\mathcal{F}}$ as $\langle \cdot,\cdot \rangle$ and $||\cdot||$, respectively, unless there is any confusion.
\end{definition}

\begin{definition}[\citep{zheng2021t}]\label{DefinitionPositiveDefinite}
	Let $\mathcal{A} \in \mathbb{H}^{n \times n \times p}$. We say $\mathcal{A}$ is a \emph{T-positive (semi-)definite tensor} if 
	\begin{equation*}
		\langle \mathcal{X},\mathcal{A}*\mathcal{X} \rangle > 0 ~~(\geq 0)
	\end{equation*}
	for any $\mathcal{X}\in \C^{n \times 1 \times p} \setminus \{\mathcal{O}\}$, where $\mathcal{O}$ denotes the zero tensor whose all entries are zero. We denote the set of all $n \times n \times p$ T-positive (semi-)definite tensors as $\mathbb{H}_{++}^{n \times n \times p}$ ( $\mathbb{H}_{+}^{n \times n \times p}$).
\end{definition}

Recall that the positive (semi-)definiteness of a Hermitian matrix $A \in \C^{n \times n}$ is defined by the inequality  
	\begin{equation*}
		\langle x, Ax \rangle>0~~( \geq 0)
	\end{equation*}
	for every nonzero $x \in \C^n$, where $\langle \cdot, \cdot \rangle$ denotes the Frobenius inner product on $\C^n$. By considering $\mathcal{X} \in \C^{n \times 1 \times p}$ as a matrix in $\C^{n \times p}$, we may regard Definition \ref{DefinitionPositiveDefinite} as a generalization from the matrix case. Furthermore, the convention to identify a matrix in $\C^{n \times p}$ to a tensor in $\C^{n \times 1 \times p}$ has an algebraic meaning when tensors $\C^{n \times n \times p}$ are considered as linear operators on a space of matrices \citep{braman2010third}.

The positive definiteness of a T-Hermitian tensor can be determined by Lemma \ref{Lemma:BasicPropertiesTproduct}.(\romannumeral1). The following lemma about T-positive (semi-)definiteness is first observed over $\R$ as in \citep{zheng2021t}, and similar arguments also work for tensors over $\C$.

\begin{lemma}[\citep{zheng2021t}]\label{LemmaPositiveDefinitenessBlockDiagonal}
	Let $\mathcal{A} \in \mathbb{H}^{n \times n \times p}$. The followings are equivalent:
	\begin{itemize}
		\item [(\romannumeral1)] $\mathcal{A}$ is T-positive (semi-)definite;
		\item [(\romannumeral2)] $\bcirc(\mathcal{A})$ is Hermitian and positive (semi-)definite;
		\item [(\romannumeral3)] All the matrices $A_i$ are Hermitian and  positive (semi-)definite, where $A_1,...,A_p \in \C^{n \times n}$ are the diagonal blocks in (\ref{eqFourierBlockDiagMN}) (for $m=n$).
	\end{itemize}
\end{lemma}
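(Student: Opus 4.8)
The plan is to collapse all three conditions onto the single $np\times np$ matrix $M:=\bcirc(\mathcal{A})$ and then diagonalize $M$. Observe first that the standing hypothesis $\mathcal{A}\in\mathbb{H}^{n\times n\times p}$ already forces $M$ to be Hermitian by Lemma \ref{LemmaHermitian}, so the genuine content of (ii) and (iii) is the positive (semi-)definiteness; I will nonetheless carry the Hermitian-ness along, as it comes for free.

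The first step is to record the identity
\[
\langle \mathcal{X},\mathcal{A}*\mathcal{X}\rangle \;=\; x^{H} M\, x,\qquad x:=\unfold(\mathcal{X})\in\C^{np},
\]
valid for every $\mathcal{X}\in\C^{n\times 1\times p}$. Since the frontal slices of such an $\mathcal{X}$ are $n\times1$, the array $x$ is an honest column vector; the definition of the T-product together with $\unfold\circ\fold=\mathrm{id}$ gives $\unfold(\mathcal{A}*\mathcal{X})=M\,x$, and because $\unfold$ merely stacks the frontal slices it preserves the Frobenius inner product, whence $\langle\mathcal{X},\mathcal{A}*\mathcal{X}\rangle=\langle x,Mx\rangle=x^{H}Mx$. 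As $\mathcal{X}$ ranges over $\C^{n\times1\times p}\setminus\{\mathcal{O}\}$, the vector $x$ ranges over $\C^{np}\setminus\{0\}$, so the defining inequality of T-positive (semi-)definiteness for $\mathcal{A}$ is precisely the assertion that the Hermitian matrix $M$ is positive (semi-)definite. This yields (i) $\Leftrightarrow$ (ii).

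For (ii) $\Leftrightarrow$ (iii), I would invoke Lemma \ref{LemmaFourierBlockDiag} in the square case $m=n$, giving $M=(\mathbf{F}_p^{H}\otimes I_n)\,\diag(A_1,\dots,A_p)\,(\mathbf{F}_p\otimes I_n)$. Since $\mathbf{F}_p$ is unitary, $U:=\mathbf{F}_p\otimes I_n$ is unitary with $U^{H}=\mathbf{F}_p^{H}\otimes I_n$, so $M$ and $\diag(A_1,\dots,A_p)$ differ by a unitary congruence (which here is simultaneously a similarity). Unitary congruence preserves Hermitian-ness and positive (semi-)definiteness, and a block-diagonal matrix has either property if and only if each of its diagonal blocks does; hence $M$ is Hermitian positive (semi-)definite exactly when every $A_i$ is, which is (iii).

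The argument is essentially bookkeeping built on Lemmas \ref{Lemma:BasicPropertiesTproduct}, \ref{LemmaHermitian}, and \ref{LemmaFourierBlockDiag}; the only place that needs a moment's care is the displayed identity, where one must use that $\mathcal{X}\in\C^{n\times1\times p}$ turns $\unfold(\mathcal{X})$ into a genuine vector (so that $x^{H}Mx$ makes sense) and that $\unfold$ is an isometry for $\langle\cdot,\cdot\rangle_{\mathcal{F}}$.
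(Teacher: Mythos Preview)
Your argument is correct. The paper does not supply its own proof of this lemma: it is stated as a result of \citep{zheng2021t} (over $\R$, with the remark that ``similar arguments also work for tensors over $\C$''), so there is no in-paper proof to compare against. What you have written is exactly the standard route one would take: reduce (i)$\Leftrightarrow$(ii) to the identity $\langle\mathcal{X},\mathcal{A}*\mathcal{X}\rangle=\unfold(\mathcal{X})^{H}\,\bcirc(\mathcal{A})\,\unfold(\mathcal{X})$, and reduce (ii)$\Leftrightarrow$(iii) to the unitary block-diagonalization of Lemma~\ref{LemmaFourierBlockDiag}. Both steps are handled cleanly, and your observation that $\mathcal{A}\in\mathbb{H}^{n\times n\times p}$ already makes $\bcirc(\mathcal{A})$ Hermitian via Lemma~\ref{LemmaHermitian} is the right way to dispose of that part of the claim.
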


Recall that a Hermitian positive (semi-)definite matrix has a unique positive (semi-)definite $k$th root. Hence, by Lemma  \ref{Lemma:BasicPropertiesTproduct}.(\romannumeral1), we obtain the following generalization of $k$th root of a T-positive (semi-)definite tensor.

\begin{lemma}[\citep{zheng2021t}]\label{LemmaKthRoot}
	For a positive integer $k$, a T-positive (semi-)definite $\mathcal{A} \in \mathbb{H}^{n \times n \times p}$ has a unique T-positive (semi-)definite $k$th root $\mathcal{B}\in \mathbb{H}^{n \times n \times p}$, that is, $\mathcal{B}^k=\mathcal{A}$. 
\end{lemma}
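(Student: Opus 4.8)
The plan is to push the whole statement down to the level of block circulant matrices via the operator $\bcirc$, using that $\bcirc$ is injective and multiplicative (Lemma \ref{Lemma:BasicPropertiesTproduct}.(\romannumeral1)), that it intertwines the T-Hermitian structure with the Hermitian structure (Lemma \ref{LemmaHermitian}), and that it intertwines T-positive (semi-)definiteness with positive (semi-)definiteness (Lemma \ref{LemmaPositiveDefinitenessBlockDiagonal}). Write $M:=\bcirc(\mathcal{A})$, which is Hermitian positive (semi-)definite by Lemma \ref{LemmaPositiveDefinitenessBlockDiagonal}, and let $N$ be its unique Hermitian positive (semi-)definite $k$th root.

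The one non-formal point is to check that $N$ is again block circulant. I would do this by exhibiting $N$ as a polynomial in $M$: since $M$ is Hermitian, it is diagonalizable with real eigenvalues $\lambda\ge 0$, so Lagrange interpolation on the (finitely many, distinct) eigenvalues gives a polynomial $q$ with $q(\lambda)=\lambda^{1/k}$ for every eigenvalue $\lambda$ of $M$ (with $q(0)=0$ in the semidefinite case); by the spectral theorem $q(M)$ is then precisely the positive (semi-)definite $k$th root, i.e. $q(M)=N$ and $N^k=M$. Because block circulant matrices form an algebra — closed under sums trivially and under products by the remark following Lemma \ref{Lemma:BasicPropertiesTproduct}, with $I_{np}=\bcirc(\mathcal{I}_{n,p})$ block circulant — the matrix $N=q(M)$ is block circulant. (Alternatively, one may invoke the Fourier block-diagonalization (\ref{eqFourierBlockDiagMN}): writing $M=(\mathbf{F}_p^H\otimes I_n)\diag(A_1,\dots,A_p)(\mathbf{F}_p\otimes I_n)$ with each $A_i$ Hermitian positive (semi-)definite by Lemma \ref{LemmaPositiveDefinitenessBlockDiagonal}, the matrix $(\mathbf{F}_p^H\otimes I_n)\diag(A_1^{1/k},\dots,A_p^{1/k})(\mathbf{F}_p\otimes I_n)$ is block circulant, Hermitian positive (semi-)definite, and its $k$th power is $M$.)

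Granting that, I would set $\mathcal{B}:=\bcirc^{-1}(N)$. Then $\bcirc(\mathcal{B}^k)=\bcirc(\mathcal{B})^k=N^k=M=\bcirc(\mathcal{A})$ by Lemma \ref{Lemma:BasicPropertiesTproduct}.(\romannumeral1), so $\mathcal{B}^k=\mathcal{A}$ by injectivity of $\bcirc$; since $N=N^H$, Lemma \ref{LemmaHermitian} gives $\bcirc(\mathcal{B}^H)=\bcirc(\mathcal{B})^H=N=\bcirc(\mathcal{B})$, hence $\mathcal{B}\in\mathbb{H}^{n\times n\times p}$; and since $N$ is positive (semi-)definite, Lemma \ref{LemmaPositiveDefinitenessBlockDiagonal} gives that $\mathcal{B}$ is T-positive (semi-)definite. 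For uniqueness, if $\mathcal{C}\in\mathbb{H}^{n\times n\times p}$ is T-positive (semi-)definite with $\mathcal{C}^k=\mathcal{A}$, then $\bcirc(\mathcal{C})$ is Hermitian positive (semi-)definite with $\bcirc(\mathcal{C})^k=\bcirc(\mathcal{C}^k)=M$, so by uniqueness of the positive (semi-)definite $k$th root at the matrix level $\bcirc(\mathcal{C})=N=\bcirc(\mathcal{B})$, whence $\mathcal{C}=\mathcal{B}$.

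The only genuinely non-routine ingredient is the observation that the positive (semi-)definite $k$th root of a block circulant matrix stays block circulant; everything else is a direct transcription through $\bcirc$. I would present the polynomial argument as the primary route, since it treats the definite and semidefinite cases uniformly and relies only on block circulant matrices forming a matrix algebra, keeping the Fourier-diagonalization computation as an alternative.
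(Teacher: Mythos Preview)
Your proposal is correct and follows essentially the same approach as the paper: the paper does not actually give a full proof here but merely cites \cite{zheng2021t} and remarks that the matrix fact together with Lemma~\ref{Lemma:BasicPropertiesTproduct}.(\romannumeral1) yields the tensor version. Your write-up faithfully fleshes out that sketch, and you correctly isolate the one non-automatic point---that the positive (semi-)definite $k$th root of a block circulant matrix remains block circulant---and handle it cleanly via the polynomial (or Fourier) argument.
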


\bigskip
\subsection{T-eigenvalue decompositions}

We review the notion of \emph{T-eigenvalue, T-trace, and T-eigenvalue decomposition} in \citep{zheng2021t}. We will use the following definitions considering \citep[Proposition 4.1]{zheng2021t}.

\begin{definition}[\citep{zheng2021t}]\label{DefTEigen}
	Let $\mathcal{A} \in \C^{m \times n \times p}$. Then $\lambda \in \C$ is said to be a \emph{T-eigenvalue} of $\mathcal{A}$ if $\lambda$ is an eigenvalue of $\bcirc(\mathcal{A})$. We denote the multiset of all T-eigenvalues of $\mathcal{A}$ by $\spec(\mathcal{A})$. 
\end{definition}

\begin{definition}[\citep{zheng2021t}]\label{DefTTrace}
	Let $\mathcal{A}=\left[A^{(1)}\bpp \cdots\bpp A^{(p)}\right] \in \C^{n \times n \times p}$. Then the \emph{trace} of $\mathcal{A}$, denoted by $\tr(\mathcal{A})$, is defined as 
\begin{equation*}    
\tr(\mathcal{A})=\tr\left(\bcirc(\mathcal{A})\right)=p\sum_{i=1}^n(A^{(1)})_{ii}.
\end{equation*}
\end{definition}

The following proposition is straightforward by applying Lemma \ref{Lemma:BasicPropertiesTproduct}.(\romannumeral1).

\begin{proposition}[\citep{zheng2021t}]\label{PropTEigen}
	Let $\mathcal{A},\mathcal{B} \in \C^{n \times n \times p}$, and let $\mathcal{C} \in \C^{n \times n \times p}$ be invertible.
	\begin{itemize}
		\item [(\romannumeral1)] $\tr(\mathcal{A}*\mathcal{B})=\tr(\mathcal{B}*\mathcal{A})$.
		\item [(\romannumeral2)] $\tr(\mathcal{A})=\sum_{\lambda \in \spec(\mathcal{A})}\lambda$.
		\item [(\romannumeral3)] $\spec(\mathcal{C}^{-1}*\mathcal{A}*\mathcal{C})=\spec(\mathcal{A})$.
		\item [(\romannumeral4)] $\tr(\mathcal{C}^{-1}*\mathcal{A}*\mathcal{C})=\tr(\mathcal{A})$.
	\end{itemize}
\end{proposition}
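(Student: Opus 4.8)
The plan is to transfer every assertion down to the level of block circulant matricizations, where each one becomes a standard fact about the trace and the spectrum of a square matrix. The only structural ingredient needed is the multiplicativity $\bcirc(\mathcal{A}*\mathcal{B})=\bcirc(\mathcal{A})\cdot\bcirc(\mathcal{B})$ from Lemma \ref{Lemma:BasicPropertiesTproduct}.(\romannumeral1), together with the defining identities $\tr(\mathcal{A})=\tr(\bcirc(\mathcal{A}))$ from Definition \ref{DefTTrace} and $\spec(\mathcal{A})=\spec(\bcirc(\mathcal{A}))$ (as multisets) from Definition \ref{DefTEigen}.

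For (\romannumeral1), I would write $\tr(\mathcal{A}*\mathcal{B})=\tr\big(\bcirc(\mathcal{A})\bcirc(\mathcal{B})\big)$, apply the cyclic invariance of the matrix trace to rewrite this as $\tr\big(\bcirc(\mathcal{B})\bcirc(\mathcal{A})\big)=\tr(\bcirc(\mathcal{B}*\mathcal{A}))$, and fold back to $\tr(\mathcal{B}*\mathcal{A})$. Part (\romannumeral2) is then immediate: the sum of the eigenvalues of the square matrix $\bcirc(\mathcal{A})$, counted with algebraic multiplicity, equals $\tr(\bcirc(\mathcal{A}))=\tr(\mathcal{A})$, and by definition that sum is precisely $\sum_{\lambda\in\spec(\mathcal{A})}\lambda$.

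For (\romannumeral3), the one small preliminary observation is that $\bcirc(\mathcal{C}^{-1})=\bcirc(\mathcal{C})^{-1}$: applying Lemma \ref{Lemma:BasicPropertiesTproduct}.(\romannumeral1) to $\mathcal{C}*\mathcal{C}^{-1}=\mathcal{C}^{-1}*\mathcal{C}=\mathcal{I}_{n,p}$ and using $\bcirc(\mathcal{I}_{n,p})=I_{np}$ gives $\bcirc(\mathcal{C})\bcirc(\mathcal{C}^{-1})=\bcirc(\mathcal{C}^{-1})\bcirc(\mathcal{C})=I_{np}$ (consistent with Lemma \ref{LemmaInvertibility}). Hence $\bcirc(\mathcal{C}^{-1}*\mathcal{A}*\mathcal{C})=\bcirc(\mathcal{C})^{-1}\bcirc(\mathcal{A})\bcirc(\mathcal{C})$ is similar, as a matrix, to $\bcirc(\mathcal{A})$, so the two share the same characteristic polynomial and therefore the same multiset of eigenvalues, i.e. $\spec(\mathcal{C}^{-1}*\mathcal{A}*\mathcal{C})=\spec(\mathcal{A})$. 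Finally (\romannumeral4) follows by combining (\romannumeral2) and (\romannumeral3); alternatively it drops out of associativity (Lemma \ref{Lemma:BasicPropertiesTproduct}.(\romannumeral2)) together with (\romannumeral1), via $\tr(\mathcal{C}^{-1}*\mathcal{A}*\mathcal{C})=\tr((\mathcal{A}*\mathcal{C})*\mathcal{C}^{-1})=\tr(\mathcal{A}*(\mathcal{C}*\mathcal{C}^{-1}))=\tr(\mathcal{A})$.

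There is essentially no obstacle here: the content is bookkeeping around Lemma \ref{Lemma:BasicPropertiesTproduct}.(\romannumeral1) and elementary linear algebra. The only point deserving an explicit line of justification is the identity $\bcirc(\mathcal{C}^{-1})=\bcirc(\mathcal{C})^{-1}$ used in (\romannumeral3)–(\romannumeral4), which is exactly where the hypothesis that $\mathcal{C}$ be invertible enters.
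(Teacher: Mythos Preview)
Your proposal is correct and follows exactly the approach the paper indicates: the paper's entire proof is the one-line remark that the proposition ``is straightforward by applying Lemma \ref{Lemma:BasicPropertiesTproduct}.(\romannumeral1),'' and you have simply spelled out that reduction to $\bcirc$ together with the standard matrix facts about trace and similarity.
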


In order to define T-eigenvalue decomposition, we need the notion of a unitary tensor.

\begin{definition}[\citep{kilmer2011factorization}]
	$\mathcal{Q} \in \C^{n \times n \times p}$ is \emph{unitary} if $\mathcal{Q}^H*\mathcal{Q}=\mathcal{Q}*\mathcal{Q}^H=\mathcal{I}_{n,p}$.
\end{definition}

Unitary tensors have a useful property about the Frobenius norm as similar as unitary matrices.

\begin{lemma}[\citep{kilmer2011factorization}]\label{LemmaUnitaryNorm}
	If $\mathcal{Q} \in \C^{n \times n \times p}$ is unitary, then $||\mathcal{Q}*\mathcal{A}||=||\mathcal{A}||$.
\end{lemma}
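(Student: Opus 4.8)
The plan is to push the statement down to the level of block circulant matrices through the operator $\bcirc$, where it collapses to the familiar fact that left multiplication by a unitary matrix preserves the Frobenius norm. To do this I first record the elementary relation between the two Frobenius norms: since $\bcirc(\mathcal{A})$ is built from $p$ block rows, each of which is a rearrangement of the frontal slices $A^{(1)},\dots,A^{(p)}$, every slice occurs exactly $p$ times in $\bcirc(\mathcal{A})$, so
\[
\|\bcirc(\mathcal{A})\|_F^2=p\sum_{k=1}^p\|A^{(k)}\|_F^2=p\,\|\mathcal{A}\|^2 ,
\]
that is, $\|\bcirc(\mathcal{A})\|_F=\sqrt{p}\,\|\mathcal{A}\|$. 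This identity holds for any third-order tensor, in particular for $\mathcal{A}$ and for $\mathcal{Q}*\mathcal{A}$.

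Next I would verify that $\bcirc(\mathcal{Q})$ is a unitary matrix in $\C^{np\times np}$. Using Lemma \ref{Lemma:BasicPropertiesTproduct}.(\romannumeral1) together with Lemma \ref{LemmaHermitian},
\[
\bcirc(\mathcal{Q})^H\,\bcirc(\mathcal{Q})=\bcirc(\mathcal{Q}^H)\,\bcirc(\mathcal{Q})=\bcirc(\mathcal{Q}^H*\mathcal{Q})=\bcirc(\mathcal{I}_{n,p})=I_{np},
\]
and symmetrically $\bcirc(\mathcal{Q})\,\bcirc(\mathcal{Q})^H=I_{np}$. Hence, applying Lemma \ref{Lemma:BasicPropertiesTproduct}.(\romannumeral1) once more and the unitary invariance of the matrix Frobenius norm,
\[
\|\mathcal{Q}*\mathcal{A}\|=\frac{1}{\sqrt{p}}\,\|\bcirc(\mathcal{Q}*\mathcal{A})\|_F=\frac{1}{\sqrt{p}}\,\|\bcirc(\mathcal{Q})\,\bcirc(\mathcal{A})\|_F=\frac{1}{\sqrt{p}}\,\|\bcirc(\mathcal{A})\|_F=\|\mathcal{A}\| ,
\]
which is the assertion.

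There is no real obstacle here: the whole content sits in the correspondence $\bcirc$ between tensors and matrices that the preliminaries have already established. The only point needing a moment's care is the normalization constant $\sqrt{p}$ relating $\|\cdot\|$ to $\|\bcirc(\cdot)\|_F$, and since the same constant appears on both ends of the final chain of equalities it simply cancels; a reader who prefers to diagonalize $\bcirc(\mathcal{Q})$ via Lemma \ref{LemmaFourierBlockDiag} (so that it becomes block diagonal with unitary diagonal blocks) reaches the same conclusion. One could alternatively argue inner-product-wise, from $\langle\mathcal{Q}*\mathcal{A},\mathcal{Q}*\mathcal{A}\rangle=\langle\mathcal{A},\mathcal{Q}^H*\mathcal{Q}*\mathcal{A}\rangle=\langle\mathcal{A},\mathcal{A}\rangle$, after checking the adjunction $\langle\mathcal{Q}*\mathcal{A},\mathcal{B}\rangle=\langle\mathcal{A},\mathcal{Q}^H*\mathcal{B}\rangle$; but that adjunction is again proved most cleanly through $\bcirc$, so the route above is the most economical.
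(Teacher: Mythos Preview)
Your argument is correct. The paper does not supply its own proof of this lemma---it is quoted from \citep{kilmer2011factorization} and stated without proof---so there is nothing to compare against directly. Your route through $\bcirc$ (establishing $\|\bcirc(\mathcal{A})\|_F^2=p\,\|\mathcal{A}\|^2$, verifying that $\bcirc(\mathcal{Q})$ is unitary via Lemma~\ref{Lemma:BasicPropertiesTproduct}.(\romannumeral1) and Lemma~\ref{LemmaHermitian}, and then invoking unitary invariance of the matrix Frobenius norm) is exactly the mechanism the paper relies on elsewhere; indeed the identity $\|\mathcal{A}\|^2=\tfrac{1}{p}\|\bcirc(\mathcal{A})\|_F^2$ is used verbatim in the proof of Proposition~\ref{PropFroEig}.
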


The existence of eigenvalue decomposition of a Hermitian matrix and Lemma \ref{LemmaFourierBlockDiag} ensure the existence of a T-eigenvalue decomposition of a T-Hermitian tensor as follows.

\begin{lemma}[\citep{zheng2021t}]
	Let $\mathcal{A} \in \mathbb{H}^{n \times n \times p}$. Then $\mathcal{A}$ can be factored as
	\begin{equation*}
		\mathcal{A}=\mathcal{U}*\mathcal{D}*\mathcal{U}^H,
	\end{equation*}
	where $\mathcal{U} \in \C^{n \times n \times p}$ is unitary and $\mathcal{D}=\left[\mathcal{D}^{(1)}\bpp \cdots\bpp \mathcal{D}^{(p)}\right] \in \C^{n \times n \times p}$  with diagonal matrices $D^{(i)}$ such that all the diagonal entries  of the block diagonalization 
	\begin{equation*}
		(\mathbf{F}_p \otimes I_n) \cdot \bcirc(\mathcal{D}) \cdot (\mathbf{F}_p^H \otimes I_n)
	\end{equation*} 
	are T-eigenvalues of $\mathcal{A}$. This factorization is called a \emph{T-eigenvalue decomposition} of $\mathcal{A}$.
\end{lemma}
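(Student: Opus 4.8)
The plan is to lift the matrix eigenvalue decompositions of the Fourier blocks of $\bcirc(\mathcal{A})$ back to the tensor level, using that $\bcirc$ is a linear bijection onto the space of block circulant matrices and is compatible with the T-product (Lemma \ref{Lemma:BasicPropertiesTproduct}.(\romannumeral1)). First I would invoke Lemma \ref{LemmaFourierBlockDiag} to write $\bcirc(\mathcal{A})=(\mathbf{F}_p^H\otimes I_n)\,\diag(A_1,\dots,A_p)\,(\mathbf{F}_p\otimes I_n)$ with $A_i=\sum_{k=1}^p\omega^{(i-1)(k-1)}A^{(k)}$. Since $\mathcal{A}$ is T-Hermitian, $\bcirc(\mathcal{A})$ is Hermitian by Lemma \ref{LemmaHermitian}, and conjugating by the unitary matrix $\mathbf{F}_p\otimes I_n$ shows that each $A_i$ is Hermitian; hence each $A_i$ has a matrix eigenvalue decomposition $A_i=U_iD_iU_i^H$ with $U_i\in\C^{n\times n}$ unitary and $D_i\in\C^{n\times n}$ a (real) diagonal matrix.

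Next I would define $\mathcal{U},\mathcal{D}\in\C^{n\times n\times p}$ to be the unique tensors whose frontal slices are the inverse discrete Fourier transforms of $(U_1,\dots,U_p)$ and $(D_1,\dots,D_p)$, namely $U^{(k)}=\frac1p\sum_{i=1}^p\omega^{-(i-1)(k-1)}U_i$ and likewise for $\mathcal{D}$. Because the block-diagonalization of Lemma \ref{LemmaFourierBlockDiag} is a linear bijection between block circulant matrices and block diagonal matrices, this choice yields $\bcirc(\mathcal{U})=(\mathbf{F}_p^H\otimes I_n)\,\diag(U_1,\dots,U_p)\,(\mathbf{F}_p\otimes I_n)$ and $\bcirc(\mathcal{D})=(\mathbf{F}_p^H\otimes I_n)\,\diag(D_1,\dots,D_p)\,(\mathbf{F}_p\otimes I_n)$. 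Moreover, each frontal slice $D^{(k)}$ is a linear combination of the diagonal matrices $D_i$, hence diagonal, so $\mathcal{D}$ has the asserted form $\mathcal{D}=\left[D^{(1)}\bpp\cdots\bpp D^{(p)}\right]$ with diagonal $D^{(i)}$.

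It then remains to check three points, each reducing to Lemma \ref{Lemma:BasicPropertiesTproduct}.(\romannumeral1) together with the injectivity of $\bcirc$. \emph{(a) $\mathcal{U}$ is unitary:} $\bcirc(\mathcal{U})$ is a product of unitary matrices, hence unitary, so $\bcirc(\mathcal{U}^H*\mathcal{U})=\bcirc(\mathcal{U})^H\bcirc(\mathcal{U})=I_{np}=\bcirc(\mathcal{I}_{n,p})$ and similarly $\bcirc(\mathcal{U}*\mathcal{U}^H)=I_{np}$, whence $\mathcal{U}^H*\mathcal{U}=\mathcal{U}*\mathcal{U}^H=\mathcal{I}_{n,p}$. \emph{(b) $\mathcal{A}=\mathcal{U}*\mathcal{D}*\mathcal{U}^H$:} the matricization $\bcirc(\mathcal{U}*\mathcal{D}*\mathcal{U}^H)=\bcirc(\mathcal{U})\bcirc(\mathcal{D})\bcirc(\mathcal{U})^H$ telescopes (the outer Fourier factors cancel) to $(\mathbf{F}_p^H\otimes I_n)\,\diag(U_1D_1U_1^H,\dots,U_pD_pU_p^H)\,(\mathbf{F}_p\otimes I_n)=(\mathbf{F}_p^H\otimes I_n)\,\diag(A_1,\dots,A_p)\,(\mathbf{F}_p\otimes I_n)=\bcirc(\mathcal{A})$. \emph{(c) the spectral claim:} by construction $(\mathbf{F}_p\otimes I_n)\,\bcirc(\mathcal{D})\,(\mathbf{F}_p^H\otimes I_n)=\diag(D_1,\dots,D_p)$, whose diagonal entries are the eigenvalues of the $A_i$; since $\diag(A_1,\dots,A_p)$ is unitarily similar to $\bcirc(\mathcal{A})$, these entries form precisely $\spec(\mathcal{A})$, the multiset of T-eigenvalues of $\mathcal{A}$, by Definition \ref{DefTEigen}.

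The main obstacle — essentially the only step beyond bookkeeping — is the fact used in the second paragraph that conjugating an \emph{arbitrary} block diagonal matrix by $\mathbf{F}_p^H\otimes I_n$ and $\mathbf{F}_p\otimes I_n$ always lands in the space of block circulant matrices, so that $\mathcal{U}$ and $\mathcal{D}$ genuinely exist as tensors (and $\mathcal{D}$ has diagonal frontal slices). This holds because the linear map $\bcirc(\mathcal{B})\mapsto\diag(B_1,\dots,B_p)$ of Lemma \ref{LemmaFourierBlockDiag} is injective between two spaces of the same dimension $pn^2$, hence a bijection whose inverse is exactly the above conjugation; alternatively one checks directly that $(\mathbf{F}_p^H\otimes I_n)\,\diag(M_1,\dots,M_p)\,(\mathbf{F}_p\otimes I_n)$ has $(i,j)$-block depending only on $i-j$ modulo $p$. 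Everything else is a formal consequence of Lemmas \ref{Lemma:BasicPropertiesTproduct}, \ref{LemmaHermitian} and \ref{LemmaFourierBlockDiag} together with the matrix spectral theorem.
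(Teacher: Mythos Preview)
Your proposal is correct and follows exactly the approach the paper indicates: the paper does not give a detailed proof but simply remarks that ``the existence of eigenvalue decomposition of a Hermitian matrix and Lemma \ref{LemmaFourierBlockDiag} ensure the existence of a T-eigenvalue decomposition,'' and your argument is precisely the natural fleshing-out of that sentence. Your only addition is making explicit the bijectivity of the Fourier block-diagonalization so that $\mathcal{U}$ and $\mathcal{D}$ exist as tensors, which is implicit in the paper's use of Lemma \ref{LemmaFourierBlockDiag}.
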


We will use the following proposition in Section \ref{Sect:Riemannian Geometry}.

\begin{proposition}\label{PropFroEig}
	If $\mathcal{A} \in  \mathbb{H}^{n \times n \times p}$ has a T-eigenvalue decomposition $\mathcal{A}=\mathcal{U}*\mathcal{D}*\mathcal{U}^H$, then
	\begin{equation*}
		||\mathcal{D}||^2=||\mathcal{A}||^2=\frac{1}{p}\left( \sum_{\lambda \in \spec(\mathcal{A})}\lambda^2\right).
	\end{equation*} 
\end{proposition}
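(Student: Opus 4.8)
The plan is to push everything through the block circulant matricization $\bcirc(\mathcal{A})$ and then invoke only elementary facts about the Frobenius norm of matrices. The one preliminary observation I would record is that for \emph{any} tensor $\mathcal{B}=\left[B^{(1)}\bpp \cdots \bpp B^{(p)}\right]\in\C^{m\times n\times p}$ we have $||\bcirc(\mathcal{B})||_F^2=p\,||\mathcal{B}||^2$: each frontal slice $B^{(k)}$ occurs exactly once in each of the $p$ block rows of $\bcirc(\mathcal{B})$, hence exactly $p$ times overall, so $||\bcirc(\mathcal{B})||_F^2=\sum_{k=1}^p p\,||B^{(k)}||_F^2 = p\,||\mathcal{B}||^2$. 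This single identity is what converts a statement about tensor norms into one about matrix norms.

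For the equality $||\mathcal{D}||^2=||\mathcal{A}||^2$, I would apply $\bcirc$ to $\mathcal{A}=\mathcal{U}*\mathcal{D}*\mathcal{U}^H$ and use Lemma \ref{Lemma:BasicPropertiesTproduct}.(\romannumeral1) together with Lemma \ref{LemmaHermitian} to get $\bcirc(\mathcal{A})=\bcirc(\mathcal{U})\,\bcirc(\mathcal{D})\,\bcirc(\mathcal{U})^H$. Since $\mathcal{U}$ is unitary, $\bcirc(\mathcal{U})\,\bcirc(\mathcal{U})^H=\bcirc(\mathcal{U}*\mathcal{U}^H)=\bcirc(\mathcal{I}_{n,p})=I_{np}$ (and symmetrically on the other side), so $\bcirc(\mathcal{U})$ is a genuinely unitary matrix. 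The Frobenius norm being unitarily invariant gives $||\bcirc(\mathcal{A})||_F=||\bcirc(\mathcal{D})||_F$, and the preliminary identity then yields $p\,||\mathcal{A}||^2=p\,||\mathcal{D}||^2$. (Alternatively one could argue directly on tensors via Lemma \ref{LemmaUnitaryNorm} plus $||\mathcal{B}^H||=||\mathcal{B}||$, but routing through $\bcirc$ is cleaner since $\bcirc$ is needed for the second equality anyway.)

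For the equality $||\mathcal{A}||^2=\frac1p\sum_{\lambda\in\spec(\mathcal{A})}\lambda^2$, I would use that $\mathcal{A}\in\mathbb{H}^{n\times n\times p}$ forces $\bcirc(\mathcal{A})$ to be Hermitian, hence $||\bcirc(\mathcal{A})||_F^2=\tr\!\big(\bcirc(\mathcal{A})^H\bcirc(\mathcal{A})\big)=\tr\!\big(\bcirc(\mathcal{A})^2\big)$; diagonalizing the Hermitian matrix $\bcirc(\mathcal{A})$ gives $\tr\!\big(\bcirc(\mathcal{A})^2\big)=\sum\mu^2$ over the eigenvalues $\mu$ of $\bcirc(\mathcal{A})$ counted with multiplicity, which by Definition \ref{DefTEigen} is precisely $\sum_{\lambda\in\spec(\mathcal{A})}\lambda^2$. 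Combining with the preliminary identity gives $p\,||\mathcal{A}||^2=\sum_{\lambda\in\spec(\mathcal{A})}\lambda^2$, and together with the previous paragraph this proves the chain of equalities.

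There is no real obstacle here; the proof is essentially bookkeeping. The only two points demanding any care are the multiplicity count in $\spec(\mathcal{A})$ — handled by the fact that Definition \ref{DefTEigen} defines it as the \emph{multiset} of eigenvalues of $\bcirc(\mathcal{A})$ — and the factor $p$ in $||\bcirc(\mathcal{B})||_F^2=p\,||\mathcal{B}||^2$, which must be tracked consistently through both equalities so that the $p$'s cancel correctly.
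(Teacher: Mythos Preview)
Your proof is correct and follows essentially the same approach as the paper. The paper's argument is terser: it cites Lemma~\ref{LemmaUnitaryNorm} directly for $\|\mathcal{D}\|=\|\mathcal{A}\|$ rather than routing through $\bcirc$, and for the second equality it simply invokes the identity $\|\mathcal{A}\|^2=\frac{1}{p}\|\bcirc(\mathcal{A})\|_F^2$ (your ``preliminary observation'') together with Definition~\ref{DefTEigen}---exactly the ingredients you spell out in more detail.
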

\begin{proof}
	The first equality holds by Lemma \ref{LemmaUnitaryNorm}.  Considering Definition \ref{DefTEigen}, the second equality holds since $||\mathcal{A}||^2=\frac{1}{p}||\bcirc(\mathcal{A})||^2$.
\end{proof}

 As one can easily notice in this section, the key observation is that Lemma \ref{Lemma:BasicPropertiesTproduct}.(\romannumeral1) converts problems of third-order tensor to problems of block circulant matrices.

\bigskip
\section{Geometric mean of two T-positive definite tensors}\label{Sect:Geometric Mean of T-positive Definite Tensors}

\subsection{Definition and basic properties}

The goal of this section is to generalize the geometric mean \eqref{eqMatrixGeometricMean} for T-positive definite tensors. To obtain a well-defined notion, we first check the following lemma in detail. 

\begin{lemma}\label{LemmaBinaryOper}
	Let $\mathcal{A},\mathcal{B} \in \mathbb{H}^{n \times n \times p}_{++}$.
	\begin{itemize}
		\item [(\romannumeral1)] $\mathcal{A}*\mathcal{B}*\mathcal{A} \in \mathbb{H}^{n \times n \times p}_{++}$.
		\item [(\romannumeral2)] $\mathcal{A}$ is invertible, and $\mathcal{A}^{-1} \in \mathbb{H}_{++}^{n \times n \times p}$.
		\item [(\romannumeral3)] $(\mathcal{A}^{\frac{1}{k}})^{-1}=(\mathcal{A}^{-1})^{\frac{1}{k}}$ for any positive integer $k$. We denote it by $\mathcal{A}^{-\frac{1}{k}}$.
  	\item [(\romannumeral4)] $\mathcal {A}^r \in \mathbb{H}_{++}^{n \times n \times p}$ for an arbitrary real number $r$.
	\end{itemize}
\end{lemma}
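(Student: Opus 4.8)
The plan is to transport everything to block circulant matrices, as the closing remark of Section \ref{Sect:Preliminaries} suggests. Write $M_{\mathcal A}:=\bcirc(\mathcal A)$ and $M_{\mathcal B}:=\bcirc(\mathcal B)$; by Lemma \ref{LemmaPositiveDefinitenessBlockDiagonal} the hypothesis $\mathcal A,\mathcal B\in\mathbb H^{n\times n\times p}_{++}$ says exactly that $M_{\mathcal A},M_{\mathcal B}$ are Hermitian positive definite, and by Lemma \ref{Lemma:BasicPropertiesTproduct}.(\romannumeral1) the map $\bcirc$ is a unital ring homomorphism. For (\romannumeral1), Lemma \ref{Lemma:BasicPropertiesTproduct}.(\romannumeral1) gives $\bcirc(\mathcal A*\mathcal B*\mathcal A)=M_{\mathcal A}M_{\mathcal B}M_{\mathcal A}$; since $M_{\mathcal A}$ is Hermitian and invertible and $M_{\mathcal B}$ is Hermitian positive definite, $M_{\mathcal A}M_{\mathcal B}M_{\mathcal A}=M_{\mathcal A}^{H}M_{\mathcal B}M_{\mathcal A}$ is Hermitian positive definite (for $x\neq 0$ one has $x^{H}M_{\mathcal A}M_{\mathcal B}M_{\mathcal A}x=(M_{\mathcal A}x)^{H}M_{\mathcal B}(M_{\mathcal A}x)>0$ because $M_{\mathcal A}x\neq 0$). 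In particular $\bcirc(\mathcal A*\mathcal B*\mathcal A)$ is Hermitian, so $\mathcal A*\mathcal B*\mathcal A$ is T-Hermitian, and Lemma \ref{LemmaPositiveDefinitenessBlockDiagonal} then gives $\mathcal A*\mathcal B*\mathcal A\in\mathbb H^{n\times n\times p}_{++}$. For (\romannumeral2), $M_{\mathcal A}$ is invertible, so $\mathcal A$ is invertible by Lemma \ref{LemmaInvertibility}; applying $\bcirc$ to $\mathcal A*\mathcal A^{-1}=\mathcal I_{n,p}$ and using Lemma \ref{Lemma:BasicPropertiesTproduct}.(\romannumeral1) yields $\bcirc(\mathcal A^{-1})=M_{\mathcal A}^{-1}$, which is again Hermitian positive definite, so $\mathcal A^{-1}\in\mathbb H^{n\times n\times p}_{++}$ by Lemma \ref{LemmaPositiveDefinitenessBlockDiagonal}.

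For (\romannumeral3), note first that by (\romannumeral2) and Lemma \ref{LemmaKthRoot} both $(\mathcal A^{1/k})^{-1}$ and $(\mathcal A^{-1})^{1/k}$ are T-positive definite. Using associativity (Lemma \ref{Lemma:BasicPropertiesTproduct}.(\romannumeral2)) and $\mathcal X*\mathcal X^{-1}=\mathcal I_{n,p}$, one checks the elementary identity $(\mathcal X^{k})^{-1}=(\mathcal X^{-1})^{k}$ for any invertible frontal square tensor $\mathcal X$. Taking $\mathcal X=\mathcal A^{1/k}$ and using $(\mathcal A^{1/k})^{k}=\mathcal A$ gives $\big((\mathcal A^{1/k})^{-1}\big)^{k}=(\mathcal A)^{-1}$, so $(\mathcal A^{1/k})^{-1}$ is a T-positive definite $k$th root of $\mathcal A^{-1}$; uniqueness of such a root in Lemma \ref{LemmaKthRoot} forces $(\mathcal A^{1/k})^{-1}=(\mathcal A^{-1})^{1/k}$, and we may unambiguously denote it $\mathcal A^{-1/k}$.

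For (\romannumeral4), one must first make sense of $\mathcal A^{r}$ for arbitrary real $r$, and the natural route is functional calculus through the Fourier block diagonalization. By Lemma \ref{LemmaFourierBlockDiag} write $M_{\mathcal A}=(\mathbf F_p^{H}\otimes I_n)\,\diag(A_1,\dots,A_p)\,(\mathbf F_p\otimes I_n)$, where each $A_i$ is Hermitian positive definite by Lemma \ref{LemmaPositiveDefinitenessBlockDiagonal}.(\romannumeral3); define $\bcirc(\mathcal A)^{r}:=(\mathbf F_p^{H}\otimes I_n)\,\diag(A_1^{r},\dots,A_p^{r})\,(\mathbf F_p\otimes I_n)$ using the ordinary spectral-calculus power of a Hermitian positive definite matrix, and set $\mathcal A^{r}:=\bcirc^{-1}\big(\bcirc(\mathcal A)^{r}\big)$. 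The right-hand side is block circulant by Lemma \ref{LemmaFourierBlockDiag} and Hermitian positive definite by Lemma \ref{LemmaPositiveDefinitenessBlockDiagonal}.(\romannumeral3), so $\mathcal A^{r}\in\mathbb H^{n\times n\times p}_{++}$; when $r\in\mathbb Z$ or $r=1/k$ this coincides blockwise with the usual integer power and positive $k$th root, so it is consistent with the notation already fixed in (\romannumeral2)–(\romannumeral3), and the exponent laws $\mathcal A^{r}*\mathcal A^{s}=\mathcal A^{r+s}$, $(\mathcal A^{r})^{s}=\mathcal A^{rs}$ follow blockwise.

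I expect (\romannumeral1)–(\romannumeral3) to be essentially bookkeeping with the homomorphism $\bcirc$ and the uniqueness of roots. The one place that needs genuine care is (\romannumeral4): there is no "$r$th power" available a priori, so the statement implicitly requires choosing a definition, and the main thing to get right is that the functional-calculus definition above is well defined (independent of the particular eigendecompositions of the $A_i$) and consistent with the rational powers already in use; once that is settled the positivity is immediate from Lemma \ref{LemmaPositiveDefinitenessBlockDiagonal}.
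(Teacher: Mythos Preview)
Your proof is correct. Parts (\romannumeral1) and (\romannumeral2) match the paper's argument essentially verbatim: transport via $\bcirc$, then use standard facts about Hermitian positive definite matrices (the paper cites Horn--Johnson where you give the one-line direct computation $x^{H}M_{\mathcal A}M_{\mathcal B}M_{\mathcal A}x>0$).

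Parts (\romannumeral3) and (\romannumeral4) are where you diverge. For (\romannumeral3) the paper simply pushes everything through $\bcirc$ and uses the known matrix identity $(M^{1/k})^{-1}=(M^{-1})^{1/k}$, writing a chain $\bcirc((\mathcal A^{1/k})^{-1})=\cdots=\bcirc((\mathcal A^{-1})^{1/k})$; you instead stay at the tensor level and invoke uniqueness in Lemma~\ref{LemmaKthRoot}. Both are fine and equally short. For (\romannumeral4) the difference is more substantive: the paper first shows $\mathcal A^{k}\in\mathbb H^{n\times n\times p}_{++}$ for positive integers $k$, combines this with (\romannumeral3) and Lemma~\ref{LemmaKthRoot} to handle all rational exponents, and then \emph{defines} $\mathcal A^{r}$ for real $r$ as a limit along rational approximations $q_i\to r$. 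Your route---define $\mathcal A^{r}$ directly via spectral calculus on the Fourier diagonal blocks $A_i$---is cleaner, since it gives the definition, the block-circulant shape, the positivity, and the exponent laws in one stroke without any limiting argument; the paper's approach has the mild advantage of not needing to appeal to Lemma~\ref{LemmaFourierBlockDiag} or to check consistency with the already-defined rational powers, but it must implicitly justify that the limit exists and stays block circulant.
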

\begin{proof} We prove by dealing with the block circulant matricization of each object.
	\begin{itemize}
		\item [(\romannumeral1)] By Lemma \ref{LemmaPositiveDefinitenessBlockDiagonal}, it suffices to show that $\bcirc(\mathcal{A}*\mathcal{B}*\mathcal{A})$ is Hermitian positive definite. By Lemma \ref{Lemma:BasicPropertiesTproduct}.(\romannumeral1), $\bcirc(\mathcal{A}*\mathcal{B}*\mathcal{A})=\bcirc(\mathcal{A})\cdot \bcirc(\mathcal{B})\cdot\bcirc(\mathcal{A})$.
			Here, both $\bcirc(\mathcal{A})$ and $\bcirc(\mathcal{B})$ are Hermitian positive definite matrices by Lemma \ref{LemmaPositiveDefinitenessBlockDiagonal} again. Thus, the product $\bcirc(\mathcal{A})\cdot \bcirc(\mathcal{B})\cdot\bcirc(\mathcal{A})$ is Hermitian positive definite \citep[Observation 7.1.8]{horn2012matrix} as well.
		\item [(\romannumeral2)] By Lemma \ref{LemmaPositiveDefinitenessBlockDiagonal}, $\bcirc(\mathcal{A})$ is Hermitian positive definite and thus invertible. Hence, $\mathcal{A}$ is invertible by Lemma \ref{LemmaInvertibility}. Moreover, $\bcirc(\mathcal{A}^{-1})=\bcirc(\mathcal{A})^{-1}$ is also Hermitian positive definite \citep[p.430]{horn2012matrix}. We conclude that $\mathcal{A}^{-1} \in \mathbb{H}_{++}^{n \times n \times p}$ by applying Lemma \ref{LemmaPositiveDefinitenessBlockDiagonal} again. 
		\item [(\romannumeral3)] We have $\bcirc((\mathcal{A}^{\frac{1}{k}})^{-1})=(\bcirc(\mathcal{A}^{\frac{1}{k}}))^{-1}=(\bcirc(\mathcal{A})^{\frac{1}{k}})^{-1}=(\bcirc(\mathcal{A})^{-1})^{\frac{1}{k}}=(\bcirc(\mathcal{A}^{-1}))^{\frac{1}{k}}=\bcirc((\mathcal{A}^{-1})^{\frac{1}{k}})$.
        \item [(\romannumeral4)] Let $k$ be a positive integer, and we prove that $\mathcal{A}^k \in  \mathbb{H}_{++}^{n \times n \times p}$ at first. By Lemma \ref{LemmaPositiveDefinitenessBlockDiagonal}, it suffices to show that $\bcirc(\mathcal{A}^k)$ is a Hermitian positive definite matrix. By Lemma \ref{Lemma:BasicPropertiesTproduct}.(\romannumeral1), we have $\bcirc(\mathcal{A}^k)=\bcirc(\mathcal{A})^k$. Since $\bcirc(\mathcal{A})$ is Hermitian positive definite, then its power $\bcirc(\mathcal{A})^k$ is also Hermitian positive definite \citep[Corollary 7.2.2]{horn2012matrix}. Thus, $\mathcal{A}^k \in  \mathbb{H}_{++}^{n \times n \times p}$. Combining it with Lemma \ref{LemmaKthRoot} and (\romannumeral3), we have that  $\mathcal{A}^q \in \mathbb{H}_{++}^{n \times n \times p}$ for any  $q \in \Q$. 
        
        The real power of a Hermitian positive definite matrix is well-defined, and it does not break the block circulant shape. Thus, we can extend $\mathcal{A}^q~(q \in \Q)$ to real exponents $r \in \mathbb{R}$ by constructing a convergent sequence $\{q_i\}$ from rational approximations $q_i$ to $r$.
	\end{itemize}
\end{proof}

Now we are ready to define the geometric mean of two T-positive definite tensors. We use the same symbol $\#$ for the geometric mean for T-positive definite tensors by the abuse of notation. 
 
\begin{definition}
	For $\mathcal{A},\mathcal{B} \in \mathbb{H}_{++}^{n \times n \times p}$, the geometric mean $\mathcal{A} \# \mathcal{B}$ of $\mathcal{A}$ and $\mathcal{B}$ is defined as
	\begin{equation*}
		\mathcal{A} \# \mathcal{B}=\mathcal{A}^{\frac{1}{2}}*(\mathcal{A}^{-\frac{1}{2}}*\mathcal{B}*\mathcal{A}^{-\frac{1}{2}})^{\frac{1}{2}}*\mathcal{A}^{\frac{1}{2}}.
	\end{equation*}
\end{definition}

 For any $\mathcal{A},\mathcal{B} \in \mathbb{H}_{++}^{n \times n \times p}$, their geometric mean $\mathcal{A} \# \mathcal{B}$ is a well-defined T-positive definite tensor by Lemma \ref{LemmaBinaryOper}, that is, the geometric mean $\#$ gives a binary operator on $\mathbb{H}_{++}^{n \times n \times p}$. Furthermore, we have $\bcirc(\mathcal{A} \# \mathcal{B})=\bcirc(\mathcal{A})^{\frac{1}{2}}(\bcirc(\mathcal{A})^{-\frac{1}{2}}\bcirc(\mathcal{B})\bcirc(\mathcal{A})^{-\frac{1}{2}})^{\frac{1}{2}}\bcirc(\mathcal{A})^{\frac{1}{2}}=\bcirc(\mathcal{A})\# \bcirc(\mathcal{B})$ by Lemma \ref{Lemma:BasicPropertiesTproduct}.(\romannumeral1). We immediately have the following proposition.

\begin{proposition}\label{PropBcircGeometricMean}
Let $\mathcal{A},\mathcal{B} \in \mathbb{H}_{++}^{n \times n \times p}$. Then
	\begin{equation*}
			\mathcal{A}\# \mathcal{B}=\bcirc^{-1}(\bcirc(\mathcal{A})\#\bcirc(\mathcal{B})).
    \end{equation*}
\end{proposition}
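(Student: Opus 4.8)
The plan is to verify the displayed identity by peeling off the outermost $\bcirc$ from both sides and checking that the two block circulant matrices agree. Since $\bcirc$ is an injective operator (it has a well-defined inverse $\bcirc^{-1}$ by definition), the equation $\mathcal{A}\#\mathcal{B} = \bcirc^{-1}(\bcirc(\mathcal{A})\#\bcirc(\mathcal{B}))$ is equivalent to $\bcirc(\mathcal{A}\#\mathcal{B}) = \bcirc(\mathcal{A})\#\bcirc(\mathcal{B})$, where the right-hand side is the ordinary matrix geometric mean \eqref{eqMatrixGeometricMean} of the two Hermitian positive definite matrices $\bcirc(\mathcal{A})$ and $\bcirc(\mathcal{B})$. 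So everything reduces to computing $\bcirc(\mathcal{A}\#\mathcal{B})$ explicitly.

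First I would recall that $\bcirc$ is multiplicative with respect to the T-product by Lemma \ref{Lemma:BasicPropertiesTproduct}.(\romannumeral1), and that it commutes with taking real powers of T-positive definite tensors: for any $r\in\R$ and $\mathcal{C}\in\mathbb{H}_{++}^{n\times n\times p}$ one has $\bcirc(\mathcal{C}^r) = \bcirc(\mathcal{C})^r$. The case $r=1/2$ and $r=-1/2$ of this is exactly what was established inside the proof of Lemma \ref{LemmaBinaryOper}, parts (\romannumeral3) and (\romannumeral4) (combined with the uniqueness of the positive definite square root and the fact that $\bcirc(\mathcal{C})$ is Hermitian positive definite by Lemma \ref{LemmaPositiveDefinitenessBlockDiagonal}). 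With these two facts in hand, I would simply apply $\bcirc$ to the definition of $\mathcal{A}\#\mathcal{B}$ and push it through the product:
\begin{equation*}
\bcirc(\mathcal{A}\#\mathcal{B}) = \bcirc(\mathcal{A})^{\frac{1}{2}}\bigl(\bcirc(\mathcal{A})^{-\frac{1}{2}}\bcirc(\mathcal{B})\bcirc(\mathcal{A})^{-\frac{1}{2}}\bigr)^{\frac{1}{2}}\bcirc(\mathcal{A})^{\frac{1}{2}},
\end{equation*}
using Lemma \ref{Lemma:BasicPropertiesTproduct}.(\romannumeral1) to convert each $*$ into matrix multiplication and the power-compatibility of $\bcirc$ for the exponents $\pm\frac12$ (the inner $\frac12$-power is applied to the block circulant matrix $\bcirc(\mathcal{A}^{-\frac12}*\mathcal{B}*\mathcal{A}^{-\frac12})$, which by Lemma \ref{LemmaBinaryOper}.(\romannumeral1)-type reasoning is Hermitian positive definite, so its square root is the ordinary one). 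The right-hand side is by definition $\bcirc(\mathcal{A})\#\bcirc(\mathcal{B})$, which completes the argument after applying $\bcirc^{-1}$. In fact this computation is already spelled out in the paragraph immediately preceding the proposition, so the ``proof'' is essentially an observation.

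There is no real obstacle here; the only point requiring a moment's care is to justify that $\bcirc$ genuinely commutes with the fractional powers appearing in the definition of $\#$ — i.e. that the square root of the Hermitian positive definite matrix $\bcirc(\mathcal{C})$ is again block circulant and equals $\bcirc(\mathcal{C}^{1/2})$ — but this is precisely the content used in Lemma \ref{LemmaBinaryOper} (uniqueness of the positive definite $k$-th root from Lemma \ref{LemmaKthRoot}, together with the fact that real powers preserve the block circulant structure, as remarked in the proof of Lemma \ref{LemmaBinaryOper}.(\romannumeral4)). Once that is granted, the proposition is immediate from Lemma \ref{Lemma:BasicPropertiesTproduct}.(\romannumeral1) and the injectivity of $\bcirc$.
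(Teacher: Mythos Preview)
Your proposal is correct and matches the paper's approach exactly: the paper does not give a separate proof but simply notes, in the paragraph immediately preceding the proposition, that $\bcirc(\mathcal{A}\#\mathcal{B})=\bcirc(\mathcal{A})\#\bcirc(\mathcal{B})$ by Lemma~\ref{Lemma:BasicPropertiesTproduct}.(\romannumeral1), and then states the proposition as an immediate consequence. You even observed this yourself.
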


\smallskip
Lawson and Lim \citep{lawson2001geometric} showed that the geometric mean for positive definite matrices satisfy the properties which ``mean" usually follows: the identity property (or idempotence) $A \# A=A$, the inversion property $(A\# B)^{-1}=A^{-1}\# B^{-1}$, the commutative property $A \# B=B \# A$, and the transformation property $(C^HAC)\#(C^HBC)=C^H(A\# B)C$, for $A,B \in \mathcal{P}_N$ and $C \in \mathcal{H}_N$. The geometric mean for T-positive definite tensors also satisfies such properties.

\begin{theorem}\label{MainThm1}
	Let $\mathcal{A},\mathcal{B} \in \mathbb{H}_{++}^{n \times n \times p}$.
	\begin{itemize}			
		\item [(\romannumeral1)] (The Identity Property)
			\begin{equation*}
				\mathcal{A}\#\mathcal{A}=\mathcal{A}.
			\end{equation*} 
		\item [(\romannumeral2)] (The Inversion Property)
			\begin{equation*}
				(\mathcal{A}\#\mathcal{B})^{-1}=\mathcal{A}^{-1} \# \mathcal{B}^{-1}.
			\end{equation*} 
		\item [(\romannumeral3)] (The Commutative Property)
			\begin{equation*}
				\mathcal{A}\#\mathcal{B}=\mathcal{B}\#\mathcal{A}.
			\end{equation*} 
		\item [(\romannumeral4)] (The Transformation Property) When $\Gamma_{\mathcal{C}}$ is defined by
  $\Gamma_{\mathcal{C}}(\mathcal{A})=\mathcal{C}^H * \mathcal{A} * \mathcal{C}$ for invertible $\mathcal{C} \in \C^{n \times n \times p}$,	
			\begin{equation*}
				\Gamma_{\mathcal{C}}(\mathcal{A}) \# \Gamma_{\mathcal{C}}(\mathcal{B})=\Gamma_{\mathcal{C}}(\mathcal{A} \# \mathcal{B}).
			\end{equation*}
	\end{itemize}
\end{theorem}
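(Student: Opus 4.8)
The plan is to reduce everything to the corresponding facts about Hermitian positive definite matrices via Proposition \ref{PropBcircGeometricMean}, which identifies $\mathcal{A}\#\mathcal{B}$ with $\bcirc^{-1}(\bcirc(\mathcal{A})\#\bcirc(\mathcal{B}))$. The key observation is that $\bcirc$ is an injective $*$-homomorphism from frontal square tensors (under T-product) to block circulant matrices (under ordinary matrix multiplication), compatible with conjugate transpose (Lemma \ref{LemmaHermitian}), with inverses (Lemma \ref{LemmaInvertibility}), with roots (Lemma \ref{LemmaBinaryOper}), and with positive definiteness (Lemma \ref{LemmaPositiveDefinitenessBlockDiagonal}). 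Therefore each of the four identities for tensors is the image under $\bcirc^{-1}$ of the already-known matrix identity of Lawson and Lim \citep{lawson2001geometric}, applied to $\bcirc(\mathcal{A})$ and $\bcirc(\mathcal{B})$ (and $\bcirc(\mathcal{C})$).

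Concretely, I would argue as follows. For (\romannumeral1), $\bcirc(\mathcal{A})\#\bcirc(\mathcal{A})=\bcirc(\mathcal{A})$ by the matrix idempotence, hence $\mathcal{A}\#\mathcal{A}=\bcirc^{-1}(\bcirc(\mathcal{A}))=\mathcal{A}$. For (\romannumeral2), using $\bcirc(\mathcal{A}^{-1})=\bcirc(\mathcal{A})^{-1}$ (Lemma \ref{LemmaBinaryOper}.(\romannumeral2)) together with the matrix inversion property $(\bcirc(\mathcal{A})\#\bcirc(\mathcal{B}))^{-1}=\bcirc(\mathcal{A})^{-1}\#\bcirc(\mathcal{B})^{-1}$, and the fact that $\bcirc$ commutes with taking inverses, I get $(\mathcal{A}\#\mathcal{B})^{-1}=\bcirc^{-1}\big(\bcirc(\mathcal{A})^{-1}\#\bcirc(\mathcal{B})^{-1}\big)=\bcirc^{-1}\big(\bcirc(\mathcal{A}^{-1})\#\bcirc(\mathcal{B}^{-1})\big)=\mathcal{A}^{-1}\#\mathcal{B}^{-1}$. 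For (\romannumeral3), the matrix commutativity $\bcirc(\mathcal{A})\#\bcirc(\mathcal{B})=\bcirc(\mathcal{B})\#\bcirc(\mathcal{A})$ immediately gives $\mathcal{A}\#\mathcal{B}=\mathcal{B}\#\mathcal{A}$. For (\romannumeral4), I first note that $\Gamma_{\mathcal{C}}(\mathcal{A})\in\mathbb{H}_{++}^{n\times n\times p}$ when $\mathcal{C}$ is invertible (argue via $\bcirc(\mathcal{C}^H*\mathcal{A}*\mathcal{C})=\bcirc(\mathcal{C})^H\bcirc(\mathcal{A})\bcirc(\mathcal{C})$ using Lemmas \ref{Lemma:BasicPropertiesTproduct}, \ref{LemmaHermitian}, \ref{LemmaPositiveDefinitenessBlockDiagonal}), so that both sides of the claimed identity are legitimate geometric means; then I apply the matrix transformation property $(\bcirc(\mathcal{C})^H\bcirc(\mathcal{A})\bcirc(\mathcal{C}))\#(\bcirc(\mathcal{C})^H\bcirc(\mathcal{B})\bcirc(\mathcal{C}))=\bcirc(\mathcal{C})^H(\bcirc(\mathcal{A})\#\bcirc(\mathcal{B}))\bcirc(\mathcal{C})$ and translate back through $\bcirc^{-1}$.

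The one point that requires a little care — and is the closest thing to an obstacle — is making sure $\bcirc$ genuinely intertwines all the constructions appearing in the definition of $\#$, in particular the non-integer powers and the fact that inverse, square root, and the various products all stay within the class of block circulant matrices so that $\bcirc^{-1}$ can be applied at the end. All of this has effectively been recorded already: Lemma \ref{Lemma:BasicPropertiesTproduct}.(\romannumeral1) for products, Lemma \ref{LemmaBinaryOper}.(\romannumeral2)--(\romannumeral4) for inverses and real powers (a product of block circulant matrices is block circulant, an inverse of a block circulant matrix is block circulant, and the proof of Lemma \ref{LemmaBinaryOper}.(\romannumeral4) observes that real powers of a Hermitian positive definite block circulant matrix remain block circulant), and Lemma \ref{LemmaHermitian} for the conjugate transpose. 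So the proof is essentially a bookkeeping argument: write $\bcirc$ of the left-hand side, push it through each operation until it becomes the matrix geometric mean expression, invoke the corresponding Lawson--Lim identity, and push back. I would present all four items in this uniform way rather than expanding the definition of $\#$ coordinatewise, since the block-circulant reduction is both shorter and more transparent.
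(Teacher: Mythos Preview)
Your proposal is correct and follows essentially the same approach as the paper: both arguments invoke Proposition~\ref{PropBcircGeometricMean} to transport each identity to the block circulant matricizations and then appeal to the corresponding Lawson--Lim property for Hermitian positive definite matrices. Your write-up is more detailed about the bookkeeping (that $\bcirc$ intertwines products, inverses, conjugate transposes, and real powers), but the strategy is identical.
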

\begin{proof} 
The block circulant matricization gives each property for Hermitian positive definite matrices, which holds as in \citep{lawson2001geometric}. By Proposition \ref{PropBcircGeometricMean}, the properties for T-positive definite tensors are true.
\end{proof}

\smallskip

Lawson and Lim also showed in \citep{lawson2001geometric} that the geometric mean of positive definite matrices $A,B$ is the unique positive definite solution of the algebraic Riccati matrix equation $XA^{-1}X=B$. We observe that a similar statement holds for third-order tensors.

\begin{proposition}\label{PropRiccati}
    Let $\mathcal{A},\mathcal{B} \in \mathbb{H}_{++}^{n \times n \times p}$.  Then $\mathcal{A}\# \mathcal{B}$ is the unique T-positive definite solution of the Riccati tensor equation
			\begin{equation}\label{eqRiccati}
				\mathcal{X}*\mathcal{A}^{-1}*\mathcal{X}=\mathcal{B}.
			\end{equation}	
\end{proposition}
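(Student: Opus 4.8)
The plan is to push everything through the block circulant matricization, exactly as in the proofs of Theorem \ref{MainThm1} and Proposition \ref{PropBcircGeometricMean}. By Lemma \ref{Lemma:BasicPropertiesTproduct}.(\romannumeral1), for any $\mathcal{X} \in \C^{n \times n \times p}$ one has $\bcirc(\mathcal{X}*\mathcal{A}^{-1}*\mathcal{X}) = \bcirc(\mathcal{X})\cdot\bcirc(\mathcal{A})^{-1}\cdot\bcirc(\mathcal{X})$, using also that $\bcirc(\mathcal{A}^{-1}) = \bcirc(\mathcal{A})^{-1}$; since $\bcirc$ is injective, $\mathcal{X}$ solves \eqref{eqRiccati} if and only if $\bcirc(\mathcal{X})$ solves the matrix Riccati equation $X\,\bcirc(\mathcal{A})^{-1}X = \bcirc(\mathcal{B})$. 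Moreover, by Lemma \ref{LemmaPositiveDefinitenessBlockDiagonal}, $\mathcal{X}$ is T-positive definite iff $\bcirc(\mathcal{X})$ is Hermitian positive definite, and as $\mathcal{X}$ ranges over $\mathbb{H}_{++}^{n \times n \times p}$ its image $\bcirc(\mathcal{X})$ ranges over precisely the block circulant Hermitian positive definite matrices. So the proposition reduces to the matrix statement together with the observation that the matrix solution happens to be block circulant.

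For the matrix fact I would either cite Lawson--Lim \citep{lawson2001geometric} directly or give the short self-contained argument: writing $A = \bcirc(\mathcal{A})$, $B = \bcirc(\mathcal{B})$, the substitution $Y = A^{-1/2}XA^{-1/2}$ sets up a bijection between Hermitian positive definite $X$ and Hermitian positive definite $Y$, under which $XA^{-1}X = B$ becomes $A^{1/2}Y^2A^{1/2} = B$, i.e. $Y^2 = A^{-1/2}BA^{-1/2}$. Since a Hermitian positive definite matrix has a unique Hermitian positive definite square root, this forces $Y = (A^{-1/2}BA^{-1/2})^{1/2}$, hence $X = A^{1/2}(A^{-1/2}BA^{-1/2})^{1/2}A^{1/2} = A \# B$ is the one and only Hermitian positive definite solution (and conversely this $X$ does solve the equation, by reversing the computation). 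Then I invoke Proposition \ref{PropBcircGeometricMean} (equivalently the displayed identity preceding it), which gives $\bcirc(\mathcal{A})\#\bcirc(\mathcal{B}) = \bcirc(\mathcal{A}\#\mathcal{B})$: in particular the unique matrix solution is block circulant and is the matricization of the tensor $\mathcal{A}\#\mathcal{B}$, which lies in $\mathbb{H}_{++}^{n \times n \times p}$ by Lemma \ref{LemmaBinaryOper}.

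Putting the pieces together gives both directions. Existence: $\mathcal{A}\#\mathcal{B} \in \mathbb{H}_{++}^{n \times n \times p}$ and $\bcirc(\mathcal{A}\#\mathcal{B}) = \bcirc(\mathcal{A})\#\bcirc(\mathcal{B})$ solves the matrix Riccati equation, so $\mathcal{A}\#\mathcal{B}$ solves \eqref{eqRiccati}. Uniqueness: if $\mathcal{X} \in \mathbb{H}_{++}^{n \times n \times p}$ solves \eqref{eqRiccati}, then $\bcirc(\mathcal{X})$ is Hermitian positive definite and solves $X\,\bcirc(\mathcal{A})^{-1}X = \bcirc(\mathcal{B})$, hence $\bcirc(\mathcal{X}) = \bcirc(\mathcal{A})\#\bcirc(\mathcal{B}) = \bcirc(\mathcal{A}\#\mathcal{B})$, and injectivity of $\bcirc$ yields $\mathcal{X} = \mathcal{A}\#\mathcal{B}$.

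I do not expect a genuine obstacle; the only point that merits a line of care is the logical order of ``uniqueness among all Hermitian positive definite matrices'' versus ``uniqueness among the block circulant ones.'' What we need for tensors is the latter, which is implied by the (a priori stronger) former, so nothing is lost; equivalently, since the unique matrix solution $\bcirc(\mathcal{A})\#\bcirc(\mathcal{B})$ turns out to be block circulant, no non-circulant solutions can interfere. The genuinely nontrivial content is entirely the classical matrix Riccati identity, which the $\bcirc$-dictionary transports verbatim.
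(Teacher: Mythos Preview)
Your proof is correct. The paper takes a slightly different route: rather than pushing the equation through $\bcirc$ to the matrix Riccati equation and invoking the classical result there, it argues directly at the tensor level. Existence is declared straightforward, and for uniqueness the paper takes two T-positive definite solutions $\mathcal{X},\mathcal{Y}$, computes
\[
(\mathcal{A}^{-\frac{1}{2}}*\mathcal{X}*\mathcal{A}^{-\frac{1}{2}})^2
=\mathcal{A}^{-\frac{1}{2}}*\mathcal{B}*\mathcal{A}^{-\frac{1}{2}}
=(\mathcal{A}^{-\frac{1}{2}}*\mathcal{Y}*\mathcal{A}^{-\frac{1}{2}})^2,
\]
and then appeals to Lemma \ref{LemmaKthRoot} (uniqueness of the T-positive definite square root) and Lemma \ref{LemmaBinaryOper} to conclude $\mathcal{X}=\mathcal{Y}$. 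This is of course the same substitution $Y=A^{-1/2}XA^{-1/2}$ you use, just carried out in $\mathbb{H}_{++}^{n\times n\times p}$ instead of in $\mathcal{P}_{np}$. Your $\bcirc$-transport approach has the virtue of making the later Theorem \ref{ThmGeometricmean} feel like an immediate continuation (block-diagonalize and solve $p$ matrix Riccati equations), while the paper's direct tensor argument avoids the detour through the ambient matrix space and the small bookkeeping point about block-circulant versus general Hermitian solutions that you flagged.
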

\begin{proof}
    It is straightforward that $\mathcal{A} \# \mathcal{B}$ is a T-positive definite solution of the equation (\ref{eqRiccati}). For proving the uniqueness, let $\mathcal X, \mathcal Y \in \mathbb{H}_{++}^{n \times n \times p}$ be two T-positive definite solutions so that $\mathcal{X}*\mathcal{A}^{-1}*\mathcal{X}=\mathcal{B}=\mathcal{Y}*\mathcal{A}^{-1}*\mathcal{Y}$. First note that 
		\begin{align*}
			(\mathcal{A}^{-\frac{1}{2}}*\mathcal{X}*\mathcal{A}^{-\frac{1}{2}})^2 &= (\mathcal{A}^{-\frac{1}{2}}*\mathcal{X}*\mathcal{A}^{-\frac{1}{2}})*(\mathcal{A}^{-\frac{1}{2}}*\mathcal{X}*\mathcal{A}^{-\frac{1}{2}})\\
			&=\mathcal{A}^{-\frac{1}{2}}*\mathcal{X}*\mathcal{A}^{-1}*\mathcal{X}*\mathcal{A}^{-\frac{1}{2}}\\
			&=\mathcal{A}^{-\frac{1}{2}}*\mathcal{Y}*\mathcal{A}^{-1}*\mathcal{Y}*\mathcal{A}^{-\frac{1}{2}}\\
			&=(\mathcal{A}^{-\frac{1}{2}}*\mathcal{Y}*\mathcal{A}^{-\frac{1}{2}})^2.
		\end{align*}
  			By Lemma \ref{LemmaBinaryOper}.(\romannumeral2)-(\romannumeral3), both $\mathcal{A}^{-\frac{1}{2}}*\mathcal{X}*\mathcal{A}^{-\frac{1}{2}},  \mathcal{A}^{-\frac{1}{2}}*\mathcal{Y}*\mathcal{A}^{-\frac{1}{2}} \in \mathbb{H}^{n \times n \times p}_{++}$ are T-positive definite. By Lemma \ref{LemmaBinaryOper}.(\romannumeral1), their squares $(\mathcal{A}^{-\frac{1}{2}}*\mathcal{X}*\mathcal{A}^{-\frac{1}{2}})^2=(\mathcal{A}^{-\frac{1}{2}}*\mathcal{Y}*\mathcal{A}^{-\frac{1}{2}})^2 \in \mathbb{H}^{n \times n \times p}_{++}$ coincide, and the resulting tensor is also T-positive definite. By the uniqueness of the $k$th root as in Lemma \ref{LemmaKthRoot}, we conclude that 
		\begin{equation*}
			\mathcal{A}^{-\frac{1}{2}}*\mathcal{X}*\mathcal{A}^{-\frac{1}{2}}=\mathcal{A}^{-\frac{1}{2}}*\mathcal{Y}*\mathcal{A}^{-\frac{1}{2}}
		\end{equation*}
		and thus $\mathcal{X}=\mathcal{Y}$.
\end{proof}

The above proposition gives a motivation of algebraic Riccati tensor equation which is of the form
	\begin{equation*}		\mathcal{X}*\mathcal{A}*\mathcal{X}+\mathcal{B}*\mathcal{X}+\mathcal{X}*\mathcal{C}+\mathcal{D}=\mathcal{O}, 
    \end{equation*}
	where $\mathcal{A},\mathcal{B},\mathcal{C},\mathcal{D}\in \C^{n \times n \times p}$ are coefficients and $\mathcal{X} \in \C^{n \times n \times p}$ is a variable.
	Such tensor equations have not been studied very much, however, this algebraic Riccati tensor equation is equivalent to the algebraic Riccati matrix equation with block circulant matrix coefficients and variable:
	\begin{equation*}
		\bcirc(\mathcal{X})\cdot \bcirc(\mathcal{A}) \cdot \bcirc(\mathcal{X})+\bcirc(\mathcal{B})\cdot\bcirc(\mathcal{X})+\bcirc(\mathcal{X})\cdot \bcirc(\mathcal{C})+\bcirc(\mathcal{D})=O,
	\end{equation*}
	where $O:=\bcirc(\mathcal{O})$ is a zero matrix. 
	This observation leads to the following interpretation of the geometric mean as a solution of the system of algebraic Riccati matrix equations. 
 
 \begin{theorem}\label{ThmGeometricmean}
    Let $\mathcal{A}=\left[A^{(1)}\bpp \cdots\bpp A^{(p)}\right],~\mathcal{B}=\left[B^{(1)}\bpp \cdots\bpp B^{(p)}\right] \in \mathbb{H}_{++}^{n \times n \times p}$.
      Then the geometric mean of $\mathcal{A}$ and $\mathcal{B}$ is explicitly expressed as 
    \begin{equation*}
        \mathcal{A} \# \mathcal{B}=\bcirc^{-1}\left((\mathbf{F}_p^H \otimes I_n)\cdot\diag(A_1 \# B_1,...,A_p\# B_p)\cdot(\mathbf{F}_p \otimes I_n)\right),
    \end{equation*}
where for each $i=1,...,p$,
    \begin{align*}
     A_i = \sum\limits_{k=1}^{p} \omega^{(i-1)(k-1)}A^{(k)}~~\text{and}~~B_i = \sum\limits_{k=1}^{p} \omega^{(i-1)(k-1)}B^{(k)}.
    \end{align*}
 \end{theorem}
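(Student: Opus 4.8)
The plan is to reduce the claim to the block-diagonalization of $\bcirc(\mathcal{A})$ and $\bcirc(\mathcal{B})$ furnished by Lemma \ref{LemmaFourierBlockDiag}, together with two elementary facts about the matrix geometric mean \eqref{eqMatrixGeometricMean}: it commutes with unitary congruence, and it commutes with block-diagonal decomposition.

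First I would invoke Proposition \ref{PropBcircGeometricMean} to write $\mathcal{A} \# \mathcal{B} = \bcirc^{-1}(\bcirc(\mathcal{A}) \# \bcirc(\mathcal{B}))$, so it suffices to identify $\bcirc(\mathcal{A}) \# \bcirc(\mathcal{B})$ with $(\mathbf{F}_p^H \otimes I_n)\,\diag(A_1 \# B_1,\ldots,A_p \# B_p)\,(\mathbf{F}_p \otimes I_n)$ and then apply $\bcirc^{-1}$. Set $Q := \mathbf{F}_p \otimes I_n$, $D_{\mathcal{A}} := \diag(A_1,\ldots,A_p)$, and $D_{\mathcal{B}} := \diag(B_1,\ldots,B_p)$. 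Since $\mathbf{F}_p$ is unitary, so is $Q$, and Lemma \ref{LemmaFourierBlockDiag} gives $\bcirc(\mathcal{A}) = Q^H D_{\mathcal{A}} Q$ and $\bcirc(\mathcal{B}) = Q^H D_{\mathcal{B}} Q$, with the $A_i,B_i$ exactly as in the statement (this identification of the blocks is the remark following Lemma \ref{LemmaFourierBlockDiag}, using $m=n$). By Lemma \ref{LemmaPositiveDefinitenessBlockDiagonal}.(\romannumeral3), each $A_i$ and $B_i$ is Hermitian positive definite, so every $A_i \# B_i$ is well-defined.

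Next I would verify the two commutation facts. For unitary congruence: the matrix $Q^H D_{\mathcal{A}}^{1/2} Q$ is Hermitian positive definite and squares to $Q^H D_{\mathcal{A}} Q$, so by uniqueness of the positive definite square root $(Q^H D_{\mathcal{A}} Q)^{\pm 1/2} = Q^H D_{\mathcal{A}}^{\pm 1/2} Q$; substituting this into \eqref{eqMatrixGeometricMean} and cancelling the interior factors $QQ^H = I$ gives $\bcirc(\mathcal{A}) \# \bcirc(\mathcal{B}) = Q^H (D_{\mathcal{A}} \# D_{\mathcal{B}}) Q$ (alternatively, this is the congruence-invariance of the matrix geometric mean, the matrix form of Theorem \ref{MainThm1}.(\romannumeral4)). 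For block-diagonal decomposition: powers and inverses of a block-diagonal Hermitian positive definite matrix act blockwise, so $D_{\mathcal{A}}^{\pm 1/2} = \diag(A_i^{\pm 1/2})$, and feeding this into \eqref{eqMatrixGeometricMean} yields $D_{\mathcal{A}} \# D_{\mathcal{B}} = \diag(A_1 \# B_1,\ldots,A_p \# B_p)$.

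Combining the two displays gives $\bcirc(\mathcal{A}) \# \bcirc(\mathcal{B}) = (\mathbf{F}_p^H \otimes I_n)\,\diag(A_1 \# B_1,\ldots,A_p \# B_p)\,(\mathbf{F}_p \otimes I_n)$, and applying $\bcirc^{-1}$ completes the proof. I do not expect a genuine obstacle; the only points needing care are the bookkeeping that $Q$ is unitary so that the $Q^H$-conjugation passes through the square-root and inverse operations intact, and checking that the $A_i$ in the target formula are the same diagonal blocks as in Lemma \ref{LemmaFourierBlockDiag}. Everything else is routine functional calculus for Hermitian positive definite matrices.
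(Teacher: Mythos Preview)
Your proof is correct, but it takes a different route from the paper's own argument. You go directly through Proposition \ref{PropBcircGeometricMean} and then invoke congruence invariance of the matrix geometric mean (the matrix form of Theorem \ref{MainThm1}.(\romannumeral4)) together with the blockwise behavior of $\#$ on block-diagonal matrices. The paper instead appeals to Proposition \ref{PropRiccati}: it argues that the claimed expression is the unique Hermitian positive definite solution of the block-circulant Riccati equation $\bcirc(\mathcal{X})\cdot\bcirc(\mathcal{A})^{-1}\cdot\bcirc(\mathcal{X})=\bcirc(\mathcal{B})$, block-diagonalizes this equation via Lemma \ref{LemmaFourierBlockDiag} into the system $X_iA_i^{-1}X_i=B_i$ for $i=1,\ldots,p$, and identifies $X_i=A_i\#B_i$ as the unique positive definite solution of each. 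Your approach is shorter and more direct, relying only on functional-calculus facts about $\#$; the paper's approach is slightly longer but ties the formula explicitly to the Riccati-equation theme developed just before the theorem, making the point that the tensor Riccati equation decouples into $p$ matrix Riccati equations.
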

 \begin{proof}
      By Proposition \ref{PropRiccati}, it suffices to show that $(\mathbf{F}_p^H \otimes I_n)\cdot\diag(A_1 \# B_1,...,A_p\# B_p)\cdot(\mathbf{F}_p \otimes I_n)$ is the uniquely determined Hermitian positive definite solution of the equation
	\begin{equation}\label{eqTargetRiccati}
		\bcirc(\mathcal{X}) \cdot \bcirc(\mathcal{A})^{-1}\cdot \bcirc(\mathcal{X})=\bcirc(\mathcal{B}).
	\end{equation}
	Since all block circulant matrices are simultaneously block diagonalizable by Lemma \ref{LemmaFourierBlockDiag}, we can solve \eqref{eqTargetRiccati} by solving several algebraic Riccati matrix equations via the block diagonalization as follows. 
 
    Let $X_1,...,X_p$ be the matrices complying with $\bcirc(\mathcal{X})=(\mathbf{F}_p^H \otimes I_m)\cdot \operatorname{diag}(X_1,...,X_p)\cdot (\mathbf{F}_p \otimes I_n)$, then the equation \eqref{eqTargetRiccati} can be rewritten as: 

	\begin{equation}\label{eqAssociatedMatrixEquation}
		\begin{bmatrix}
			X_1 & & &\\
			& X_2 & &\\
			& & \ddots &\\
			& &  & X_p
		\end{bmatrix}\begin{bmatrix}
			A_1^{-1} & & &\\
			& A_2^{-1} & &\\
			& & \ddots &\\
			& &  & A_p^{-1}
		\end{bmatrix}\begin{bmatrix}
			X_1 & & &\\
			& X_2 & &\\
			& & \ddots &\\
			& &  & X_p
		\end{bmatrix}=\begin{bmatrix}
			B_1 & & &\\
			& B_2 & &\\
			& & \ddots &\\
			& &  & B_p
		\end{bmatrix}.
	\end{equation}
	Note that the equation \eqref{eqAssociatedMatrixEquation} is equivalent to the following system of $p$ algebraic Riccati matrix equations:
	\begin{equation}\label{eqSystem}
		\begin{cases}
			X_1A_1^{-1}X_1=B_1,\\
			X_2A_2^{-1}X_2=B_2,\\
			~~~~~~~~~~\vdots\\
			X_pA_p^{-1}X_p=B_p.\\
		\end{cases}
	\end{equation}
	The blocks $A_i,B_i$ are Hermitian and positive definite for all $i=1,\ldots,p$ by Lemma \ref{LemmaPositiveDefinitenessBlockDiagonal}. Thus, 
    \begin{equation*}
        \left[
        \begin{array}{cccc}
            \hspace{-0.2cm}A_1 \# B_1 & & &\\
            & \hspace{-0.4cm}A_2 \# B_2 & &\\
            & & \hspace{-0.4cm}\ddots & \\
            & & & \hspace{-0.4cm}A_p \# B_p\hspace{-0.2cm}
        \end{array}
        \right]
    \end{equation*}
 is the uniquely determined Hermitian positive definite solution of \eqref{eqAssociatedMatrixEquation}, and so the assertion holds.
 \end{proof}

\smallskip
\begin{example}
    Consider the tensors $\mathcal{A}=\left[A^{(1)}\bpp A^{(2)}\right], ~\mathcal{B}=\left[B^{(1)}\bpp B^{(2)}\right]\in \mathbb{R}^{3\times3\times2}$ defined by
    \begin{align*}
        &A^{(1)} = 
        \begin{bmatrix}
            \hphantom{+}6 & \hphantom{+}1 & \hphantom{+}2\\
            \hphantom{+}1 & \hphantom{+}8 & \hphantom{+}3\\
            \hphantom{+}2 & \hphantom{+}3 &10
        \end{bmatrix}, \quad
        A^{(2)} = 
        \begin{bmatrix}
           \hphantom{+}4 & \hphantom{+}1 & \hphantom{+}2\\
          \hphantom{+}1 & \hphantom{+}6 & \hphantom{+}4\\
          \hphantom{+}2 & \hphantom{+}4 & \hphantom{+}2\\ 
        \end{bmatrix},\\
        &B^{(1)} = 
        \begin{bmatrix}
            \hphantom{+}8   & -3  &  -3\\
      -3   &  \hphantom{+}6  &   \hphantom{+}1\\
      -3   &  \hphantom{+}1  &   \hphantom{+}8
        \end{bmatrix}, \quad
        B^{(2)} = 
        \begin{bmatrix}
            -6   &  \hphantom{+}2  &   \hphantom{+}5\\
       \hphantom{+}2  &  -2  &  -3\\
       \hphantom{+}5  &  -3  &  -2
        \end{bmatrix}.
    \end{align*}
    Then $\mathcal{A},\mathcal{B}$ are Hermitian positive definite tensors and their geometric mean $\mathcal{X}=[X^{(1)}\bpp X^{(2)}] := \mathcal{A} \# \mathcal{B}$ is as follows:
    \begin{align*}
        & X^{(1)} \approx 
        \begin{bmatrix}
            \hphantom{+}4.5916 &  -0.6057  &  \hphantom{+}0.1536\\
      -0.6057  &  \hphantom{+}5.1580  &  \hphantom{+}0.4850\\
       \hphantom{+}0.1536  &  \hphantom{+}0.4850  &  \hphantom{+}7.4309
        \end{bmatrix}, \quad 
         X^{(2)} \approx 
        \begin{bmatrix}
           -0.4400  &  \hphantom{+}0.3644  &  \hphantom{+}2.2243\\
       \hphantom{+}0.3644  &  \hphantom{+}1.4536  &  \hphantom{+}0.0987\\
       \hphantom{+}2.2243  &  \hphantom{+}0.0987 &  -0.0154
        \end{bmatrix}.
    \end{align*}
\end{example}
The example was numerically calculated by MATLAB R2023b.

\bigskip
\subsection{T-L\"owner order}\label{SubSect:Lowner Order}

Recall that the L\"owner order on the set of all Hermitian matrices is defined as $A<B$ (resp. $A \leq B$) if $B-A$ is positive (resp. semi-)definite. We can naturally generalize the L\"owner order on $\mathbb{H}^{n \times n \times p}$, and call it T-L\"owner order.

\begin{definition}
	For $\mathcal{A}, \mathcal{B} \in \mathbb{H}^{n \times n \times p}$, the T-L\"owner order $<$ (resp. $\leq$) is defined by
	\begin{equation*}
		\mathcal{A} < \mathcal{B}~ (\text{resp. }\mathcal{A} \leq \mathcal{B})~~ \text{if}~~ \mathcal{B} -\mathcal{A} \in \mathbb{H}_{++}^{n \times n \times p} ~(\text{resp. }\mathcal{B} -\mathcal{A} \in \mathbb{H}_{+}^{n \times n \times p}).
	\end{equation*} 
\end{definition}

We will not distinguish the notation $<$ and $\leq$ for L\"owner and T-L\"owner orders unless there is ambiguousness.  
By Lemma \ref{LemmaPositiveDefinitenessBlockDiagonal}, the following property holds:

\begin{proposition}\label{PropBcircLownerInequal}
    Let $\mathcal{A},\mathcal{B} \in \mathbb{H}^{n \times n \times p}$. Then $\mathcal{A} \leq \mathcal{B}$ if and only if $ \bcirc(\mathcal{A}) \leq \bcirc(\mathcal{B})$.
\end{proposition}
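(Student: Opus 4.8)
The plan is to reduce the statement directly to the definition of the T-L\"owner order together with Lemma \ref{LemmaPositiveDefinitenessBlockDiagonal}. The key observation is that the map $\bcirc$ is $\C$-linear on tensors, so $\bcirc(\mathcal{B}-\mathcal{A}) = \bcirc(\mathcal{B}) - \bcirc(\mathcal{A})$; this turns a statement about a difference of tensors into the corresponding statement about a difference of their block circulant matricizations.

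First I would unwind the left-hand side: by definition, $\mathcal{A} \leq \mathcal{B}$ means $\mathcal{B} - \mathcal{A} \in \mathbb{H}_{+}^{n \times n \times p}$, i.e.\ $\mathcal{B}-\mathcal{A}$ is T-Hermitian and T-positive semidefinite. By Lemma \ref{LemmaPositiveDefinitenessBlockDiagonal}, this is equivalent to $\bcirc(\mathcal{B}-\mathcal{A})$ being Hermitian and positive semidefinite. Next I would apply linearity of $\bcirc$ to rewrite $\bcirc(\mathcal{B}-\mathcal{A}) = \bcirc(\mathcal{B}) - \bcirc(\mathcal{A})$, so the condition becomes: $\bcirc(\mathcal{B}) - \bcirc(\mathcal{A})$ is Hermitian positive semidefinite. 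Finally, by the definition of the (ordinary) L\"owner order on Hermitian matrices — noting that $\bcirc(\mathcal{A})$ and $\bcirc(\mathcal{B})$ are themselves Hermitian by Lemma \ref{LemmaHermitian} since $\mathcal{A},\mathcal{B} \in \mathbb{H}^{n \times n \times p}$ — this is exactly the assertion $\bcirc(\mathcal{A}) \leq \bcirc(\mathcal{B})$. Reading the chain of equivalences backwards gives the converse. The strict version $\mathcal{A} < \mathcal{B} \iff \bcirc(\mathcal{A}) < \bcirc(\mathcal{B})$ follows verbatim by replacing ``semidefinite'' with ``definite'' and $\mathbb{H}_+$ with $\mathbb{H}_{++}$ throughout.

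There is essentially no obstacle here: the proposition is a routine transfer along the ring homomorphism $\bcirc$, and every ingredient (linearity of $\bcirc$, Lemma \ref{LemmaHermitian}, Lemma \ref{LemmaPositiveDefinitenessBlockDiagonal}) is already available in the excerpt. The only point worth stating carefully is that $\bcirc$ is additive — which is immediate from the explicit block-circulant formula, since each frontal slice of $\mathcal{B}-\mathcal{A}$ is $B^{(k)}-A^{(k)}$ and the block circulant construction is entrywise linear in the slices. I would phrase the proof in one or two sentences along these lines, citing Lemma \ref{LemmaPositiveDefinitenessBlockDiagonal} as the workhorse.
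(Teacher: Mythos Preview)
Your proposal is correct and matches the paper's approach exactly: the paper simply states that the proposition holds by Lemma~\ref{LemmaPositiveDefinitenessBlockDiagonal}, and your argument is just the careful unpacking of that one-line justification via the linearity of $\bcirc$ and the definitions of the two L\"owner orders.
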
  

\begin{proposition}\label{PropLoewner}
	Let $\mathcal{A}, \mathcal{B},\mathcal{A}',\mathcal{B}' \in \mathbb{H}_{++}^{n \times n \times p}$ and $\mathcal{C} \in \mathbb{H}^{n \times n \times p}$.
	\begin{itemize}
		\item [(\romannumeral1)] If $\mathcal{A}<\mathcal{B}$, then $\mathcal{B}^{-1}<\mathcal{A}^{-1}$.
		\item [(\romannumeral2)] (The harmonic-geometric-arithmetic mean inequality)
			\begin{equation*}
				2(\mathcal{A}^{-1}+\mathcal{B}^{-1})^{-1} \leq \mathcal{A} \# \mathcal{B} \leq \frac{1}{2}(\mathcal{A}+\mathcal{B}).
			\end{equation*}
		\item [(\romannumeral3)] (The monotone property) If $\mathcal{A}' \leq \mathcal{A}$ and $\mathcal{B}' \leq \mathcal{B}$, then $\mathcal{A}' \# \mathcal{B}' \leq \mathcal{A} \# \mathcal{B}$.
		\item [(\romannumeral4)] (The L\"owner-Heinz inequality) If $\mathcal{C}^2 \leq \mathcal{A} \leq \mathcal{B}$, then $\mathcal{C} \leq \mathcal{A}^{\frac{1}{2}} \leq \mathcal{B}^{\frac{1}{2}}$.
	\end{itemize}
\end{proposition}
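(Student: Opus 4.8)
The plan is to reduce every assertion to the corresponding classical fact about Hermitian positive definite matrices via Lemma~\ref{Lemma:BasicPropertiesTproduct}.(\romannumeral1), Proposition~\ref{PropBcircLownerInequal}, and Proposition~\ref{PropBcircGeometricMean}. The key observation is that $\bcirc$ is an injective $*$-algebra homomorphism: it sends T-products to matrix products, preserves the Hermitian property (Lemma~\ref{LemmaHermitian}), sends the geometric mean of tensors to the geometric mean of the block circulant matricizations (Proposition~\ref{PropBcircGeometricMean}), and turns the T-L\"owner order into the ordinary L\"owner order (Proposition~\ref{PropBcircLownerInequal}). Since $k$th roots, inverses, and real powers of a T-positive definite tensor are computed block-circulant-wise (Lemma~\ref{LemmaBinaryOper}.(\romannumeral3), and the computations in its proof), every expression appearing in the statement is matched under $\bcirc$ by the same expression built from $\bcirc(\mathcal{A}),\bcirc(\mathcal{B}),\bcirc(\mathcal{A}'),\bcirc(\mathcal{B}'),\bcirc(\mathcal{C})$.

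First I would record the general principle: for $\mathcal{M},\mathcal{N}\in\mathbb{H}^{n\times n\times p}$ one has $\mathcal{M}\le\mathcal{N}$ iff $\bcirc(\mathcal{M})\le\bcirc(\mathcal{N})$, and $\bcirc$ commutes with all the operations in sight. Then (\romannumeral1) follows from the classical fact that $A<B$ implies $B^{-1}<A^{-1}$ for Hermitian positive definite matrices, applied to $\bcirc(\mathcal{A})<\bcirc(\mathcal{B})$, together with $\bcirc(\mathcal{A}^{-1})=\bcirc(\mathcal{A})^{-1}$. For (\romannumeral2), applying $\bcirc$ turns the claimed chain into $2(\bcirc(\mathcal{A})^{-1}+\bcirc(\mathcal{B})^{-1})^{-1}\le\bcirc(\mathcal{A})\#\bcirc(\mathcal{B})\le\tfrac12(\bcirc(\mathcal{A})+\bcirc(\mathcal{B}))$, which is the classical harmonic--geometric--arithmetic mean inequality for positive definite matrices (see, e.g., \citep{lawson2001geometric} or \citep{bhatia2006riemannian}); here one also uses that $\bcirc$ is linear so it respects the sums $\mathcal{A}+\mathcal{B}$ and $\mathcal{A}^{-1}+\mathcal{B}^{-1}$. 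For (\romannumeral3), from $\mathcal{A}'\le\mathcal{A}$, $\mathcal{B}'\le\mathcal{B}$ we get $\bcirc(\mathcal{A}')\le\bcirc(\mathcal{A})$, $\bcirc(\mathcal{B}')\le\bcirc(\mathcal{B})$, and the monotonicity of the matrix geometric mean gives $\bcirc(\mathcal{A}')\#\bcirc(\mathcal{B}')\le\bcirc(\mathcal{A})\#\bcirc(\mathcal{B})$; pulling back through $\bcirc^{-1}$ via Proposition~\ref{PropBcircGeometricMean} yields $\mathcal{A}'\#\mathcal{B}'\le\mathcal{A}\#\mathcal{B}$. For (\romannumeral4), the hypothesis $\mathcal{C}^2\le\mathcal{A}\le\mathcal{B}$ becomes $\bcirc(\mathcal{C})^2\le\bcirc(\mathcal{A})\le\bcirc(\mathcal{B})$ (using $\bcirc(\mathcal{C}^2)=\bcirc(\mathcal{C})^2$), and the L\"owner--Heinz inequality for matrices gives $\bcirc(\mathcal{C})\le\bcirc(\mathcal{A})^{1/2}\le\bcirc(\mathcal{B})^{1/2}$; since $\bcirc(\mathcal{A}^{1/2})=\bcirc(\mathcal{A})^{1/2}$ and likewise for $\mathcal{B}$, translating back gives the claim.

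The only mild subtlety, and the place I would be most careful, is bookkeeping: each item must be phrased so that the operation inside matches a legitimate classical statement, and one should make sure the classical hypotheses transfer — e.g. in (\romannumeral1) strictness of the inequality, in (\romannumeral4) that $\bcirc(\mathcal{C})$ is already known Hermitian (so $\bcirc(\mathcal{C})^2$ is positive semidefinite and its positive square root is $\bcirc(\mathcal{C})$ itself only after noting $\mathcal{C}\le\mathcal{A}^{1/2}$ does not require $\mathcal{C}$ positive, merely Hermitian, which is exactly the classical L\"owner--Heinz setting). There is no genuine analytic obstacle here; the entire content is the dictionary ``$\bcirc$ is an order-preserving $*$-isomorphism onto the algebra of block circulant matrices'', and each of (\romannumeral1)--(\romannumeral4) is the image of a well-known matrix inequality under this dictionary. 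The proof is therefore short: state the dictionary once, then invoke \citep{lawson2001geometric} (and standard references such as \citep{bhatia2006riemannian, horn2012matrix}) for each matrix inequality.

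\begin{proof}
By Lemma~\ref{LemmaHermitian}, Lemma~\ref{Lemma:BasicPropertiesTproduct}.(\romannumeral1), Lemma~\ref{LemmaBinaryOper}, Proposition~\ref{PropBcircGeometricMean}, and Proposition~\ref{PropBcircLownerInequal}, the map $\bcirc$ is an injective, linear, order-preserving map from $\mathbb{H}^{n\times n\times p}$ onto the set of block circulant Hermitian matrices which intertwines T-product with matrix multiplication, the $k$th root with the matrix $k$th root, the inverse with the matrix inverse, real powers with matrix real powers, and the geometric mean $\#$ with the matrix geometric mean. Hence each of (\romannumeral1)--(\romannumeral4) is equivalent, after applying $\bcirc$, to the corresponding statement for the Hermitian positive definite matrices $\bcirc(\mathcal{A}),\bcirc(\mathcal{B}),\bcirc(\mathcal{A}'),\bcirc(\mathcal{B}'),\bcirc(\mathcal{C})$:
\begin{itemize}
\item [(\romannumeral1)] is the monotonicity of matrix inversion;
\item [(\romannumeral2)] is the harmonic--geometric--arithmetic mean inequality for positive definite matrices \citep{lawson2001geometric, bhatia2006riemannian};
\item [(\romannumeral3)] is the monotonicity of the matrix geometric mean \citep{lawson2001geometric};
\item [(\romannumeral4)] is the L\"owner--Heinz inequality \citep{horn2012matrix}.
\end{itemize}
In each case the classical hypotheses transfer verbatim under $\bcirc$, and pulling the conclusion back through $\bcirc^{-1}$ gives the assertion for T-positive definite tensors.
\end{proof}
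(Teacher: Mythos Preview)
Your proof is correct and follows essentially the same approach as the paper: apply $\bcirc$ to reduce each inequality to the corresponding classical matrix inequality (as in \citep{lawson2001geometric}), then pull back via Propositions~\ref{PropBcircGeometricMean} and~\ref{PropBcircLownerInequal}. The paper's version is terser, but your explicit articulation of the ``dictionary'' (that $\bcirc$ intertwines all the relevant operations and orders) is exactly the underlying mechanism.
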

\begin{proof}
    The block circulant matricization gives each inequality for Hermitian positive definite matrices, which holds as in \citep{lawson2001geometric}. By Proposition \ref{PropBcircGeometricMean} and \ref{PropBcircLownerInequal}, the inequalities for T-positive definite tensors are true.
\end{proof}

\begin{proposition}\label{PropHermitianPD}
    Let $\mathcal{A}, \mathcal{B} \in \mathbb{H}_{++}^{n \times n \times p}$.
    \begin{itemize}
        \item [(\romannumeral1)] $\bcirc(\mathcal{A}\#\mathcal{B})$ is the largest matrix of all $np \times np$ Hermitian matrices $X$ for which the Hermitian and block matrix
			\begin{equation}\label{eqHermitianBlockPositive}
				\begin{bmatrix}
					\bcirc(\mathcal{A}) & X\\
					X & \bcirc(\mathcal{B})
				\end{bmatrix}
			\end{equation}
			is positive semi-definite.
		\item [(\romannumeral2)] For $\mathcal{X} \in \mathbb{H}^{n \times n \times p}$, $\mathcal{X}*\mathcal{A}^{-1}*\mathcal{X} \leq \mathcal{B}$ if and only if $\mathcal{X} \leq \mathcal{A}\# \mathcal{B}$.
    \end{itemize}
\end{proposition}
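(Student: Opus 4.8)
The plan is to reduce everything, as in the rest of the paper, to the block circulant matricization and then invoke the corresponding matrix facts, which are classical results of Ando and of Lawson--Lim. For part (i), apply $\bcirc$ to the statement: by Proposition \ref{PropBcircGeometricMean} we have $\bcirc(\mathcal{A}\#\mathcal{B})=\bcirc(\mathcal{A})\#\bcirc(\mathcal{B})$, and by Lemma \ref{LemmaPositiveDefinitenessBlockDiagonal} the matrices $\bcirc(\mathcal{A}),\bcirc(\mathcal{B})$ are Hermitian positive definite. The classical fact (see Ando \citep{ando1978topics}, or Bhatia's treatment of the geometric mean) is that for Hermitian positive definite $P,Q$, the matrix $P\#Q$ is precisely the maximum, in the L\"owner order, of all Hermitian $X$ making $\begin{bmatrix}P & X\\ X & Q\end{bmatrix}$ positive semi-definite. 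So (i) is just this matrix statement applied to $P=\bcirc(\mathcal{A})$, $Q=\bcirc(\mathcal{B})$. The only thing worth a sentence is to note that the class of $X$ appearing in \eqref{eqHermitianBlockPositive} is allowed to range over \emph{all} $np\times np$ Hermitian matrices, not merely block circulant ones, so the maximality assertion is genuinely a statement about $np\times np$ matrices; but since $\bcirc(\mathcal{A}\#\mathcal{B})$ is itself block circulant and is already the maximum over the larger class, there is nothing to check.

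For part (ii), I would first handle the matricized version and then translate back. Given $\mathcal{X}\in\mathbb{H}^{n\times n\times p}$, set $P=\bcirc(\mathcal{A})$, $X=\bcirc(\mathcal{X})$, $Q=\bcirc(\mathcal{B})$, all Hermitian with $P$ positive definite. By Proposition \ref{PropBcircLownerInequal}, the tensor inequality $\mathcal{X}*\mathcal{A}^{-1}*\mathcal{X}\leq\mathcal{B}$ is equivalent to $XP^{-1}X\leq Q$ (using Lemma \ref{Lemma:BasicPropertiesTproduct}.(i) to turn the T-product into matrix multiplication and Lemma \ref{LemmaBinaryOper}.(ii)/ Lemma \ref{LemmaInvertibility} to identify $\bcirc(\mathcal{A}^{-1})=\bcirc(\mathcal{A})^{-1}$), while $\mathcal{X}\leq\mathcal{A}\#\mathcal{B}$ is equivalent to $X\leq P\#Q$. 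Now $XP^{-1}X\leq Q$ is, via the Schur complement criterion for positive semi-definiteness of a $2\times2$ block Hermitian matrix, exactly the condition that $\begin{bmatrix}P & X\\ X & Q\end{bmatrix}\succeq 0$ — here one uses that $P\succ 0$ so the Schur complement $Q-XP^{-1}X$ governs semi-definiteness. By part (i), this holds if and only if $X\leq P\#Q$. Chaining the equivalences gives (ii).

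The one subtlety to be careful about is that the Schur complement characterization requires $X$ to be Hermitian (so that the $2\times2$ block matrix is Hermitian), which is why the hypothesis $\mathcal{X}\in\mathbb{H}^{n\times n\times p}$ is there, and it requires $P$ strictly positive definite rather than merely positive semi-definite, which is guaranteed by $\mathcal{A}\in\mathbb{H}_{++}^{n\times n\times p}$ via Lemma \ref{LemmaPositiveDefinitenessBlockDiagonal}. I do not expect any real obstacle: both parts are ``apply $\bcirc$, cite the matrix theorem, apply $\bcirc^{-1}$.'' If anything needs spelling out it is the equivalence $XP^{-1}X\leq Q \iff \begin{bmatrix}P & X\\ X & Q\end{bmatrix}\succeq 0$, but that is the standard Schur complement lemma \citep[Theorem 7.7.7]{horn2012matrix} and can simply be cited. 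Accordingly the write-up will be short: state that matricizing reduces each claim to its matrix analogue, cite Ando/Lawson--Lim for (i) and the Schur complement lemma together with (i) for (ii), and invoke Propositions \ref{PropBcircGeometricMean} and \ref{PropBcircLownerInequal} to pass between tensors and block circulant matrices.
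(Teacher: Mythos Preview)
Your proposal is correct and follows essentially the same route as the paper: matricize via $\bcirc$, invoke the matrix result (the paper cites \citep[Theorem 3.4 and Corollary 3.5]{lawson2001geometric}), and translate back using Propositions \ref{PropBcircGeometricMean} and \ref{PropBcircLownerInequal}. The only cosmetic difference is that for (ii) you spell out the equivalence $XP^{-1}X\leq Q \iff \begin{bmatrix}P&X\\X&Q\end{bmatrix}\succeq 0$ via Schur complement and then appeal to part (i), whereas the paper cites the packaged matrix statement directly; these amount to the same argument.
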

\begin{proof}
    By Lemma \ref{LemmaPositiveDefinitenessBlockDiagonal}, $\bcirc(\mathcal{A})$ and $\bcirc(\mathcal{B})$ are  Hermitian positive definite.
    \begin{itemize}
        \item [(\romannumeral1)] By \citep[Theorem 3.4]{lawson2001geometric}, the largest matrix of $n \times n$ Hermitian matrices $X$ for which \eqref{eqHermitianBlockPositive} is positive semi-definite is $\bcirc(\mathcal{A})\#\bcirc(\mathcal{B})$. In addition, we have $\bcirc(\mathcal{A})\#\bcirc(\mathcal{B})=\bcirc(\mathcal{A}\#\mathcal{B})$ by Proposition \ref{PropBcircGeometricMean}.
        \item [(\romannumeral2)] For $\mathcal{X} \in \mathbb{H}^{n \times n \times p} $, we have that $\bcirc(\mathcal{X})$ is Hermitian positive definite by Lemma \ref{LemmaPositiveDefinitenessBlockDiagonal}, and so
        \begin{align*}
            \mathcal{X}*\mathcal{A}^{-1}*\mathcal{X} \leq \mathcal{B} &\Leftrightarrow \bcirc(\mathcal{X})\cdot\bcirc(\mathcal{A})^{-1}\cdot\bcirc(\mathcal{X}) \leq \bcirc(\mathcal{B})~(\because \text{Proposition \ref{PropBcircLownerInequal}})\\
            &\Leftrightarrow \bcirc(\mathcal{X}) \leq \bcirc(\mathcal{A}) \# \bcirc(\mathcal{B})~(\because \text{\citep[Corollary 3.5]{lawson2001geometric}} ) \\
            &\Leftrightarrow \bcirc(\mathcal{X}) \leq \bcirc(\mathcal{A} \# \mathcal{B})~(\because \text{Proposition \ref{PropBcircGeometricMean}} ) \\
            &\Leftrightarrow \mathcal{X} \leq \mathcal{A}\# \mathcal{B}~(\because \text{Proposition \ref{PropBcircLownerInequal}} ).
        \end{align*}
    \end{itemize}
\end{proof}

\bigskip
\section{Riemannian geometry}\label{Sect:Riemannian Geometry}

Let ${\mathcal P}_N$ be the convex cone of $N \times N$ Hermitian positive definite matrices, and let $P$ be a point on ${\mathcal P}_N$. Then the tangent space $T_P\mathcal{P}_N$ of $\mathcal{P}_N$ at $P$ is the Euclidean space ${\mathcal H}_N$ of $N \times N$ Hermitian matrices. In \citep{bhatia2006riemannian, moakher2005differential}, a Riemannian manifold associated with the geometric mean for Hermitian positive definite matrices is introduced as the trace metric $\tr (P^{-1}XP^{-1}Y),$ for $X,Y \in T_P\mathcal{P}_N=\mathcal{H}_N$. Then the Riemannian distance $\delta$ with respect to the above metric is given by $\delta(A,B) = \| \log (A^{-1/2}BA^{-1/2}) \|_F$ for $A,B \in{\mathcal P}_N$. 
There are two important observations on this Riemannian manifold. First, the unique (up to parametrization) geodesic joining $A$ and $B$ is given by the curve of weighted geometric means defined as in \eqref{eqw-g-mean} so that the geometric mean $A \# B$ is the midpoint of this geodesic. Second, this Riemannian manifold is indeed a Cartan-Hadamard manifold. Similarly, we introduce a Riemannian manifold associated with the geometric mean for T-positive definite tensors and establish interesting results.

\bigskip
\subsection{Riemannian metric and geodesic}\label{SubSect:Riemannian Metric}

We give the topology induced by the Frobenius norm on $\mathbb{H}^{n \times n \times p}$. Then obviously $\mathbb{H}^{n \times n \times p}$ is a smooth manifold diffeomorphic to a Euclidean space. We have the following proposition which extends \citep{zheng2021t} over the base field $\C$.

\begin{proposition}\label{PropOpenConvexCone}
	$\mathbb{H}^{n \times n \times p}_{++}$ is a nonempty open convex cone on $\mathbb{H}^{n \times n \times p}$.
\end{proposition}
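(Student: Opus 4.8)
The plan is to transfer everything to the block circulant picture via Lemma \ref{Lemma:BasicPropertiesTproduct}.(\romannumeral1) and Lemma \ref{LemmaPositiveDefinitenessBlockDiagonal}, exactly as in the other proofs of this section, and reduce the three claims (nonempty, open, convex cone) to the classical fact that the set $\mathcal{P}_{np}$ of $np \times np$ Hermitian positive definite matrices is a nonempty open convex cone. The one subtlety is that $\bcirc$ does not map onto all of $\mathcal{P}_{np}$; its image is the subspace of block circulant Hermitian matrices. So the correct statement to use is that $\bcirc$ is a linear isomorphism from $\mathbb{H}^{n\times n\times p}$ onto the (closed) subspace $\mathcal{C} \subseteq \mathcal{H}_{np}$ of Hermitian block circulant matrices, and that under this isomorphism $\mathbb{H}^{n\times n\times p}_{++}$ corresponds to $\mathcal{C} \cap \mathcal{P}_{np}$, by Lemma \ref{LemmaPositiveDefinitenessBlockDiagonal}.

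First I would record that $\bcirc \colon \mathbb{H}^{n\times n\times p} \to \mathcal{C}$ is a linear homeomorphism: it is clearly linear and injective (it has the inverse $\bcirc^{-1}$), its image is $\mathcal{C}$ since the product structure shows block circulant Hermitian matrices are precisely the matricizations of T-Hermitian tensors, and both it and its inverse are continuous because they merely permute and copy entries (equivalently, $\|\bcirc(\mathcal{A})\|_F = \sqrt{p}\,\|\mathcal{A}\|$, as used in Proposition \ref{PropFroEig}). Next, nonemptiness is immediate: $\mathcal{I}_{n,p} \in \mathbb{H}^{n\times n\times p}_{++}$ since $\bcirc(\mathcal{I}_{n,p}) = I_{np}$ is Hermitian positive definite. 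For the cone property, if $\mathcal{A} \in \mathbb{H}^{n\times n\times p}_{++}$ and $t > 0$ then $\bcirc(t\mathcal{A}) = t\,\bcirc(\mathcal{A})$ is again Hermitian positive definite, so $t\mathcal{A} \in \mathbb{H}^{n\times n\times p}_{++}$; convexity follows the same way, since for $\mathcal{A},\mathcal{B} \in \mathbb{H}^{n\times n\times p}_{++}$ and $t\in[0,1]$ we have $\bcirc(t\mathcal{A}+(1-t)\mathcal{B}) = t\,\bcirc(\mathcal{A})+(1-t)\,\bcirc(\mathcal{B})$, a convex combination of Hermitian positive definite matrices, hence Hermitian positive definite, so $t\mathcal{A}+(1-t)\mathcal{B} \in \mathbb{H}^{n\times n\times p}_{++}$.

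The only point requiring a little care is openness, and this is where I would be most careful: $\mathbb{H}^{n\times n\times p}_{++}$ is open \emph{in $\mathbb{H}^{n\times n\times p}$}, not in $\C^{n\times n\times p}$, so one must not simply invoke openness of $\mathcal{P}_{np}$ in $\mathcal{H}_{np}$ and restrict. Instead, note $\mathbb{H}^{n\times n\times p}_{++} = \bcirc^{-1}\big(\mathcal{C}\cap\mathcal{P}_{np}\big)$ by Lemma \ref{LemmaPositiveDefinitenessBlockDiagonal}, and $\mathcal{C}\cap\mathcal{P}_{np}$ is open in the subspace $\mathcal{C}$ because $\mathcal{P}_{np}$ is open in $\mathcal{H}_{np}$ and $\mathcal{C}$ carries the subspace topology; since $\bcirc \colon \mathbb{H}^{n\times n\times p} \to \mathcal{C}$ is a homeomorphism, the preimage $\mathbb{H}^{n\times n\times p}_{++}$ is open in $\mathbb{H}^{n\times n\times p}$. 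Alternatively one can argue directly via the characterization (\romannumeral3) of Lemma \ref{LemmaPositiveDefinitenessBlockDiagonal}: the map $\mathcal{A}\mapsto(A_1,\dots,A_p)$ sending a T-Hermitian tensor to the Hermitian diagonal blocks of (\ref{eqFourierBlockDiagMN}) is a linear isomorphism onto $(\mathcal{H}_n)^p$, and T-positive definiteness corresponds to each $A_i$ lying in the open set $\mathcal{P}_n$, so $\mathbb{H}^{n\times n\times p}_{++}$ is the preimage of the open set $(\mathcal{P}_n)^p$ under a continuous map. Either route finishes the proof.
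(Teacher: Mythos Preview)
Your argument is correct. Note, however, that the paper does not actually supply a proof of this proposition: it is stated as an extension over $\C$ of a result already established in \citep{zheng2021t}, with no further details given. Your write-up is therefore considerably more explicit than anything in the paper itself, but it is entirely in the spirit of the paper's methods (reduction to the block circulant matricization via Lemmas \ref{Lemma:BasicPropertiesTproduct} and \ref{LemmaPositiveDefinitenessBlockDiagonal}), and both the main route through $\mathcal{C}\cap\mathcal{P}_{np}$ and the alternative via the Fourier block diagonalization are valid.
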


From Proposition \ref{PropOpenConvexCone}, we obtain that $\mathbb{H}^{n \times n \times p}_{++}$ is a smooth submanifold of $\mathbb{H}^{n \times n\times p}$. Now, we introduce a Riemannian metric on $\mathbb{H}^{n \times n \times p}_{++}$. Let $\mathcal{P} \in \mathbb{H}^{n \times n \times p}_{++}$. Since $\mathbb{H}^{n \times n \times p}_{++}$ is open in $\mathbb{H}^{n \times n \times p}$, the tangent space of $\mathbb{H}^{n \times n \times p}_{++}$ at $\mathcal{P}$ is nothing but 
\begin{equation*}
	T_{\mathcal{P}}\mathbb{H}^{n \times n \times p}_{++} =\mathbb{H}^{n \times n \times p}.
\end{equation*}
We define an inner product $g_{\mathcal{P}}( \cdot, \cdot )$ on each $T_{\mathcal{P}}\mathbb{H}^{n \times n \times p}_{++}$ by
\begin{equation}\label{eqNewInnerProduct}
	g_{\mathcal{P}}(\mathcal{X}, \mathcal{Y})=\tr(\mathcal{P}^{-1}*\mathcal{X}*\mathcal{P}^{-1}*\mathcal{Y})
\end{equation}
where $\mathcal{X},\mathcal{Y} \in T_{\mathcal{P}}\mathbb{H}^{n \times n \times p}_{++}$. 
These inner products defined throughout the smooth manifold $\mathbb{H}^{n \times n \times p}_{++}$ give us a Riemannian metric $g$ which is locally described by
\begin{equation}\label{eqds}
	ds=\sqrt{p}\cdot ||\mathcal{X}^{-\frac{1}{2}}*d\mathcal{X}*\mathcal{X}^{-\frac{1}{2}}||.
\end{equation} 
This is because
\begin{align*}
	ds^2&=\tr(\mathcal{X}^{-1}*d\mathcal{X}*\mathcal{X}^{-1}*d\mathcal{X})\\
	&=\tr(\mathcal{X}^{\frac{1}{2}}*\mathcal{X}^{-1}*d\mathcal{X}*\mathcal{X}^{-1}*d\mathcal{X}*\mathcal{X}^{-\frac{1}{2}})\\
	&=\tr(\mathcal{X}^{-\frac{1}{2}}*d\mathcal{X}*\mathcal{X}^{-\frac{1}{2}}*\mathcal{X}^{-\frac{1}{2}}*d\mathcal{X}*\mathcal{X}^{-\frac{1}{2}})\\
	&=\tr((\mathcal{X}^{-\frac{1}{2}}*d\mathcal{X}*\mathcal{X}^{-\frac{1}{2}})^2)\\
	&=(\text{sum of T-eigenvalues of}~(\mathcal{X}^{-\frac{1}{2}}*d\mathcal{X}*\mathcal{X}^{-\frac{1}{2}})^2)~(\because \text{Proposition \ref{PropTEigen}.(\romannumeral2)})\\
	&=(\text{sum of eigenvalues of}~(\bcirc(\mathcal{X}^{-\frac{1}{2}}*d\mathcal{X}*\mathcal{X}^{-\frac{1}{2}}))^2)~(\because \text{Definition \ref{DefTEigen}})\\
	&=(\text{sum of squares of eigenvalues of}~\bcirc(\mathcal{X}^{-\frac{1}{2}}*d\mathcal{X}*\mathcal{X}^{-\frac{1}{2}}))\\
	&=(\text{sum of squares of T-eigenvalues of}~\mathcal{X}^{-\frac{1}{2}}*d\mathcal{X}*\mathcal{X}^{-\frac{1}{2}})~(\because \text{Definition \ref{DefTEigen}})\\
	&=p\cdot ||\mathcal{X}^{-\frac{1}{2}}*d\mathcal{X}*\mathcal{X}^{-\frac{1}{2}}||^2~ (\because \text{Proposition \ref{PropFroEig}}).
\end{align*}

Now, we obtain a new Riemannian manifold $(\mathbb{H}^{n \times n \times p}_{++},g)$. By Definition \ref{DefTTrace}, the equation \eqref{eqNewInnerProduct} is equivalent to
\begin{equation}\label{eqIsometry}
	\begin{aligned}
	g_{\mathcal{P}}(\mathcal{X},\mathcal{Y})&=\tr\left(\bcirc(\mathcal{P})^{-1}\cdot \bcirc(\mathcal{X})\cdot \bcirc(\mathcal{P})^{-1}\cdot \bcirc(\mathcal{Y})\right)\\
	&=\hat{g}_{\bcirc(\mathcal{P})}(\bcirc(\mathcal{X}),\bcirc(\mathcal{Y})),
\end{aligned} 	
\end{equation}
where $\hat{g}$ denotes the Riemannian metric defined in \citep{bhatia2006riemannian}. We remark that \eqref{eqds} also can be achieved from \eqref{eqIsometry}. In addition, \eqref{eqIsometry} provides an important result in the aspect of the Riemannian submanifold of $\mathcal{P}_{np}$. Consider the block circulant matricizing map
\begin{equation*}
	\bcirc:(\mathbb{H}^{n\times n \times p}_{++}, g) \rightarrow (\mathcal{BCP}_{np}, ~\hat{g}|_{\mathcal{BCP}_{np}}) \subset (\mathcal{P}_{np}, \hat{g})
\end{equation*} 
defined by
\begin{equation}\label{eqbcircmap}
	\mathcal{P}=\left[P^{(1)}\Bpp P^{(2)}\Bpp \cdots \Bpp P^{(p)}\right] \mapsto \bcirc(\mathcal{P})=\begin{bmatrix}
			P^{(1)} & P^{(p)} & \cdots & P^{(2)}\\
			P^{(2)} & P^{(1)} &  \cdots & P^{(3)}\\
			\vdots & \vdots &  \ddots & \vdots\\
			P^{(p)} & P^{(p-1)}& \cdots & P^{(1)}
		\end{bmatrix},
\end{equation}	
where $\mathcal{BCP}_{np}$ denotes the set of block circulant $np \times np$ Hermitian positive definite  matrices. Considering \eqref{eqbcircmap}, the map $\bcirc$ is obviously smooth. Furthermore, it is an isometric embedding onto $\mathcal{BCP}_{np}$, by Lemma \ref{LemmaPositiveDefinitenessBlockDiagonal} and \eqref{eqIsometry}. Therefore, we have the following proposition.

\begin{proposition}
	$(\mathbb{H}^{n\times n \times p}_{++}, g)$ is an embedded submanifold of the Riemannian manifold $(\mathcal{P}_{np},\hat{g})$.
\end{proposition}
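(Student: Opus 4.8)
The plan is to verify the three standard conditions for an embedded submanifold of a Riemannian manifold, splitting the work into the differentiable structure and the metric structure. First I would observe that by Proposition \ref{PropOpenConvexCone}, $\mathbb{H}^{n\times n\times p}_{++}$ is an open subset of the vector space $\mathbb{H}^{n\times n\times p}$, so it is a smooth manifold of dimension $\dim_\R \mathbb{H}^{n\times n\times p}$, and that $\mathcal{BCP}_{np}$ is likewise an open subset of the real vector space of $np\times np$ block circulant Hermitian matrices, hence a smooth embedded submanifold of $\mathcal{P}_{np}$ of the same dimension (block circulant Hermitian matrices form a linear subspace of $\mathcal{H}_{np}$, and the block circulant ones that are positive definite form an open subset of it). The map $\bcirc$ in \eqref{eqbcircmap} is the restriction of the linear isomorphism $\mathbb{H}^{n\times n\times p}\to\{\text{block circulant Hermitian }np\times np\text{ matrices}\}$ to the open set $\mathbb{H}^{n\times n\times p}_{++}$; by Lemma \ref{LemmaPositiveDefinitenessBlockDiagonal} it maps this open set bijectively onto $\mathcal{BCP}_{np}$, so it is a diffeomorphism onto its image, and in particular a smooth embedding into $\mathcal{P}_{np}$.

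Next I would check that $\bcirc$ is isometric, i.e. that it pulls back $\hat g$ to $g$. This is exactly the content of \eqref{eqIsometry}: for $\mathcal{P}\in\mathbb{H}^{n\times n\times p}_{++}$ and tangent vectors $\mathcal{X},\mathcal{Y}\in T_{\mathcal P}\mathbb{H}^{n\times n\times p}_{++}=\mathbb{H}^{n\times n\times p}$, the differential $d\bcirc_{\mathcal P}$ is again $\bcirc$ (since $\bcirc$ is linear), its image lies in the tangent space $T_{\bcirc(\mathcal P)}\mathcal{P}_{np}=\mathcal{H}_{np}$ (indeed in the block circulant Hermitian subspace, which is the tangent space to $\mathcal{BCP}_{np}$), and $\hat g_{\bcirc(\mathcal P)}(\bcirc(\mathcal X),\bcirc(\mathcal Y))=\tr(\bcirc(\mathcal P)^{-1}\bcirc(\mathcal X)\bcirc(\mathcal P)^{-1}\bcirc(\mathcal Y))=g_{\mathcal P}(\mathcal X,\mathcal Y)$ using Lemma \ref{Lemma:BasicPropertiesTproduct}.(\romannumeral1) and Definition \ref{DefTTrace}. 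Hence $\bcirc$ is an isometric embedding of $(\mathbb{H}^{n\times n\times p}_{++},g)$ onto the Riemannian submanifold $(\mathcal{BCP}_{np},\hat g|_{\mathcal{BCP}_{np}})$ of $(\mathcal{P}_{np},\hat g)$, which is precisely the assertion.

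Honestly, there is no deep obstacle here — the statement is essentially a repackaging of \eqref{eqIsometry} together with the linearity of $\bcirc$ — so the proof is short. The only point that deserves a word of care is confirming that the tangent space identifications are the ones implicitly used: $T_{\mathcal P}\mathbb{H}^{n\times n\times p}_{++}=\mathbb{H}^{n\times n\times p}$ because the cone is open, and correspondingly $T_{\bcirc(\mathcal P)}\mathcal{BCP}_{np}$ is the space of block circulant Hermitian matrices, so that $d\bcirc_{\mathcal P}$ indeed lands in $T_{\bcirc(\mathcal P)}\mathcal{P}_{np}$ and the restriction of $\hat g$ makes sense. Once this bookkeeping is in place, the result follows immediately, so in the write-up I would simply cite Proposition \ref{PropOpenConvexCone}, Lemma \ref{LemmaPositiveDefinitenessBlockDiagonal}, and \eqref{eqIsometry}, and state that $\bcirc$ is a smooth isometric embedding, hence $(\mathbb{H}^{n\times n\times p}_{++},g)$ is an embedded submanifold of $(\mathcal{P}_{np},\hat g)$.
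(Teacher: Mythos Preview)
Your proposal is correct and follows essentially the same approach as the paper: the paper's argument (given in the paragraph preceding the proposition) simply notes that $\bcirc$ is obviously smooth from \eqref{eqbcircmap} and is an isometric embedding onto $\mathcal{BCP}_{np}$ by Lemma \ref{LemmaPositiveDefinitenessBlockDiagonal} and \eqref{eqIsometry}. You have merely filled in the bookkeeping details (linearity of $\bcirc$, the tangent space identifications, and why $\mathcal{BCP}_{np}$ is an embedded submanifold of $\mathcal{P}_{np}$) that the paper leaves implicit.
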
 

Let $\mathcal{A},\mathcal{B} \in \mathbb{H}^{n\times n \times p}_{++}$ and consider $\bcirc(\mathcal{A}),\bcirc(\mathcal{B}) \in \mathcal{BCP}_{np}$. As \eqref{eqw-g-mean}, the uniquely determined geodesic $\hat{\gamma}:[0,1] \rightarrow \mathcal{P}_{np}$ from $\bcirc(\mathcal{A})$ to $\bcirc(\mathcal{B})$ is 
	\begin{equation}\label{eqGeodesicSubmanifold}
		\hat{\gamma}(t)=\bcirc(\mathcal{A})^{\frac{1}{2}}\cdot (\bcirc(\mathcal{A})^{-\frac{1}{2}}\cdot \bcirc(\mathcal{B}) \cdot \bcirc(\mathcal{A})^{-\frac{1}{2}})^t \cdot \bcirc(\mathcal{A})^{\frac{1}{2}}.
	\end{equation}
	Here, the important is that $\hat{\gamma}(t) \in \mathcal{BCP}_{np }$ for all $t \in [0,1]$. That is, $(\mathbb{H}^{n\times n \times p}_{++}, g)$ is path-connected. Moreover, as an embedded submanifold of $(\mathcal{P}_{np},\hat{g})$, it is totally geodesic, i.e., a geodesic on $(\mathbb{H}^{n\times n \times p}_{++}, g)$ is also a geodesic of the ambient space $(\mathcal{P}_{np},\hat{g})$. In addition, by Proposition \ref{PropOpenConvexCone}, $(\mathbb{H}^{n\times n \times p}_{++}, g)$ is simply connected. Consequently, we have the following theorem.

\begin{theorem}\label{MainThm2}
	$(\mathbb{H}^{n\times n \times p}_{++}, g)$ is a path-connected and simply connected totally geodesic submanifold of the Riemannian manifold $(\mathcal{P}_{np },\hat{g})$. 
\end{theorem}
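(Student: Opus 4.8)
The plan is to transport everything through the isometric embedding $\bcirc\colon(\mathbb{H}^{n\times n\times p}_{++},g)\to(\mathcal{BCP}_{np},\hat{g}|_{\mathcal{BCP}_{np}})\subset(\mathcal{P}_{np},\hat{g})$ constructed above, so that it suffices to show that $\mathcal{BCP}_{np}$ is path-connected, simply connected, and totally geodesic inside the Cartan--Hadamard manifold $(\mathcal{P}_{np},\hat{g})$ of \citep{bhatia2006riemannian}. The one structural fact I would use repeatedly is a consequence of Lemma \ref{LemmaFourierBlockDiag}: conjugation by the unitary $\mathbf{F}_p\otimes I_n$ simultaneously block-diagonalizes all $np\times np$ block circulant matrices. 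Hence the block circulant matrices form a unital subalgebra of $\C^{np\times np}$ closed under matrix inversion and, applying any holomorphic functional calculus blockwise, closed under the matrix exponential and under real powers of its Hermitian positive definite members. Combined with the fact that $\bcirc$ intertwines the T-product with matrix multiplication (Lemma \ref{Lemma:BasicPropertiesTproduct}.(\romannumeral1)) and preserves positive definiteness (Lemma \ref{LemmaPositiveDefinitenessBlockDiagonal}), this lets me handle all three properties uniformly on the matrix side.

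For path-connectedness, given $\mathcal{A},\mathcal{B}\in\mathbb{H}^{n\times n\times p}_{++}$ I would take the curve $t\mapsto\mathcal{A}^{1/2}*(\mathcal{A}^{-1/2}*\mathcal{B}*\mathcal{A}^{-1/2})^{t}*\mathcal{A}^{1/2}$, which is well defined for $t\in[0,1]$ by Lemma \ref{LemmaBinaryOper}; under $\bcirc$ it maps precisely to the ambient geodesic $\hat{\gamma}$ of \eqref{eqGeodesicSubmanifold}, which therefore remains in $\mathcal{BCP}_{np}$ for every $t$ by the closure properties above, giving a path in $\mathbb{H}^{n\times n\times p}_{++}$ from $\mathcal{A}$ to $\mathcal{B}$. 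For the totally geodesic property, I would fix $C\in\mathcal{BCP}_{np}$, whose tangent space there is the real space of Hermitian block circulant matrices, and note that for such a $V$ the $(\mathcal{P}_{np},\hat{g})$-geodesic through $C$ with initial velocity $V$ is $t\mapsto C^{1/2}\exp(t\,C^{-1/2}VC^{-1/2})C^{1/2}$; since $C^{-1/2}VC^{-1/2}$ is again Hermitian block circulant and the matrix exponential preserves block circulance, this geodesic stays in $\mathcal{BCP}_{np}$. As every geodesic direction at every point of $\mathcal{BCP}_{np}$ thus extends to an ambient geodesic contained in $\mathcal{BCP}_{np}$, the submanifold $\mathcal{BCP}_{np}$ is totally geodesic in $(\mathcal{P}_{np},\hat{g})$; and because $\bcirc$ is an isometry onto $\mathcal{BCP}_{np}$ it carries geodesics to geodesics, so $(\mathbb{H}^{n\times n\times p}_{++},g)$ is totally geodesic in $(\mathcal{P}_{np},\hat{g})$ as well. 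Simple connectedness is then immediate: by Proposition \ref{PropOpenConvexCone}, $\mathbb{H}^{n\times n\times p}_{++}$ is an open convex cone, hence convex and therefore contractible, so $\pi_1$ is trivial.

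The step that I expect to require the most care is precisely the verification that the geodesic formula \eqref{eqGeodesicSubmanifold} and the exponential-map formula used in the totally geodesic argument do not escape $\mathcal{BCP}_{np}$ — everything hinges on the closure of block circulant matrices under the matrix exponential and under real powers of Hermitian positive definite members. This is exactly what Lemma \ref{LemmaFourierBlockDiag} delivers: conjugation by $\mathbf{F}_p\otimes I_n$ turns block circulant into block diagonal, any such functional calculus acts independently on each $n\times n$ diagonal block and preserves block diagonal form, and conjugating back restores block circulance. Once this observation is made precise, path-connectedness and the totally geodesic property follow directly, and simple connectedness is purely formal.
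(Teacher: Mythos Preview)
Your proposal is correct and follows essentially the same route as the paper: both argue on the matrix side via the isometric embedding $\bcirc$, use that the ambient geodesic \eqref{eqGeodesicSubmanifold} remains block circulant to get path-connectedness and the totally geodesic property, and invoke Proposition \ref{PropOpenConvexCone} for simple connectedness. Your write-up is in fact more careful than the paper's, which simply asserts that $\hat{\gamma}(t)\in\mathcal{BCP}_{np}$ and that this yields total geodesy; you spell out the closure of block circulant matrices under the functional calculus via Lemma \ref{LemmaFourierBlockDiag} and give the initial-velocity form of the geodesic, but the underlying argument is the same.
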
   

Since $\bcirc$ is an isometric embedding onto $\mathcal{BCP}_{np}$, we may consider an  isometric diffeomorphism
\begin{equation*}
	\bcirc^{-1}:(\mathcal{BCP}_{np},~\hat{g}|_{\mathcal{BCP}_{np}}) \rightarrow (\mathbb{H}^{n \times n \times p}_{++},g).
\end{equation*} 
For $\hat{\gamma}$ at \eqref{eqGeodesicSubmanifold}, $\gamma=\bcirc^{-1} \circ~\hat{\gamma}$ is the geodesic from $\mathcal{A}$ to $\mathcal{B}$ in $(\mathbb{H}^{n\times n \times p}_{++}, g)$, which is described (\ref{eqGeodesicTPD}) as below.

\begin{corollary}\label{MainCor1}
	For any $\mathcal{A},\mathcal{B} \in \mathbb{H}^{n \times n \times p}_{++}$, the geodesic $\gamma:[0,1] \rightarrow \mathbb{H}^{n \times n \times p}_{++}$ from $\mathcal{A}$ to $\mathcal{B}$ is uniquely determined (up to parametrization), and parametrized by
	\begin{equation}\label{eqGeodesicTPD}
		\gamma(t)=\mathcal{A}^{\frac{1}{2}}*(\mathcal{A}^{-\frac{1}{2}}*\mathcal{B}*\mathcal{A}^{-\frac{1}{2}})^t*\mathcal{A}^{\frac{1}{2}}.
	\end{equation}
\end{corollary}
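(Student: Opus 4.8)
The plan is to transport everything through the isometric diffeomorphism $\bcirc^{-1}\colon(\mathcal{BCP}_{np},\hat{g}|_{\mathcal{BCP}_{np}})\to(\mathbb{H}^{n\times n\times p}_{++},g)$ and to invoke the elementary fact that a Riemannian isometry carries geodesics to geodesics. By Theorem \ref{MainThm2}, $(\mathbb{H}^{n\times n\times p}_{++},g)$ is isometric via $\bcirc$ to $\mathcal{BCP}_{np}$, which is a totally geodesic submanifold of $(\mathcal{P}_{np},\hat{g})$; the latter is a uniquely geodesic (Cartan--Hadamard) manifold, with the geodesic joining $\bcirc(\mathcal{A})$ and $\bcirc(\mathcal{B})$ given by $\hat\gamma$ as in \eqref{eqGeodesicSubmanifold} \citep{bhatia2006riemannian}. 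Since $\hat\gamma(t)\in\mathcal{BCP}_{np}$ for every $t$ and $\mathcal{BCP}_{np}$ is totally geodesic, $\hat\gamma$ is also the unique geodesic of $\mathcal{BCP}_{np}$ between its endpoints: any submanifold geodesic joining them is a geodesic of $\mathcal{P}_{np}$, hence coincides with $\hat\gamma$. Pulling back by the isometry, $\gamma:=\bcirc^{-1}\circ\hat\gamma$ is then the unique geodesic of $(\mathbb{H}^{n\times n\times p}_{++},g)$ joining $\mathcal{A}$ and $\mathcal{B}$, unique up to reparametrization for exactly the same reason as in $\mathcal{P}_{np}$.

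It remains to identify $\gamma(t)$ with the right-hand side of \eqref{eqGeodesicTPD}. First I would note that $\mathcal{A}^{-\frac{1}{2}}*\mathcal{B}*\mathcal{A}^{-\frac{1}{2}}\in\mathbb{H}^{n\times n\times p}_{++}$ by Lemma \ref{LemmaBinaryOper}, so its real power $(\mathcal{A}^{-\frac{1}{2}}*\mathcal{B}*\mathcal{A}^{-\frac{1}{2}})^{t}$ is a well-defined T-positive definite tensor. Applying $\bcirc$, which is multiplicative by Lemma \ref{Lemma:BasicPropertiesTproduct}.(\romannumeral1) and commutes with real powers of T-positive definite tensors (the block-circulant shape is preserved under real powers of Hermitian positive definite matrices, as already used in the proof of Lemma \ref{LemmaBinaryOper} and in Proposition \ref{PropBcircGeometricMean}), I obtain
\begin{equation*}
\bcirc\!\left(\mathcal{A}^{\frac{1}{2}}*(\mathcal{A}^{-\frac{1}{2}}*\mathcal{B}*\mathcal{A}^{-\frac{1}{2}})^{t}*\mathcal{A}^{\frac{1}{2}}\right)=\bcirc(\mathcal{A})^{\frac{1}{2}}\bigl(\bcirc(\mathcal{A})^{-\frac{1}{2}}\bcirc(\mathcal{B})\bcirc(\mathcal{A})^{-\frac{1}{2}}\bigr)^{t}\bcirc(\mathcal{A})^{\frac{1}{2}}=\hat\gamma(t).
\end{equation*}
Since $\bcirc$ is injective, $\bcirc^{-1}(\hat\gamma(t))$ equals the tensor in the left-hand argument, which is precisely the claimed parametrization in \eqref{eqGeodesicTPD}; in particular this re-confirms that $\gamma(t)\in\mathbb{H}^{n\times n\times p}_{++}$ for all $t$.

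I expect the only genuine subtlety to be the uniqueness clause: one must be careful that ``the unique geodesic of the ambient happens to lie in the submanifold'' upgrades to ``the unique geodesic of the submanifold'', which uses both total geodesy (established just before Theorem \ref{MainThm2}) and the Cartan--Hadamard property of $(\mathcal{P}_{np},\hat{g})$. The computational identification of $\gamma(t)$ is then routine given Lemma \ref{Lemma:BasicPropertiesTproduct}.(\romannumeral1) and the compatibility of $\bcirc$ with real powers.
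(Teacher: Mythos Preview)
Your proposal is correct and follows essentially the same approach as the paper: define $\gamma=\bcirc^{-1}\circ\hat\gamma$ via the isometric diffeomorphism, then identify $\gamma(t)$ with the tensor expression in \eqref{eqGeodesicTPD} using the compatibility of $\bcirc$ with T-products and real powers. If anything, your treatment of the uniqueness clause (via total geodesy of $\mathcal{BCP}_{np}$ and the Cartan--Hadamard property of $(\mathcal{P}_{np},\hat g)$) is more explicit than the paper's, which simply records the corollary after establishing that $\hat\gamma$ stays in $\mathcal{BCP}_{np}$.
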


Note that the above geodesic (\ref{eqGeodesicTPD}) is well-defined by Lemma \ref{LemmaBinaryOper}. Consequently, we immediately observe that the geometric mean of $\mathcal A, B$ indicates the midpoint of this unique geodesic, as same as the geometric interpretation of the geometric mean of two positive definite matrices.

\begin{corollary}\label{MainCor2}
	For any $\mathcal{A},\mathcal{B} \in \mathbb{H}^{n \times n \times p}_{++}$, $\mathcal{A} \# \mathcal{B}$ is the midpoint of the geodesic from $\mathcal{A}$ to $\mathcal{B}$.
\end{corollary}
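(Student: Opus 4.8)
The plan is to read off the assertion from Corollary \ref{MainCor1} together with the definition of the geometric mean, once one has pinned down that ``midpoint'' here means the value of the geodesic at parameter $t=1/2$. So the first step is to recall that, by Corollary \ref{MainCor1}, the unique (up to parametrization) geodesic $\gamma:[0,1]\to\mathbb{H}^{n\times n\times p}_{++}$ joining $\mathcal{A}$ to $\mathcal{B}$ is given by \eqref{eqGeodesicTPD}, namely $\gamma(t)=\mathcal{A}^{\frac{1}{2}}*(\mathcal{A}^{-\frac{1}{2}}*\mathcal{B}*\mathcal{A}^{-\frac{1}{2}})^t*\mathcal{A}^{\frac{1}{2}}$, which is well-defined by Lemma \ref{LemmaBinaryOper}.

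Second, I would verify that this parametrization has constant speed, so that the midpoint of the geodesic segment --- the point equidistant from $\mathcal{A}$ and $\mathcal{B}$ along $\gamma$ --- is precisely $\gamma(1/2)$. This is inherited through the isometric embedding $\bcirc:(\mathbb{H}^{n\times n\times p}_{++},g)\to(\mathcal{P}_{np},\hat{g})$ constructed in Section \ref{SubSect:Riemannian Metric}: the image curve $\hat{\gamma}=\bcirc\circ\gamma$ is exactly the curve \eqref{eqGeodesicSubmanifold}, which is the standard constant-speed geodesic of $(\mathcal{P}_{np},\hat{g})$ between $\bcirc(\mathcal{A})$ and $\bcirc(\mathcal{B})$ in the sense of \citep{bhatia2006riemannian,moakher2005differential}, so that $\hat{\delta}(\hat{\gamma}(s),\hat{\gamma}(t))=|s-t|\,\hat{\delta}(\bcirc(\mathcal{A}),\bcirc(\mathcal{B}))$ for all $s,t\in[0,1]$. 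Since $\bcirc$ is an isometry onto $\mathcal{BCP}_{np}$, the same identity holds downstairs for $\gamma$; in particular the point $\gamma(1/2)$ satisfies $\delta(\mathcal{A},\gamma(1/2))=\delta(\gamma(1/2),\mathcal{B})=\tfrac12\,\delta(\mathcal{A},\mathcal{B})$, i.e.\ it is the midpoint.

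Third, I would simply evaluate \eqref{eqGeodesicTPD} at $t=1/2$, obtaining
\begin{equation*}
\gamma(1/2)=\mathcal{A}^{\frac{1}{2}}*(\mathcal{A}^{-\frac{1}{2}}*\mathcal{B}*\mathcal{A}^{-\frac{1}{2}})^{\frac{1}{2}}*\mathcal{A}^{\frac{1}{2}},
\end{equation*}
which is exactly the definition of $\mathcal{A}\#\mathcal{B}$. Combining the three steps yields $\mathcal{A}\#\mathcal{B}=\gamma(1/2)$, the midpoint of the geodesic from $\mathcal{A}$ to $\mathcal{B}$, which is the claim.

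There is essentially no substantive obstacle: the statement is a direct consequence of Corollary \ref{MainCor1}. The only point deserving a line of justification is the constant-speed claim used to identify the geometric midpoint with $\gamma(1/2)$; if one prefers to avoid invoking the matrix-case fact about \eqref{eqGeodesicSubmanifold}, it can instead be checked directly from the distance formula $\delta(\mathcal{A},\mathcal{B})=\|\log(\bcirc(\mathcal{A})^{-\frac{1}{2}}\bcirc(\mathcal{B})\bcirc(\mathcal{A})^{-\frac{1}{2}})\|_F$ via a short T-eigenvalue computation using the transformation property (Theorem \ref{MainThm1}.(\romannumeral4)) and Proposition \ref{PropFroEig}.
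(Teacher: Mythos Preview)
Your proposal is correct and follows essentially the same approach as the paper: the paper states this as an immediate consequence of Corollary \ref{MainCor1} without further argument, and your proof simply makes explicit the evaluation of \eqref{eqGeodesicTPD} at $t=1/2$. Your additional care in justifying that $\gamma(1/2)$ is the metric midpoint (via the constant-speed property inherited from the matrix case through the isometric embedding $\bcirc$) is a welcome clarification that the paper leaves implicit.
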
	

In the Riemannian manifold $(\mathbb{H}^{n \times n \times p}_{++},g)$, the equality (\ref{eqds}) implies that the length $L(\gamma)$ of a piecewise smooth path $\gamma:[a,b] \rightarrow \mathbb{H}^{n \times n \times p}_{++}$ is given by
\begin{equation}\label{eqLengthofPath}
	L(\gamma)=\sqrt{p} \cdot \int_a^b  ||\gamma^{-\frac{1}{2}}(t)*\gamma'(t)*\gamma^{-\frac{1}{2}}(t)||dt.
\end{equation}
Let $\delta(\mathcal{A},\mathcal{B})$ denote the distance between two points $\mathcal{A},\mathcal{B} \in (\mathbb{H}^{n \times n \times p}_{++},g)$, that is,
\begin{equation*}
	\delta(\mathcal{A},\mathcal{B})=\inf\{L(\gamma)~|~\gamma~\text{is a piecewise smooth path from $\mathcal{A}$ to $\mathcal{B}$} \}.
\end{equation*}
Now, the space $\mathbb{H}^{n \times n \times p}_{++}$ can be considered as a metric space $(\mathbb{H}^{n \times n \times p}_{++},\delta)$ as well as a Riemannian manifold $(\mathbb{H}^{n \times n \times p}_{++},g)$. We distinguish them by denoting $(\mathbb{H}^{n \times n \times p}_{++},\delta)$ or $(\mathbb{H}^{n \times n \times p}_{++},g)$. 

It is a fundamental but important result that, for each invertible $\mathcal{C} \in \C^{n \times n \times p}$, the transformation $\Gamma_\mathcal{C}$ defined in Theorem \ref{MainThm1} is an isometry with respect to the length.

\begin{proposition}
	Let $\mathcal{X} \in \C^{n \times n \times p}$ be  invertible, and let $\mathcal{A}, \mathcal{B} \in \mathbb{H}^{n \times n \times p}_{++}$. Then $L(\Gamma_{\mathcal{X}} \circ \gamma)=L(\gamma)$. In addition, $\delta(\Gamma_{\mathcal{X}}(\mathcal{A}),\Gamma_{\mathcal{X}}(\mathcal{B}))=\delta(\mathcal{A},\mathcal{B})$.
\end{proposition}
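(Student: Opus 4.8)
The plan is to transfer the statement to block circulant matrices via the isometric embedding $\bcirc$, where the corresponding fact is already known for the Riemannian manifold $(\mathcal{P}_{np},\hat g)$ from \citep{bhatia2006riemannian, moakher2005differential}. First I would recall that for each invertible matrix $M \in \C^{np \times np}$ the congruence transformation $\hat\Gamma_M(P) = M^H P M$ is an isometry of $(\mathcal{P}_{np},\hat g)$, which follows by a direct computation: if $\hat\gamma$ is a path in $\mathcal{P}_{np}$ then $(\hat\Gamma_M \circ \hat\gamma)'(t) = M^H \hat\gamma'(t) M$ and $(\hat\Gamma_M \circ \hat\gamma)(t)^{-1} = M^{-1}\hat\gamma(t)^{-1}M^{-H}$, so plugging into the trace metric $\tr(P^{-1}XP^{-1}Y)$ the factors $M^{\pm H}, M^{\pm 1}$ cancel cyclically under the trace, giving $\hat g_{\hat\Gamma_M(\hat\gamma(t))}((\hat\Gamma_M\circ\hat\gamma)'(t),(\hat\Gamma_M\circ\hat\gamma)'(t)) = \hat g_{\hat\gamma(t)}(\hat\gamma'(t),\hat\gamma'(t))$; hence the integrand in the length formula is unchanged and $L(\hat\Gamma_M \circ \hat\gamma) = L(\hat\gamma)$.

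Next I would observe the compatibility $\bcirc \circ \Gamma_{\mathcal{X}} = \hat\Gamma_{\bcirc(\mathcal{X})} \circ \bcirc$ on $\mathbb{H}^{n\times n\times p}_{++}$: indeed, by Lemma \ref{Lemma:BasicPropertiesTproduct}.(\romannumeral1) and Lemma \ref{LemmaHermitian}, $\bcirc(\Gamma_{\mathcal{X}}(\mathcal{A})) = \bcirc(\mathcal{X}^H * \mathcal{A} * \mathcal{X}) = \bcirc(\mathcal{X})^H \bcirc(\mathcal{A}) \bcirc(\mathcal{X}) = \hat\Gamma_{\bcirc(\mathcal{X})}(\bcirc(\mathcal{A}))$. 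Since $\mathcal{X}$ is invertible, $\bcirc(\mathcal{X})$ is invertible by Lemma \ref{LemmaInvertibility}, so $\hat\Gamma_{\bcirc(\mathcal{X})}$ is a well-defined isometry of $(\mathcal{P}_{np},\hat g)$; note however that $\bcirc(\mathcal{X})^H\bcirc(\mathcal{A})\bcirc(\mathcal{X})$ need not be block circulant for arbitrary invertible $\mathcal{X}$, so I should phrase this as an identity of maps into $\mathcal{P}_{np}$ rather than into $\mathcal{BCP}_{np}$, which is all that is needed for the length comparison. Now for a piecewise smooth path $\gamma$ in $\mathbb{H}^{n\times n\times p}_{++}$, since $\bcirc$ is an isometric embedding (established just before Theorem \ref{MainThm2}), $L(\Gamma_{\mathcal{X}}\circ\gamma) = L(\bcirc \circ \Gamma_{\mathcal{X}} \circ \gamma) = L(\hat\Gamma_{\bcirc(\mathcal{X})} \circ \bcirc \circ \gamma) = L(\bcirc\circ\gamma) = L(\gamma)$, where the third equality uses the matrix-level isometry from the first paragraph. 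The distance statement then follows by taking the infimum over all piecewise smooth paths from $\mathcal{A}$ to $\mathcal{B}$: since $\gamma \mapsto \Gamma_{\mathcal{X}}\circ\gamma$ is a bijection between paths from $\mathcal{A}$ to $\mathcal{B}$ and paths from $\Gamma_{\mathcal{X}}(\mathcal{A})$ to $\Gamma_{\mathcal{X}}(\mathcal{B})$ (its inverse being $\gamma \mapsto \Gamma_{\mathcal{X}^{-1}}\circ\gamma$, using Lemma \ref{LemmaBinaryOper} and $\Gamma_{\mathcal{X}^{-1}}\circ\Gamma_{\mathcal{X}} = \mathrm{id}$), the two infima agree and $\delta(\Gamma_{\mathcal{X}}(\mathcal{A}),\Gamma_{\mathcal{X}}(\mathcal{B})) = \delta(\mathcal{A},\mathcal{B})$.

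I expect the only mild subtlety — the "hard part", though it is not really hard — to be the bookkeeping around the fact that $\hat\Gamma_{\bcirc(\mathcal{X})}$ does not preserve $\mathcal{BCP}_{np}$ in general, so one cannot simply invoke an isometry of the submanifold $(\mathcal{BCP}_{np},\hat g|_{\mathcal{BCP}_{np}})$; instead one works with the length of the image curve inside the ambient $(\mathcal{P}_{np},\hat g)$, which is legitimate because $\bcirc\circ\gamma$ and $\hat\Gamma_{\bcirc(\mathcal{X})}\circ\bcirc\circ\gamma$ are honest paths there and the ambient length of $\bcirc\circ\gamma$ equals $L(\gamma)$ by the isometric-embedding property. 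Everything else is a routine cyclic-trace cancellation, so I would keep the write-up short, citing \citep{bhatia2006riemannian} for the matrix congruence isometry and the compatibility identity above for the reduction.
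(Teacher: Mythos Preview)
Your approach is correct and takes a genuinely different route from the paper. The paper argues directly at the tensor level: it reduces the equality of integrands to the equality of T-eigenvalue multisets
\[
\spec\big((\mathcal{X}^H*\gamma(t)*\mathcal{X})^{-\frac12}*(\mathcal{X}^H*\gamma(t)*\mathcal{X})'*(\mathcal{X}^H*\gamma(t)*\mathcal{X})^{-\frac12}\big)=\spec\big(\gamma^{-\frac12}(t)*\gamma'(t)*\gamma^{-\frac12}(t)\big),
\]
which it then deduces from similarity invariance of T-eigenvalues (Proposition \ref{PropTEigen}.(\romannumeral3)) after rewriting both sides as conjugates of $\gamma'(t)*\gamma^{-1}(t)$. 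Your argument instead pushes everything through the isometric embedding $\bcirc$ and invokes the classical congruence isometry of $(\mathcal{P}_{np},\hat g)$; this is shorter and more conceptual, since it reuses the ambient geometry rather than re-deriving a spectral identity. The paper's computation, on the other hand, is self-contained and does not rely on identifying $\bcirc$ as a Riemannian isometry.

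One correction to your bookkeeping: the ``subtlety'' you flag is not actually present. Since $\mathcal{X}\in\C^{n\times n\times p}$, the matrix $\bcirc(\mathcal{X})$ is block circulant, and by Lemma \ref{LemmaHermitian} so is $\bcirc(\mathcal{X})^H=\bcirc(\mathcal{X}^H)$; hence $\bcirc(\mathcal{X})^H\bcirc(\mathcal{A})\bcirc(\mathcal{X})=\bcirc(\mathcal{X}^H*\mathcal{A}*\mathcal{X})$ \emph{is} block circulant by Lemma \ref{Lemma:BasicPropertiesTproduct}.(\romannumeral1). In other words, $\hat\Gamma_{\bcirc(\mathcal{X})}$ does preserve $\mathcal{BCP}_{np}$, and you may safely phrase the compatibility $\bcirc\circ\Gamma_{\mathcal{X}}=\hat\Gamma_{\bcirc(\mathcal{X})}\circ\bcirc$ as an identity of maps $\mathbb{H}^{n\times n\times p}_{++}\to\mathcal{BCP}_{np}$. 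This simplifies your write-up: no detour through the ambient $\mathcal{P}_{np}$ is needed for the path $\hat\Gamma_{\bcirc(\mathcal{X})}\circ\bcirc\circ\gamma$.
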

\begin{proof}
	If $L(\Gamma_{\mathcal{X}} \circ \gamma)=L(\gamma)$ holds, then $\delta(\Gamma_{\mathcal{X}}(\mathcal{A}),\Gamma_{\mathcal{X}}(\mathcal{B}))=\delta(\mathcal{A},\mathcal{B})$ obviously holds by the definition of the Riemannian distance. Next, we claim that $L(\Gamma_{\mathcal{X}} \circ \gamma)=L(\gamma)$. It suffices to show 
			\begin{equation*}
				||(\mathcal{X}^H*\gamma(t)* \mathcal{X})^{-\frac{1}{2}}*(\mathcal{X}^H*\gamma(t)* \mathcal{X})'*(\mathcal{X}^H*\gamma(t)* \mathcal{X})^{-\frac{1}{2}}||=||\gamma^{-\frac{1}{2}}(t)*\gamma'(t)*\gamma^{-\frac{1}{2}}(t)||.
			\end{equation*} 
			By Proposition \ref{PropFroEig}, we only need to check
			\begin{equation*}
				\spec((\mathcal{X}^H*\gamma(t)* \mathcal{X})^{-\frac{1}{2}}*(\mathcal{X}^H*\gamma(t)* \mathcal{X})'*(\mathcal{X}^H*\gamma(t)* \mathcal{X})^{-\frac{1}{2}})=\spec(\gamma^{-\frac{1}{2}}(t)*\gamma'(t)*\gamma^{-\frac{1}{2}}(t)).
			\end{equation*}
			By Proposition \ref{PropTEigen}.(\romannumeral3), we only need to compare T-eigenvalues of 
			\begin{equation*}
				(\mathcal{X}^H*\gamma(t)* \mathcal{X})'*(\mathcal{X}^H*\gamma(t)* \mathcal{X})^{-1}=	(\mathcal{X}^H*\gamma'(t)* \mathcal{X})*(\mathcal{X}^H*\gamma(t)* \mathcal{X})^{-1}~~\text{and}~~\gamma'(t)*\gamma^{-1}(t).
			\end{equation*}
			Note that $(\mathcal{X}^H*\gamma'(t)*\mathcal{X})*(\mathcal{X}^H*\gamma(t)*\mathcal{X})^{-1}=\mathcal{X}^H*(\gamma'(t)*\gamma^{-1}(t))*(\mathcal{X}^H)^{-1}$. Thus, by Proposition \ref{PropTEigen}.(\romannumeral3) again, $(\mathcal{X}^H*\gamma'(t)*\mathcal{X})*(\mathcal{X}^H*\gamma(t)*\mathcal{X})^{-1}$ and $\gamma'(t)*\gamma^{-1}(t)$ have the same T-eigenvalues. This completes the proof.
\end{proof}

For $(\mathbb{H}^{n\times n \times p}_{++}, g)$ being Cartan-Hadamard, it remains to show that this Riemannian manifold is complete and has nonpositive curvature. However, any totally geodesic submanifold of a complete Riemannian manifold with nonpositive curvature does not need to neither be complete nor have nonpositive curvature. 
We will follow two ways to provide a few more geometric description of $\mathbb{H}^{n\times n \times p}_{++}$. One is by using the analogue of the \emph{infinitesimal exponential metric increasing property} (IEMI for short) in order to mimic the arguments in \citep{bhatia2006riemannian}. Another is by considering the isometric embedding of $(\mathbb{H}^{n\times n \times p}_{++}, g)$ to $(\mathcal{BCP}_{np},~\hat{g}|_{\mathcal{BCP}_{np}})$ in order to directly make use of the results in \citep{bhatia2006riemannian}.

\bigskip
\subsection{Infinitesimal exponential metric increasing property}\label{SubSect:infinitesimal exponential metric increasing property}

In this section, we introduce \emph{exponential, logarithm and Fr\'echet derivative} for third-order tensors, in order to follow the arguments in \citep{bhatia2006riemannian}, in particular, to state and prove the IEMI for T-Hermitian tensors.

\begin{definition}
	For a frontal square tensor $\mathcal{A} \in \C^{n \times n \times p}$, the \emph{exponential} is defined as
	\begin{equation*}
		e^{\mathcal{A}}:=\exp(\mathcal{A})=\mathcal{I}_{n,p}+\sum_{k=1}^{\infty}\frac{1}{k!}\mathcal{A}^k=\mathcal{I}_{n,p}+\mathcal{A}+\frac{1}{2!}\mathcal{A}^2+\frac{1}{3!}\mathcal{A}^3+\cdots.
	\end{equation*}
\end{definition}

\begin{proposition}\label{PropExpWellDefined}
	The exponential map $\exp$ for $\mathcal{A} \in \C^{n \times n \times p}$ is well-defined. In particular,
	\begin{equation*}
		\bcirc(\exp(\mathcal{A}))=\exp(\bcirc(\mathcal{A})).
	\end{equation*}
\end{proposition}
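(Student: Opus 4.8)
The plan is to reduce the statement about tensor exponentials to the corresponding (well-known) statement about matrix exponentials via the block circulant matricizing operator, exactly as every other result in this section has been proved. The two ingredients we need are: (i) the convergence of the series $\mathcal{I}_{n,p}+\sum_{k\geq1}\frac{1}{k!}\mathcal{A}^k$ in the Frobenius norm, so that $\exp(\mathcal{A})$ is a genuine element of $\C^{n\times n\times p}$; and (ii) the identity $\bcirc(\exp(\mathcal{A}))=\exp(\bcirc(\mathcal{A}))$.

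First I would observe that $\bcirc\colon \C^{n\times n\times p}\to\C^{np\times np}$ is a linear map, it is injective, and it sends $\mathcal{I}_{n,p}$ to $I_{np}$ and satisfies $\bcirc(\mathcal{A}^k)=\bcirc(\mathcal{A})^k$ for every $k\geq 0$ by Lemma \ref{Lemma:BasicPropertiesTproduct}.(\romannumeral1) (applied inductively). Moreover $\bcirc$ is, up to the scaling $\|\bcirc(\mathcal{A})\|^2 = p\,\|\mathcal{A}\|^2$ noted in the proof of Proposition \ref{PropFroEig}, an isometry onto its image; in particular it is a homeomorphism onto the closed subspace of block circulant matrices. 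Now for the partial sums $S_m:=\mathcal{I}_{n,p}+\sum_{k=1}^{m}\frac{1}{k!}\mathcal{A}^k$ we have $\bcirc(S_m)=I_{np}+\sum_{k=1}^{m}\frac{1}{k!}\bcirc(\mathcal{A})^k$, which is precisely the $m$-th partial sum of the matrix exponential series for $\bcirc(\mathcal{A})$. Since the matrix exponential series converges absolutely for every square matrix, $\bcirc(S_m)\to\exp(\bcirc(\mathcal{A}))$ in $\C^{np\times np}$; because $\bcirc$ is an isometry (up to the fixed factor $\sqrt p$) onto a closed subspace, $\{S_m\}$ is Cauchy in $\C^{n\times n\times p}$, hence converges to some tensor, which we call $\exp(\mathcal{A})$, and applying the continuous map $\bcirc$ to both sides of $S_m\to\exp(\mathcal{A})$ together with $\bcirc(S_m)\to\exp(\bcirc(\mathcal{A}))$ yields $\bcirc(\exp(\mathcal{A}))=\exp(\bcirc(\mathcal{A}))$ by uniqueness of limits.

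I do not expect a serious obstacle here; the only point requiring mild care is making explicit that $\bcirc(\mathcal{A}^k)=\bcirc(\mathcal{A})^k$ — which follows from Lemma \ref{Lemma:BasicPropertiesTproduct}.(\romannumeral1) by induction on $k$, using associativity Lemma \ref{Lemma:BasicPropertiesTproduct}.(\romannumeral2) to make sense of $\mathcal{A}^k$ unambiguously — and that the image of $\bcirc$ is closed in $\C^{np\times np}$ (it is the solution set of a finite system of linear equations among the blocks, hence a linear subspace, hence closed), so that the limit of the Cauchy sequence $\{S_m\}$ lands back in $\C^{n\times n\times p}$ under the homeomorphism. With these remarks the proof is a two-line invocation of the matrix case, consistent with the stylistic pattern of the rest of Section \ref{Sect:Riemannian Geometry}.
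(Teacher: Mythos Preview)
Your proposal is correct and follows essentially the same approach as the paper: apply $\bcirc$ termwise to the partial sums, use $\bcirc(\mathcal{A}^k)=\bcirc(\mathcal{A})^k$ from Lemma~\ref{Lemma:BasicPropertiesTproduct}.(\romannumeral1), and identify the result with the convergent matrix exponential series for $\bcirc(\mathcal{A})$. The paper's proof is terser and leaves the convergence and limit-exchange implicit, whereas you make explicit that $\bcirc$ is (up to the factor $\sqrt{p}$) an isometry onto a closed linear subspace so that the Cauchy sequence $\{S_m\}$ converges in $\C^{n\times n\times p}$; this extra care is welcome but does not constitute a different route.
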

\begin{proof}
	It suffices to show that $\bcirc(\exp(\mathcal{A}))$ is always well-defined. Note that
\begin{align*}
	\bcirc(\exp(\mathcal{A}))&=\bcirc(\mathcal{I}_{n,p})+\bcirc(\mathcal{A})+\frac{1}{2!}\bcirc(\mathcal{A}^2)+\frac{1}{3!}\bcirc(\mathcal{A}^3)+\cdots\\
	&=I_{np}+\bcirc(\mathcal{A})+\frac{1}{2!}\bcirc(\mathcal{A})^2+\frac{1}{3!}\bcirc(\mathcal{A})^3+\cdots\\
	&=\exp(\bcirc(\mathcal{A})),
\end{align*}
where the last $\exp$ denotes the usual exponential map for square matrices. Since powers and sum of block circulant matrices are again block circulant matrices, we conclude that  $\bcirc(\exp(\mathcal{A}))$ is also well-defined.
\end{proof}

Recall that the exponential map from the space of all $N \times N$ Hermitian matrices to the space of all $N \times N$ Hermitian positive definite matrices is a diffeomorphism. We can easily obtain a similar result.

\begin{proposition}\label{PropExpDiffeo}
	The exponential map $\exp:\mathbb{H}^{n \times n \times p} \rightarrow \mathbb{H}^{n \times n \times p}_{++}$ is a diffeomorphism.
\end{proposition}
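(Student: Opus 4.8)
The plan is to reduce the statement to the corresponding well-known fact for Hermitian matrices via the block circulant matricizing functor, exactly as has been done for every earlier result in this paper. Concretely, recall that the classical exponential map $\exp \colon \mathcal{H}_{np} \to \mathcal{P}_{np}$ from Hermitian to Hermitian positive definite $np \times np$ matrices is a diffeomorphism (with inverse the principal matrix logarithm $\log$). By Proposition \ref{PropExpWellDefined} we have the intertwining relation $\bcirc(\exp(\mathcal{A})) = \exp(\bcirc(\mathcal{A}))$ for every $\mathcal{A} \in \C^{n \times n \times p}$, and by Lemma \ref{LemmaHermitian} together with Lemma \ref{LemmaPositiveDefinitenessBlockDiagonal}, the map $\bcirc$ restricts to smooth embeddings $\mathbb{H}^{n \times n \times p} \hookrightarrow \mathcal{H}_{np}$ and $\mathbb{H}^{n \times n \times p}_{++} \hookrightarrow \mathcal{P}_{np}$ with closed images (namely the block circulant Hermitian, resp.\ block circulant Hermitian positive definite, matrices). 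So $\exp$ on tensors is nothing but the restriction of the matrix exponential to the block circulant subspace.

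The key steps, in order, would be: (1) observe that $\exp \colon \mathbb{H}^{n\times n\times p} \to \mathbb{H}^{n\times n\times p}_{++}$ is well-defined and smooth — well-definedness and the intertwining identity are Proposition \ref{PropExpWellDefined}, and smoothness follows because it is the restriction of the smooth matrix exponential to a linear (hence smooth) submanifold, with image landing in $\mathbb{H}^{n\times n\times p}_{++}$ since $\bcirc(\exp(\mathcal{A})) = \exp(\bcirc(\mathcal{A}))$ is Hermitian positive definite whenever $\bcirc(\mathcal{A})$ is Hermitian, invoking Lemma \ref{LemmaPositiveDefinitenessBlockDiagonal}; (2) construct the inverse by setting $\log \colon \mathbb{H}^{n\times n\times p}_{++} \to \mathbb{H}^{n\times n\times p}$ to be $\bcirc^{-1} \circ \log \circ \bcirc$, where $\log$ is the principal matrix logarithm — here one must check that $\log(\bcirc(\mathcal{B}))$ is again block circulant so that $\bcirc^{-1}$ applies, which holds because the principal logarithm of an invertible matrix is a polynomial in that matrix (or a limit of such polynomials) and block circulant matrices are closed under polynomials, sums, and limits, just as in the proof of Proposition \ref{PropExpWellDefined}; (3) verify $\exp \circ \log = \mathrm{id}$ and $\log \circ \exp = \mathrm{id}$ on the respective tensor spaces, which is immediate from the matrix-level identities $\exp \circ \log = \mathrm{id}_{\mathcal{P}_{np}}$ and $\log \circ \exp = \mathrm{id}_{\mathcal{H}_{np}}$ after conjugating by $\bcirc$; (4) conclude that both $\exp$ and its inverse $\log$ are smooth, the latter again because it is a restriction/corestriction of the smooth matrix logarithm, so $\exp$ is a diffeomorphism.

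The only genuine point requiring care — and the one I would flag as the main obstacle, though it is mild — is the claim in step (2) that the principal matrix logarithm preserves the block circulant structure, i.e.\ that $\log(\bcirc(\mathcal{B})) \in \bcirc(\C^{n\times n\times p})$ for $\mathcal{B} \in \mathbb{H}^{n\times n\times p}_{++}$. The cleanest justification is via Lemma \ref{LemmaFourierBlockDiag}: conjugating by $\mathbf{F}_p \otimes I_n$ block-diagonalizes $\bcirc(\mathcal{B})$ into $\diag(B_1,\dots,B_p)$ with each $B_i$ Hermitian positive definite (Lemma \ref{LemmaPositiveDefinitenessBlockDiagonal}), so $\log(\bcirc(\mathcal{B})) = (\mathbf{F}_p^H \otimes I_n)\,\diag(\log B_1,\dots,\log B_p)\,(\mathbf{F}_p \otimes I_n)$, which has exactly the form \eqref{eqFourierBlockDiagMN} and is therefore block circulant — indeed block circulant Hermitian, so its $\bcirc^{-1}$ lies in $\mathbb{H}^{n\times n\times p}$. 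This simultaneously confirms well-definedness of $\log$ on tensors and that it lands in $\mathbb{H}^{n\times n\times p}$. Everything else is a transport of the classical matrix fact along the isometry-type identification $\bcirc$, so the proof is short once this structural observation is recorded.
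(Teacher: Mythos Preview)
Your proposal is correct and follows essentially the same approach as the paper: both transport the classical diffeomorphism $\exp\colon \mathcal{H}_{np}\to\mathcal{P}_{np}$ to the tensor setting via the $\bcirc$ functor and the intertwining relation of Proposition~\ref{PropExpWellDefined}, assembling a commutative square with $\bcirc$ as the vertical diffeomorphisms. You are in fact more explicit than the paper about the one nontrivial point---that the principal logarithm preserves block circulant structure---which you handle cleanly via Lemma~\ref{LemmaFourierBlockDiag}, whereas the paper's proof leaves this surjectivity verification somewhat implicit.
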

\begin{proof}	
	Let $\mathcal{BCH}_{np}$ denote the space of block circulant Hermitian $np \times np$ matrices, and $\mathcal{BCP}_{np}$ the space of block circulant Hermitian positive definite $np \times np$ matrices.  By Proposition \ref{PropExpWellDefined}, for any $\mathcal{A} \in \mathbb{H}^{n \times n \times p}$, $\bcirc(\exp(\mathcal{A}))=\exp(\bcirc(\mathcal{A}))$, and so the following diagram commutes:
	\begin{equation*}
		\begin{matrix}
			\mathcal{BCH}_{np} & \xrightarrow{~~\exp~~} & \mathcal{BCP}_{np}\\
			~~~~~~\bigg\uparrow {\bcirc} & & ~~~~~~\bigg\uparrow {\bcirc}\\
			\mathbb{H}^{n \times n \times p} & \xrightarrow{~~\exp~~} & \mathbb{H}^{n \times n \times p}_{++}
		\end{matrix}
	\end{equation*}
	By Lemma \ref{LemmaHermitian} and Lemma \ref{LemmaPositiveDefinitenessBlockDiagonal}, the vertical maps are bijective. Since both maps $\bcirc$ and $\bcirc^{-1}$ are smooth, the vertical maps are indeed  diffeomorphisms. The upper horizontal map is a restriction of $\exp:\mathcal{H}_{np} \rightarrow \mathcal{P}_{np}$ to $\mathcal{BCH}_{np}$, and is surjective since the image block circulant matrix under $\exp$ is block circulant. Thus, the upper horizontal map is also a diffeomorphism.  Therefore, the assertion is true.
\end{proof}

Using Proposition \ref{PropExpDiffeo}, we may define the logarithmic map $\log$ on $\mathbb{H}^{n \times n \times p}_{++}$ as the inverse of the exponential map $\exp$ defined on $\mathbb{H}^{n \times n \times p}$.

\begin{definition}
	For $\mathcal{A} \in \mathbb{H}^{n \times n \times p}_{++}$, the \emph{logarithm} of $\mathcal{A}$, denoted by $\log(\mathcal{A})$, is defined as $\mathcal{B} \in \mathbb{H}^{n \times n \times p}$ such that $\exp(\mathcal{B})=\mathcal{A}$. That is, the logarithm on $\mathbb{H}^{n \times n \times p}_{++}$ is the inverse of $\exp:\mathbb{H}^{n \times n \times p} \rightarrow \mathbb{H}^{n \times n \times p}_{++}$.
\end{definition}

Now, we are going to consider the Fr\'echet derivative of $\exp:\mathbb{H}^{n \times n \times p} \rightarrow \mathbb{H}^{n \times n \times p}_{++}$. For normed vector spaces $(V,~||\cdot||_V),~(W,~||\cdot||_W)$ and an open subset $U$ of $V$, a continuous map $\phi:U \rightarrow W$ is said to be Fr\'echet differentiable at $x \in U$ if there exists a bounded linear operator $L:V \rightarrow W$ such that 
\begin{equation*}
	\lim_{h \rightarrow 0}\frac{||\phi(x+h)-\phi(x)-L(h)||_W}{||h||_V}=0.
\end{equation*}
If such $L$ exists, the $L$ is called the Fr\'echet derivative of $\phi$ at $x$ and denoted by $D\phi(x)$. Following the general definition, we define the Fr\'echet derivative of a continuous map from the normed space $\mathbb{H}^{n \times n \times p}=(\mathbb{H}^{n \times n \times p},||\cdot||_{\mathcal{F}})$ to it.

\begin{definition}
	A continuous map $\phi:\mathbb{H}^{n \times n \times p} \rightarrow \mathbb{H}^{n \times n \times p}$ is said to be \emph{Fr\'echet differentiable} at $\mathcal{X} \in \mathbb{H}^{n \times n \times p}$ if there exists a bounded linear operator $\mathcal{L}:\mathbb{H}^{n \times n \times p} \rightarrow \mathbb{H}^{n \times n \times p}$ such that
	\begin{equation*}
		\lim_{\mathcal{H} \rightarrow \mathcal{O}}\frac{||\phi(\mathcal{X}+\mathcal{H})-\phi(\mathcal{X})-\langle \mathcal{L},\mathcal{H}\rangle ||_\mathcal{F}}{||\mathcal{H}||_\mathcal{F}}=0.
	\end{equation*}
	If such $\mathcal{L}$ exists, then $\mathcal{L}$ is called the \emph{Fr\'echet derivative} of $\phi$ at $\mathcal{X}$ and denoted by $D\phi(\mathcal{X})$.
\end{definition}

Note that a continuous map $\phi:\mathbb{H}^{n \times n \times p} \rightarrow \mathbb{H}^{n \times n \times p}$ naturally induces a continuous map $\widetilde{\phi}$ from the set of unfoldings of T-Hermitian tensors to it and $\widehat{\phi}$ from the set of block circulant Hermitian matrices to it, respectively, defined by
\begin{equation*}
	\widetilde{\phi}(\unfold(\mathcal{A}))=\unfold(\phi(\mathcal{A}))
\end{equation*} 
and
\begin{equation*}
	\widehat{\phi}(\bcirc(\mathcal{A}))=\bcirc(\phi(\mathcal{A})).
\end{equation*}
If we use a similar notation for Fr\'echet derivative of matrix functions, the following property holds.

\begin{proposition}\label{PropFrechet}
	Let $\phi:\mathbb{H}^{n \times n \times p} \rightarrow \mathbb{H}^{n \times n \times p}$ be a continuous map. Then the followings are equivalent.
	\begin{itemize}
		\item [(\romannumeral1)] $\phi$ is Fr\'echet differentiable at $\mathcal{X} \in \mathbb{H}^{n \times n \times p}$.
		\item [(\romannumeral2)] $\widetilde{\phi}$ is Fr\'echet differentiable at $\unfold(\mathcal{X})$.
		\item [(\romannumeral3)] $\widehat{\phi}$ is Fr\'echet differentiable at $\bcirc(\mathcal{X})$.
	\end{itemize} 
	In particular, if $\phi$ is Fr\'echet differentiable at $\mathcal{X} \in \mathbb{H}^{n \times n \times p}$, then 
	\begin{equation*}
		D\phi(\mathcal{X})=\fold(D\widetilde{\phi}(\unfold(\mathcal{X})))=\bcirc^{-1}(D\widehat{\phi}(\bcirc(\mathcal{X}))).
	\end{equation*}
\end{proposition}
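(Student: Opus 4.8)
The plan is to reduce everything to the chain rule for Fr\'echet derivatives, by observing that $\unfold$ and $\bcirc$ are linear isomorphisms onto (closed, finite-dimensional) spaces of matrices, so that $\phi$, $\widetilde\phi$, $\widehat\phi$ are conjugates of one another; Fr\'echet differentiability and the displayed derivative formula then fall out at once.

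First I would record the elementary facts. The operators $\unfold$ and $\fold$ are mutually inverse linear maps, and $\unfold$ is an isometry: $\|\unfold(\mathcal A)\|_F=\|\mathcal A\|_{\mathcal F}$. The operators $\bcirc$ and $\bcirc^{-1}$ are mutually inverse linear maps, and $\bcirc$ is a constant multiple of an isometry: $\|\bcirc(\mathcal A)\|_F=\sqrt{p}\,\|\mathcal A\|_{\mathcal F}$, since each frontal slice of $\mathcal A$ occurs exactly $p$ times in $\bcirc(\mathcal A)$ (cf. the proof of Proposition \ref{PropFroEig}). In particular all four maps are bounded linear operators between finite-dimensional normed spaces, hence homeomorphisms onto their images, and each is its own Fr\'echet derivative at every point.

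Next, from the defining relations $\widetilde\phi(\unfold(\mathcal A))=\unfold(\phi(\mathcal A))$ and $\widehat\phi(\bcirc(\mathcal A))=\bcirc(\phi(\mathcal A))$ I would rewrite
\begin{equation*}
	\widetilde\phi=\unfold\circ\phi\circ\fold,\qquad \phi=\fold\circ\widetilde\phi\circ\unfold,\qquad \widehat\phi=\bcirc\circ\phi\circ\bcirc^{-1},\qquad \phi=\bcirc^{-1}\circ\widehat\phi\circ\bcirc .
\end{equation*}
Applying the chain rule --- legitimate because in each composite the two outer factors are linear, hence Fr\'echet differentiable everywhere, so only the middle factor's differentiability is at issue --- yields the equivalences (i)$\Leftrightarrow$(ii) and (i)$\Leftrightarrow$(iii), together with the conjugation identities
\begin{equation*}
	D\widetilde\phi(\unfold(\mathcal X))=\unfold\circ D\phi(\mathcal X)\circ\fold,\qquad D\widehat\phi(\bcirc(\mathcal X))=\bcirc\circ D\phi(\mathcal X)\circ\bcirc^{-1}.
\end{equation*}
Solving for $D\phi(\mathcal X)$ gives $D\phi(\mathcal X)=\fold\circ D\widetilde\phi(\unfold(\mathcal X))\circ\unfold=\bcirc^{-1}\circ D\widehat\phi(\bcirc(\mathcal X))\circ\bcirc$, which is exactly the stated formula once $\fold(\,\cdot\,)$ and $\bcirc^{-1}(\,\cdot\,)$ are read as the induced conjugation on linear operators.

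The argument is essentially formal, so I do not expect a genuine obstacle; the one point requiring care is that the domains of $\widetilde\phi$ and $\widehat\phi$ are proper linear subspaces of $\C^{np\times n}$ and $\C^{np\times np}$, so ``Fr\'echet differentiable at $\unfold(\mathcal X)$'' (resp. at $\bcirc(\mathcal X)$) must be understood intrinsically, with the increment ranging only over that subspace. Since the subspace in question is precisely the image of $\mathbb H^{n\times n\times p}$ under the relevant isometry, the chain-rule bookkeeping is unaffected, and it suffices to flag this convention at the outset of the proof.
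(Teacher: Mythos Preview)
Your proposal is correct and essentially the same as the paper's proof. Both arguments rest on the observation that $\unfold$ and $\bcirc$ are linear bijections scaling the Frobenius norm by a constant (by $1$ and $\sqrt{p}$, respectively); the paper just writes out the three equivalent limit conditions directly, whereas you package the same computation as an instance of the chain rule for Fr\'echet derivatives, and your remark that the displayed formula should be read as conjugation of linear operators matches how the paper's proof identifies $\mathcal L$ with $\unfold(\mathcal L)$ and $\bcirc(\mathcal L)$.
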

\begin{proof}
	Considering the definition of $\unfold$, we obviously obtain 
	\begin{align*}
		&\lim_{\mathcal{H} \rightarrow \mathcal{O}}\frac{||\phi(\mathcal{X}+\mathcal{H})-\phi(\mathcal{X})-\langle \mathcal{L},\mathcal{H}\rangle||_\mathcal{F}}{||\mathcal{H}||_\mathcal{F}}=0\\
		&\Leftrightarrow \lim_{\unfold(\mathcal{H}) \rightarrow \mathcal{O}}\frac{||\widetilde{\phi}(\unfold(\mathcal{X})+\unfold(\mathcal{H}))-\widetilde{\phi}(\unfold(\mathcal{X}))-\langle \unfold(\mathcal{L}),\unfold(\mathcal{H})\rangle||_F}{||\unfold(\mathcal{H})||_F}=0\\
		&\Leftrightarrow \lim_{\bcirc(\mathcal{H}) \rightarrow \mathcal{O}}\frac{||\widehat{\phi}(\bcirc(\mathcal{X})+\bcirc(\mathcal{H}))-\widehat{\phi}(\bcirc(\mathcal{X}))-\langle \bcirc(\mathcal{L}),\bcirc(\mathcal{H})\rangle||_F}{||\bcirc(\mathcal{H})||_F}=0.
	\end{align*}
	Thus, the proof is done.
\end{proof}

Now we prove an important lemma so called the IEMI for T-Hermitian tensors. For the time being, adopt the notation of the exponential function for Hermitian matrices as identical to that used for T-Hermitian tensors. Considering an orthonormal basis of eigenvectors for $H \in \mathcal{H}_N$,  the Fr\'{e}chet derivative of $e^H$ at $K \in \mathcal{H}_N$ is given by
\begin{equation*}
	De^H(K)=\left[ \frac{e^{\lambda_i}-e^{\lambda_j}}{\lambda_i-\lambda_j} \right] \circ K,
\end{equation*} 
where $\lambda_1,...,\lambda_N$ are the eigenvalues of $H$ and $\circ$ denotes the Hadamard product, i.e., entry-wise product \citep{bhatia2006riemannian}, and hence
\begin{align*}
	&(e^H)^{-\frac{1}{2}}De^H(K)(e^H)^{-\frac{1}{2}}\\
	&=\diag\big(e^{-\lambda_i/2}~|~i=1,...,N\big)\left( \left[ \frac{e^{\lambda_i}-e^{\lambda_j}}{\lambda_i-\lambda_j} \right] \circ K\right)\diag\big(e^{-\lambda_j/2}~|~j=1,...,N\big)\\
	&=\left[ \frac{e^{(\lambda_i-\lambda_j)/2}-e^{-(\lambda_i-\lambda_j)/2}}{\lambda_i-\lambda_j} \right] \circ K.
\end{align*}
The fact $\frac{e^{t/2}-e^{-t/2}}{t} \geq 1$ for all $t \in \R$ implies an inequality $||(e^H)^{-\frac{1}{2}}De^H(K)(e^H)^{-\frac{1}{2}}||_F \geq ||K||_F$ so called the IEMI for Hermitian matrices. We will prove the following IEMI for T-Hermitian tensors using this argument as in \citep{bhatia2006riemannian}. 

\begin{lemma}[IEMI for T-Hermitian tensors]\label{LemmaIEMI}
	Let $\mathcal{H},\mathcal{K} \in \mathbb{H}^{n \times n \times p}$. Then
	\begin{equation*}
		||(e^{\mathcal{H}})^{-\frac{1}{2}}*De^{\mathcal{H}}(\mathcal{K})*(e^{\mathcal{H}})^{-\frac{1}{2}}|| \geq ||\mathcal{K}||.
	\end{equation*}
\end{lemma}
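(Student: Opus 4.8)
The plan is to reduce the statement to the already-known IEMI for Hermitian matrices via the block circulant matricizing dictionary, exactly in the spirit of the rest of the paper. First I would observe that the left-hand side is a Frobenius norm of a T-Hermitian tensor, so by Proposition \ref{PropFroEig} (or directly by $||\mathcal{A}||^2 = \frac{1}{p}||\bcirc(\mathcal{A})||^2$) it equals $\frac{1}{\sqrt p}$ times the Frobenius norm of the block circulant matricization of $(e^{\mathcal{H}})^{-\frac{1}{2}}*De^{\mathcal{H}}(\mathcal{K})*(e^{\mathcal{H}})^{-\frac{1}{2}}$, and similarly $||\mathcal{K}|| = \frac{1}{\sqrt p}||\bcirc(\mathcal{K})||_F$. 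So it suffices to prove the inequality after applying $\bcirc$ to everything.

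Next I would push $\bcirc$ through each factor. By Lemma \ref{Lemma:BasicPropertiesTproduct}.(\romannumeral1) the T-product turns into matrix multiplication, so $\bcirc\big((e^{\mathcal{H}})^{-\frac{1}{2}}*De^{\mathcal{H}}(\mathcal{K})*(e^{\mathcal{H}})^{-\frac{1}{2}}\big) = \bcirc\big((e^{\mathcal{H}})^{-\frac{1}{2}}\big)\cdot\bcirc\big(De^{\mathcal{H}}(\mathcal{K})\big)\cdot\bcirc\big((e^{\mathcal{H}})^{-\frac{1}{2}}\big)$. By Proposition \ref{PropExpWellDefined} and Lemma \ref{LemmaBinaryOper}.(\romannumeral3) applied to the square root of $e^{\mathcal{H}}$, we get $\bcirc\big((e^{\mathcal{H}})^{-\frac{1}{2}}\big) = \big(\exp(\bcirc(\mathcal{H}))\big)^{-\frac{1}{2}} = (e^{H})^{-\frac{1}{2}}$ where $H := \bcirc(\mathcal{H})$ is Hermitian (Lemma \ref{LemmaHermitian}). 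The one genuinely new ingredient is identifying $\bcirc\big(De^{\mathcal{H}}(\mathcal{K})\big)$ with $De^{H}(K)$ where $K := \bcirc(\mathcal{K})$: this is precisely Proposition \ref{PropFrechet} applied to the exponential map, since $\widehat{\exp} = \exp|_{\mathcal{BCH}_{np}}$ and the Fréchet derivative of a restriction of $\exp$ to the (linear) subspace of block circulant Hermitian matrices agrees with that of $\exp$ in the block circulant direction $K$. Putting these together, $\bcirc$ of the left-hand tensor equals $(e^H)^{-\frac12}De^H(K)(e^H)^{-\frac12}$.

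Finally I would invoke the IEMI for Hermitian matrices recalled just before the lemma statement: $||(e^H)^{-\frac12}De^H(K)(e^H)^{-\frac12}||_F \ge ||K||_F$, valid because $\frac{e^{t/2}-e^{-t/2}}{t}\ge 1$ for all real $t$ and the diagonalization argument there applies verbatim to the Hermitian matrix $H$. Dividing both sides by $\sqrt p$ and translating back through the norm identity of the first step yields $||(e^{\mathcal{H}})^{-\frac12}*De^{\mathcal{H}}(\mathcal{K})*(e^{\mathcal{H}})^{-\frac12}|| \ge ||\mathcal{K}||$, as desired.

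The step I expect to require the most care is the identification $\bcirc\big(De^{\mathcal{H}}(\mathcal{K})\big) = De^{\bcirc(\mathcal{H})}(\bcirc(\mathcal{K}))$: one must be slightly careful that the Fréchet derivative computed intrinsically on the normed space $\mathbb{H}^{n\times n\times p}$ really does correspond under $\bcirc$ to the Fréchet derivative of the matrix exponential, and in particular that $\widehat{\exp}$ is differentiable with the expected derivative even though its domain $\mathcal{BCH}_{np}$ is a proper subspace of $\mathcal{H}_{np}$ — this is handled by Proposition \ref{PropFrechet}, noting that the restriction of a Fréchet-differentiable map to a linear subspace is Fréchet-differentiable with derivative the restriction of the original derivative, and that $\exp$ preserves the block circulant structure so the derivative direction $K = \bcirc(\mathcal{K})$ stays in $\mathcal{BCH}_{np}$. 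Everything else is a routine application of the $\bcirc$-dictionary already developed in Sections \ref{Sect:Preliminaries} and \ref{Sect:Riemannian Geometry}.
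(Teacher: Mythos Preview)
Your proposal is correct and follows essentially the same route as the paper: reduce to the matrix IEMI via the $\bcirc$-dictionary, using Proposition~\ref{PropExpWellDefined} and Proposition~\ref{PropFrechet} to identify $\bcirc\big((e^{\mathcal{H}})^{-1/2}*De^{\mathcal{H}}(\mathcal{K})*(e^{\mathcal{H}})^{-1/2}\big)$ with $(e^{H})^{-1/2}De^{H}(K)(e^{H})^{-1/2}$ for $H=\bcirc(\mathcal{H})$, $K=\bcirc(\mathcal{K})$, and then apply the norm identity $||\mathcal{A}||=\frac{1}{\sqrt p}||\bcirc(\mathcal{A})||_F$ on both sides. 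The only cosmetic difference is that the paper writes out the Hadamard-product formula for $(e^{H})^{-1/2}De^{H}(K)(e^{H})^{-1/2}$ explicitly before invoking $\frac{e^{t/2}-e^{-t/2}}{t}\ge 1$, whereas you cite the matrix IEMI as a black box.
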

\begin{proof}
	At first, we represent the left-hand side of the assertion using T-eigenvalues of $\mathcal{H}$. By Proposition \ref{PropExpWellDefined} and \ref{PropFrechet},
	\begin{equation}\label{eqbcircFrechet}
		\bcirc((e^{\mathcal{H}})^{-\frac{1}{2}}*De^{\mathcal{H}}(\mathcal{K})*(e^{\mathcal{H}})^{-\frac{1}{2}})=(e^{\bcirc(\mathcal{H})})^{-\frac{1}{2}}\cdot (De^{\bcirc(\mathcal{H})}(\bcirc(\mathcal{K})))\cdot (e^{\bcirc(\mathcal{H})})^{-\frac{1}{2}}.
	\end{equation}
	By the argument above, we obtain that the right-hand side of (\ref{eqbcircFrechet}) is
	\begin{equation*}
		\left[ \frac{e^{(\lambda_i-\lambda_j)/2}-e^{-(\lambda_i-\lambda_j)/2}}{\lambda_i-\lambda_j} \right] \circ \bcirc(\mathcal{K}).
	\end{equation*}
	where $\lambda_1,...,\lambda_{np}$ are the T-eigenvalues of $\mathcal{H}$ (which are real). Since $\frac{e^{t/2}-e^{-t/2}}{t} \geq 1$ for all $t \in \R$, then 
	\begin{align*}
		||(e^{\mathcal{H}})^{-\frac{1}{2}}*De^{\mathcal{H}}(\mathcal{K})*(e^{\mathcal{H}})^{-\frac{1}{2}}||^2 &=\frac{1}{p}||\bcirc((e^{\mathcal{H}})^{-\frac{1}{2}}*De^{\mathcal{H}}(\mathcal{K})*(e^{\mathcal{H}})^{-\frac{1}{2}})||^2 \\
		&=\frac{1}{p}\left|\left|\left[ \frac{e^{(\lambda_i-\lambda_j)/2}-e^{-(\lambda_i-\lambda_j)/2}}{\lambda_i-\lambda_j} \right] \circ \bcirc(\mathcal{K})\right|\right|^2\\
		&\geq \frac{1}{p}||\bcirc(\mathcal{K})||_F^2\\
		&\geq ||\mathcal{K}||^2.
	\end{align*}
\end{proof}

\bigskip
\subsection{Completeness and curvature}\label{SubSect:Completeness}

\begin{lemma}\label{LemmaLowerBoundofLength}
	Let $\gamma:[a,b] \rightarrow \mathbb{H}^{n \times n \times p}_{++}$ be a smooth path parametrized as $\gamma(t)=e^{\mathcal{H}(t)}$ (then $\mathcal{H}(t)=\log \gamma(t)$). Then
	\begin{equation*}
		L(\gamma) \geq 
\sqrt{p} \cdot \int_a^b||\mathcal{H}'(t)||dt.
	\end{equation*}
	In addition, for any $\mathcal{A},\mathcal{B} \in \mathbb{H}^{n \times n \times p}_{++}$,
	\begin{equation}\label{eqMainDistance1}
		\delta(\mathcal{A},\mathcal{B}) \geq \sqrt{p} \cdot ||\log(\mathcal{A})-\log(\mathcal{B})||.
	\end{equation}
\end{lemma}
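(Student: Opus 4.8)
The plan is to reduce the tensor statement to the matrix IEMI-based inequality established for Hermitian positive definite matrices in \citep{bhatia2006riemannian}, using the dictionary between $\mathbb{H}^{n\times n\times p}_{++}$ and $\mathcal{BCP}_{np}$ provided by $\bcirc$. First I would fix the parametrization $\gamma(t)=e^{\mathcal{H}(t)}$ with $\mathcal{H}(t)=\log\gamma(t)\in\mathbb{H}^{n\times n\times p}$ (legitimate by Proposition \ref{PropExpDiffeo}), and rewrite the integrand of $L(\gamma)$ from \eqref{eqLengthofPath}. The key algebraic identity is that $\gamma'(t)=De^{\mathcal{H}(t)}(\mathcal{H}'(t))$, so that
\begin{equation*}
	\gamma^{-\frac12}(t)*\gamma'(t)*\gamma^{-\frac12}(t)=(e^{\mathcal{H}(t)})^{-\frac12}*De^{\mathcal{H}(t)}(\mathcal{H}'(t))*(e^{\mathcal{H}(t)})^{-\frac12}.
\end{equation*}
Then Lemma \ref{LemmaIEMI} (IEMI for T-Hermitian tensors) applied pointwise with $\mathcal{H}=\mathcal{H}(t)$ and $\mathcal{K}=\mathcal{H}'(t)$ gives $\|\gamma^{-\frac12}(t)*\gamma'(t)*\gamma^{-\frac12}(t)\|\ge\|\mathcal{H}'(t)\|$, and integrating against $\sqrt{p}\,dt$ yields $L(\gamma)\ge\sqrt{p}\int_a^b\|\mathcal{H}'(t)\|\,dt$, the first claimed inequality.

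Next I would derive \eqref{eqMainDistance1} from the first part. For any piecewise smooth path $\gamma$ from $\mathcal{A}$ to $\mathcal{B}$, write $\mathcal{H}(t)=\log\gamma(t)$, so $\mathcal{H}$ is a piecewise smooth path in the vector space $\mathbb{H}^{n\times n\times p}$ from $\log(\mathcal{A})$ to $\log(\mathcal{B})$. The triangle inequality for the Frobenius norm (i.e.\ the fact that a straight segment is the shortest path in a normed vector space) gives $\int_a^b\|\mathcal{H}'(t)\|\,dt\ge\|\mathcal{H}(b)-\mathcal{H}(a)\|=\|\log(\mathcal{A})-\log(\mathcal{B})\|$. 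Combining with the length bound just proved, $L(\gamma)\ge\sqrt{p}\,\|\log(\mathcal{A})-\log(\mathcal{B})\|$ for every such $\gamma$; taking the infimum over all paths from $\mathcal{A}$ to $\mathcal{B}$ gives $\delta(\mathcal{A},\mathcal{B})\ge\sqrt{p}\,\|\log(\mathcal{A})-\log(\mathcal{B})\|$.

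The only genuinely delicate point is justifying the identity $\gamma'(t)=De^{\mathcal{H}(t)}(\mathcal{H}'(t))$, i.e.\ the chain rule for the tensor exponential along a smooth curve. I would handle this via Proposition \ref{PropFrechet} and Proposition \ref{PropExpWellDefined}: applying $\bcirc$ throughout, $\bcirc(\gamma(t))=\exp(\bcirc(\mathcal{H}(t)))$ is a smooth curve in $\mathcal{BCP}_{np}$, the matrix exponential is Fr\'echet differentiable, and the ordinary chain rule gives $\frac{d}{dt}\bcirc(\gamma(t))=De^{\bcirc(\mathcal{H}(t))}(\tfrac{d}{dt}\bcirc(\mathcal{H}(t)))$; then $\bcirc^{-1}$, which is linear and hence commutes with differentiation, transports this back to the tensor identity, with $D e^{\mathcal{H}(t)}$ interpreted as in Proposition \ref{PropFrechet}. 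Everything else — interchanging norm and integral, pointwise application of Lemma \ref{LemmaIEMI}, and the normed-space triangle inequality — is routine. I expect that packaging the chain-rule step cleanly, rather than any conceptual difficulty, will be the main thing to get right.
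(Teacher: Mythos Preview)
Your proposal is correct and follows essentially the same approach as the paper: apply the chain rule to write $\gamma'(t)=De^{\mathcal{H}(t)}(\mathcal{H}'(t))$, invoke Lemma \ref{LemmaIEMI} pointwise, integrate using \eqref{eqLengthofPath}, and then use the Euclidean straight-line inequality for $\mathcal{H}(t)=\log\gamma(t)$ before taking the infimum over paths. Your extra care in justifying the chain rule via $\bcirc$ and Propositions \ref{PropExpWellDefined} and \ref{PropFrechet} goes slightly beyond the paper, which simply cites the chain rule without elaboration.
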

\begin{proof}
	By the chain rule, $\gamma'(t)=De^{\mathcal{H}(t)}(\mathcal{H}'(t))$. By IEMI (Lemma \ref{LemmaIEMI}) with $\mathcal{K}=\mathcal{H}'(t)$, we obtain
	\begin{equation*}
		||(e^{\mathcal{H}(t)})^{-\frac{1}{2}}*De^{\mathcal{H}(t)}(\mathcal{H}'(t))*(e^{\mathcal{H}(t)})^{-\frac{1}{2}}|| \geq ||\mathcal{H}'(t)||.
	\end{equation*}
	By the formula (\ref{eqLengthofPath}) of $L(\gamma)$,
	\begin{equation*}
		L(\gamma)= \sqrt{p} \cdot \int_a^b||(e^{\mathcal{H}(t)})^{-\frac{1}{2}}*De^{\mathcal{H}(t)}(\mathcal{H}'(t))*(e^{\mathcal{H}(t)})^{-\frac{1}{2}}||dt \geq \sqrt{p} \cdot \int_a^b||\mathcal{H}'(t)||dt.
	\end{equation*}
	Now, we prove the second statement. Assume that $\gamma$ is from $\mathcal{A}$ to $\mathcal{B}$. Note that $\mathcal{H}(t)=\log \gamma(t)$ defines a smooth (by Proposition \ref{PropExpDiffeo}) path in the Euclidean space $\mathbb{H}^{n \times n \times p}$. Since the Euclidean length of $\mathcal{H}(t)~(a \leq t \leq b)$ is $\int_a^b||\mathcal{H}'(t)||dt$, we have the following: 
	\begin{equation*}
		L(\gamma) \geq \sqrt{p} \cdot \int_a^b||\mathcal{H}'(t)||dt \geq \sqrt{p} \cdot ||\mathcal{H}(a)-\mathcal{H}(b)||= \sqrt{p} \cdot ||\log(\mathcal{A})-\log(\mathcal{B})||.
	\end{equation*}
	Thus, by the definition of $\delta(\mathcal{A},\mathcal{B})$, the second assertion holds.
\end{proof}

\begin{lemma}\label{LemmaExpIso}
	Let $\mathcal{A},\mathcal{B} \in \mathbb{H}^{n \times n \times p}_{++}$ be commuting T-positive definite tensors, that is, $\mathcal{A}*\mathcal{B}=\mathcal{B}*\mathcal{A}$. Then
	\begin{equation}\label{eqMainDistance2}
		\delta(\mathcal{A},\mathcal{B})=\sqrt{p} \cdot ||\log(\mathcal{A})-\log(\mathcal{B})||.
	\end{equation}
\end{lemma}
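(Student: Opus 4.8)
The plan is to establish the inequality opposite to \eqref{eqMainDistance1}, namely to produce an explicit path from $\mathcal{A}$ to $\mathcal{B}$ whose length equals $\sqrt{p}\cdot\|\log(\mathcal{A})-\log(\mathcal{B})\|$; combining this upper bound on $\delta(\mathcal{A},\mathcal{B})$ with the lower bound from Lemma \ref{LemmaLowerBoundofLength} then forces \eqref{eqMainDistance2}.

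First I would observe that the logarithms of $\mathcal{A}$ and $\mathcal{B}$ commute. Indeed, $\mathcal{A}*\mathcal{B}=\mathcal{B}*\mathcal{A}$ gives $\bcirc(\mathcal{A})\bcirc(\mathcal{B})=\bcirc(\mathcal{B})\bcirc(\mathcal{A})$ by Lemma \ref{Lemma:BasicPropertiesTproduct}.(\romannumeral1); since the principal logarithm of a Hermitian positive definite matrix is a limit of polynomials in it, the matrices $\bcirc(\log\mathcal{A})=\log(\bcirc(\mathcal{A}))$ and $\bcirc(\log\mathcal{B})=\log(\bcirc(\mathcal{B}))$ commute (the identity $\bcirc(\log(\,\cdot\,))=\log(\bcirc(\,\cdot\,))$ follows from Proposition \ref{PropExpWellDefined}, as $\log$ is the inverse of $\exp$), and applying $\bcirc^{-1}$ yields $\log(\mathcal{A})*\log(\mathcal{B})=\log(\mathcal{B})*\log(\mathcal{A})$. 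Now set $\mathcal{H}(t)=(1-t)\log(\mathcal{A})+t\log(\mathcal{B})$ and $\gamma(t)=\exp(\mathcal{H}(t))$ for $t\in[0,1]$; by Proposition \ref{PropExpDiffeo} this is a smooth path in $\mathbb{H}^{n\times n\times p}_{++}$ with $\gamma(0)=\mathcal{A}$ and $\gamma(1)=\mathcal{B}$, and after collecting the commuting powers it in fact coincides with the geodesic of Corollary \ref{MainCor1}.

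Next I would compute $L(\gamma)$ through \eqref{eqLengthofPath}. By the chain rule $\gamma'(t)=De^{\mathcal{H}(t)}(\mathcal{H}'(t))$ with $\mathcal{H}'(t)=\log(\mathcal{B})-\log(\mathcal{A})$, which commutes with $\mathcal{H}(t)$. For commuting T-Hermitian tensors one has $De^{\mathcal{H}}(\mathcal{K})=e^{\mathcal{H}}*\mathcal{K}$: via Proposition \ref{PropExpWellDefined} and Proposition \ref{PropFrechet} this reduces to the matrix identity $De^{H}(K)=e^{H}K$ for commuting Hermitian $H,K$, which is read off from the divided-difference formula for $De^{H}$ recalled just before Lemma \ref{LemmaIEMI} (when $K$ preserves the eigenspaces of $H$, any nonzero entry $K_{ij}$ in the eigenbasis has $\lambda_i=\lambda_j$, so the divided difference equals $e^{\lambda_i}$ there). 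Consequently, using the commutativity twice,
\begin{equation*}
	\gamma^{-\frac{1}{2}}(t)*\gamma'(t)*\gamma^{-\frac{1}{2}}(t)=e^{-\mathcal{H}(t)/2}*e^{\mathcal{H}(t)}*\mathcal{H}'(t)*e^{-\mathcal{H}(t)/2}=\mathcal{H}'(t)=\log(\mathcal{B})-\log(\mathcal{A}),
\end{equation*}
which is independent of $t$. Substituting into \eqref{eqLengthofPath} gives $L(\gamma)=\sqrt{p}\int_0^1\|\log(\mathcal{B})-\log(\mathcal{A})\|\,dt=\sqrt{p}\cdot\|\log(\mathcal{A})-\log(\mathcal{B})\|$, whence $\delta(\mathcal{A},\mathcal{B})\le\sqrt{p}\cdot\|\log(\mathcal{A})-\log(\mathcal{B})\|$; together with \eqref{eqMainDistance1} this proves \eqref{eqMainDistance2}.

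The only delicate point, and the one I expect to cost the most care, is the justification of $De^{\mathcal{H}}(\mathcal{K})=e^{\mathcal{H}}*\mathcal{K}$ for commuting T-Hermitian $\mathcal{H},\mathcal{K}$; I would handle it by transporting the identity to block circulant matrices via Proposition \ref{PropFrechet} and the divided-difference formula, rather than differentiating the tensor power series directly. Everything else reduces to a short computation once Lemma \ref{LemmaLowerBoundofLength} is available. As an alternative route one can bypass the length computation: since $\bcirc$ is an isometric embedding onto the totally geodesic submanifold $\mathcal{BCP}_{np}\subset(\mathcal{P}_{np},\hat{g})$, one has $\delta(\mathcal{A},\mathcal{B})=\hat{\delta}(\bcirc(\mathcal{A}),\bcirc(\mathcal{B}))$, and the classical matrix identity $\hat{\delta}(X,Y)=\|\log X-\log Y\|_F$ for commuting $X,Y$ combined with the scaling $\|\bcirc(\mathcal{Z})\|_F=\sqrt{p}\,\|\mathcal{Z}\|$ from Proposition \ref{PropFroEig} gives the result immediately.
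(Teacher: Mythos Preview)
Your proof is correct and takes essentially the same approach as the paper: exhibit the path $\gamma(t)=\exp\bigl((1-t)\log(\mathcal{A})+t\log(\mathcal{B})\bigr)$, use commutativity to compute its length as $\sqrt{p}\cdot\|\log(\mathcal{A})-\log(\mathcal{B})\|$, and combine with the lower bound \eqref{eqMainDistance1}. The only differences are that the paper writes $\gamma'(t)=(\log\mathcal{B}-\log\mathcal{A})*\gamma(t)$ directly rather than routing through the Fr\'echet derivative as you do, and it additionally proves that $\gamma$ is the \emph{unique} length-minimizing path, a fact invoked later in Proposition~\ref{PropMetricFormula}.
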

\begin{proof}
	At first, we prove that there exists a smooth path $\gamma$ with length $\sqrt{p} \cdot ||\log(\mathcal{A})-\log(\mathcal{B})||$. Consider the path $\gamma:[0,1] \rightarrow \mathbb{H}^{n \times n \times p}_{++}$ defined by
	\begin{equation*}
		\gamma(t)=\exp((1-t)\log(\mathcal{A})+t\log(\mathcal{B}))
	\end{equation*}
	which is from $\mathcal{A}$ to $\mathcal{B}$. Since $\mathcal{A}*\mathcal{B}=\mathcal{B}*\mathcal{A}$, we have $\gamma(t)=\mathcal{A}^{1-t}*\mathcal{B}^t$ and $\gamma'(t)=(\log(\mathcal{B})-\log(\mathcal{A}))\gamma(t)$. Hence, (\ref{eqLengthofPath}) implies 
	\begin{equation*}
		L(\gamma)=\sqrt{p} \cdot \int_0^1||\log(\mathcal{A})-\log(\mathcal{B})||dt=\sqrt{p} \cdot ||\log(\mathcal{A})-\log(\mathcal{B})||.
	\end{equation*}
	
	We claim that $\gamma$ is the unique (up to reparametrization) piecewise smooth path with length $\sqrt{p} \cdot ||\log(\mathcal{A})-\log(\mathcal{B})||$. Let $\widetilde{\gamma}$ be a piecewise smooth path from $\mathcal{A}$ to $\mathcal{B}$ of length $\sqrt{p} \cdot ||\log(\mathcal{A})-\log(\mathcal{B})||$. Then $\widetilde{\mathcal{H}}(t):=\sqrt{p} \cdot \log(\widetilde{\gamma}(t))$ is a smooth path from $\sqrt{p} \cdot \log(\mathcal{A})$ to $\sqrt{p} \cdot \log(\mathcal{B})$, which has Euclidean length $\sqrt{p} \cdot ||\log(\mathcal{A})-\log(\mathcal{B})||$ (see the proof of Lemma \ref{LemmaLowerBoundofLength}). In Euclidean space, such a path must be $H(t)=\sqrt{p} \cdot ((1-t)\log(\mathcal{A})+t\log(\mathcal{B}))$ (line segment). Throughout $[0,1]$, $H$ maps isometrically to the path $\gamma$. Thus, the uniqueness is proved.
\end{proof}

\begin{proposition}\label{PropMetricFormula}
	For any $\mathcal{A},\mathcal{B} \in \mathbb{H}^{n \times n \times p}_{++}$, 
 \begin{equation}\label{eqMainDistance3}
     \delta(\mathcal{A},\mathcal{B})=\sqrt{p} \cdot ||\log(\mathcal{A}^{-\frac{1}{2}}*\mathcal{B}*\mathcal{A}^{-\frac{1}{2}})||.
 \end{equation}
\end{proposition}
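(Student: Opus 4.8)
The plan is to reduce \eqref{eqMainDistance3} to the commuting case already handled in Lemma \ref{LemmaExpIso}, by exploiting the congruence-invariance of the Riemannian distance: conjugating by $\mathcal{A}^{-\frac{1}{2}}$ turns $\mathcal{A}$ into the identity tensor $\mathcal{I}_{n,p}$, and $\mathcal{I}_{n,p}$ commutes with every tensor under the T-product.

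First I would set $\mathcal{C}:=\mathcal{A}^{-\frac{1}{2}}$. By Lemma \ref{LemmaBinaryOper} this lies in $\mathbb{H}^{n\times n\times p}_{++}$; in particular it is T-Hermitian ($\mathcal{C}^H=\mathcal{C}$) and invertible, so the congruence $\Gamma_{\mathcal{C}}(\mathcal{Z})=\mathcal{C}^H*\mathcal{Z}*\mathcal{C}$ from Theorem \ref{MainThm1} is defined and, by the isometry property of $\Gamma_{\mathcal{C}}$ established in Section \ref{SubSect:Riemannian Metric}, preserves $\delta$. Since $\Gamma_{\mathcal{C}}(\mathcal{A})=\mathcal{A}^{-\frac{1}{2}}*\mathcal{A}*\mathcal{A}^{-\frac{1}{2}}=\mathcal{I}_{n,p}$ and $\Gamma_{\mathcal{C}}(\mathcal{B})=\mathcal{A}^{-\frac{1}{2}}*\mathcal{B}*\mathcal{A}^{-\frac{1}{2}}=:\mathcal{M}$, where $\mathcal{M}\in\mathbb{H}^{n\times n\times p}_{++}$ by Lemma \ref{LemmaBinaryOper}.(\romannumeral1), this yields $\delta(\mathcal{A},\mathcal{B})=\delta(\mathcal{I}_{n,p},\mathcal{M})$.

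Next I would apply Lemma \ref{LemmaExpIso} to the pair $\mathcal{I}_{n,p},\mathcal{M}$: they commute because $\mathcal{I}_{n,p}*\mathcal{M}=\mathcal{M}=\mathcal{M}*\mathcal{I}_{n,p}$, hence $\delta(\mathcal{I}_{n,p},\mathcal{M})=\sqrt{p}\cdot||\log(\mathcal{I}_{n,p})-\log(\mathcal{M})||$. Finally, $\exp(\mathcal{O})=\mathcal{I}_{n,p}$ and $\exp$ is injective on $\mathbb{H}^{n\times n\times p}$ by Proposition \ref{PropExpDiffeo}, so $\log(\mathcal{I}_{n,p})=\mathcal{O}$; therefore $\delta(\mathcal{A},\mathcal{B})=\sqrt{p}\cdot||\log(\mathcal{M})||=\sqrt{p}\cdot||\log(\mathcal{A}^{-\frac{1}{2}}*\mathcal{B}*\mathcal{A}^{-\frac{1}{2}})||$, which is \eqref{eqMainDistance3}.

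A second, essentially equivalent route is a ``two inequalities'' argument: the upper bound $\delta(\mathcal{A},\mathcal{B})\le\sqrt{p}\cdot||\log(\mathcal{A}^{-\frac{1}{2}}*\mathcal{B}*\mathcal{A}^{-\frac{1}{2}})||$ follows by computing the length of the geodesic \eqref{eqGeodesicTPD} through \eqref{eqLengthofPath} (write $(\mathcal{A}^{-\frac{1}{2}}*\mathcal{B}*\mathcal{A}^{-\frac{1}{2}})^t=\exp(t\log\mathcal{M})$ and use the isometry $\Gamma_{\mathcal{A}^{1/2}}$ to remove the outer factors), while the matching lower bound is Lemma \ref{LemmaLowerBoundofLength} applied to $\mathcal{I}_{n,p}$ and $\mathcal{M}$ after the same reduction. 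I do not expect a genuine obstacle: all the substantive work---the IEMI, the length formula \eqref{eqLengthofPath}, and the commuting case---is already available. The only points demanding a line of care are that $\mathcal{A}^{-\frac{1}{2}}$ is legitimately T-Hermitian so that $\Gamma_{\mathcal{C}}$ applies with $\mathcal{C}^H=\mathcal{C}$, that $\mathcal{M}$ is T-positive definite so that $\log\mathcal{M}$ is defined, and the trivial identity $\log(\mathcal{I}_{n,p})=\mathcal{O}$.
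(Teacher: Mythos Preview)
Your argument is correct and matches the paper's proof essentially step for step: reduce to the pair $(\mathcal{I}_{n,p},\mathcal{A}^{-\frac{1}{2}}*\mathcal{B}*\mathcal{A}^{-\frac{1}{2}})$ via the isometry $\Gamma_{\mathcal{A}^{\pm 1/2}}$, then invoke Lemma~\ref{LemmaExpIso} for this commuting pair and use $\log(\mathcal{I}_{n,p})=\mathcal{O}$. The paper additionally rederives the uniqueness of the geodesic along the way, but for the distance formula itself your streamlined version is exactly the same idea.
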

\begin{proof}
	Note that $\mathcal{I}_{n,p}$ and $\mathcal{A}^{-\frac{1}{2}}*\mathcal{B}*\mathcal{A}^{-\frac{1}{2}}$ commute. Hence, according to Corollary \ref{MainCor1}, the smooth path $\gamma_0:[0,1] \rightarrow \mathbb{H}^{n \times n \times p}_{++}$ defined by $\gamma_0(t)=(\mathcal{A}^{-\frac{1}{2}}*\mathcal{B}*\mathcal{A}^{-\frac{1}{2}})^t$ is the unique geodesic from $\mathcal{I}_{n,p}$ to $\mathcal{A}^{-\frac{1}{2}}*\mathcal{B}*\mathcal{A}^{-\frac{1}{2}}$. By applying the isometry (with respect to length) $\Gamma_{\mathcal{A}^{\frac{1}{2}}}$ to $\gamma_0$, we obtain the desired $\gamma$ since
	\begin{equation*}
		\gamma(t)=\Gamma_{\mathcal{A}^{\frac{1}{2}}}(\gamma_0(t))=\mathcal{A}^{\frac{1}{2}}*(\mathcal{A}^{-\frac{1}{2}}*\mathcal{B}*\mathcal{A}^{-\frac{1}{2}})^t*\mathcal{A}^{\frac{1}{2}}.
	\end{equation*}
	Since $\Gamma_{\mathcal{A}^{\frac{1}{2}}}$ is an isometry, $\gamma$ is a geodesic from $\mathcal{A}$ to $\mathcal{B}$. If there exists any other geodesic from $\mathcal{A}$ to $\mathcal{B}$, then the isometry $\Gamma_{\mathcal{A}^{-\frac{1}{2}}}$ induces another geodesic from $\mathcal{I}_{n,p}$ to $\mathcal{A}^{-\frac{1}{2}}*\mathcal{B}*\mathcal{A}^{-\frac{1}{2}}$ which is different with (i.e., not a reparametrization of) $\gamma_0$, that contradicts to the uniqueness of the geodesic. Hence, the geodesic from $\mathcal{A}$ to $\mathcal{B}$ is uniquely determined. Moreover, according to Lemma \ref{LemmaExpIso},
	\begin{align*}
		\delta(\mathcal{A},\mathcal{B})&=\delta(\mathcal{I}_{n,p},\mathcal{A}^{-\frac{1}{2}}*\mathcal{B}*\mathcal{A}^{-\frac{1}{2}})\\
		&=\sqrt{p} \cdot ||\log(\mathcal{I}_{n,p})-\log(\mathcal{A}^{-\frac{1}{2}}*\mathcal{B}*\mathcal{A}^{-\frac{1}{2}})||\\
		&=\sqrt{p} \cdot ||\log(\mathcal{A}^{-\frac{1}{2}}*\mathcal{B}*\mathcal{A}^{-\frac{1}{2}})||.
	\end{align*}
\end{proof}

In fact, we could prove the results (\ref{eqMainDistance1})--(\ref{eqMainDistance3}) without IEMI for T-positive definite tensors, as we mentioned at the last of Section \ref{SubSect:Riemannian Metric}.  Let $\hat{\delta}$ denote the distance associated to $(\mathcal{P}_{np},\hat{g})$. For a moment, we use the notation $\log$ for not only for T-positive definite tensors but also for Hermitian positive definite matrices.  For $A,B \in \mathcal{P}_N$, it was shown in \citep{bhatia2006riemannian} that $\hat{\delta}(A,B) \geq ||\log(A)-\log(B)||_F$; $\hat{\delta}(A,B) = ||\log(A)-\log(B)||_F$ if $A,B$ commute; and $\hat{\delta}(A,B) =||\log(A^{-\frac{1}{2}}BA^{-\frac{1}{2}})||_F$. Recall that $\bcirc:(\mathbb{H}^{n\times n \times  p}_{++},g,\delta) \rightarrow (\mathcal{BCP}_{np},\hat{g}|_{\mathcal{BCP}_{np}},\hat{\delta}|_{\mathcal{BCP}_{np}})$  is an isometry. In addition, considering the diagram on the proof of Proposition \ref{PropExpDiffeo}, we have that the logarithm and $\bcirc$ commute. Therefore, for $\mathcal{A},\mathcal{B} \in \mathbb{H}^{n\times n \times  p}_{++}$, 
\begin{equation*}
    \delta(\mathcal{A},\mathcal{B})=\hat{\delta}(\bcirc(\mathcal{A}),\bcirc(\mathcal{B}))
    \geq ||\log(\bcirc(\mathcal{A}))-\log(\bcirc(\mathcal{B}))||_F
    =||\bcirc(\log(\mathcal{A}))-\bcirc(\log(\mathcal{B}))||_F
\end{equation*}
which exactly implies (\ref{eqMainDistance1}), and similarly we can obtain (\ref{eqMainDistance2})-(\ref{eqMainDistance3}) either.

\begin{theorem}\label{ThmComplete}
	$(\mathbb{H}^{n \times n \times p}_{++},\delta)$ is complete.
\end{theorem}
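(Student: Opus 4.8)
The plan is to prove directly that every $\delta$-Cauchy sequence in $\mathbb{H}^{n \times n \times p}_{++}$ converges, using the distance estimates already established. First I would take a $\delta$-Cauchy sequence $\{\mathcal{A}_k\}$. By the lower bound \eqref{eqMainDistance1} of Lemma \ref{LemmaLowerBoundofLength} we have $\|\log(\mathcal{A}_k)-\log(\mathcal{A}_l)\| \le \tfrac{1}{\sqrt p}\,\delta(\mathcal{A}_k,\mathcal{A}_l)$, so $\{\log(\mathcal{A}_k)\}$ is Cauchy in the finite-dimensional real vector space $\mathbb{H}^{n \times n \times p}$ equipped with the Frobenius norm, hence converges to some $\mathcal{L}\in\mathbb{H}^{n \times n \times p}$. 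I would then set $\mathcal{A}:=\exp(\mathcal{L})$, which lies in $\mathbb{H}^{n \times n \times p}_{++}$ by Proposition \ref{PropExpDiffeo}; since $\exp$ (equivalently $\log$) is a diffeomorphism by that proposition, $\mathcal{A}_k\to\mathcal{A}$ in the Euclidean topology on $\mathbb{H}^{n \times n \times p}_{++}$.

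It then remains to upgrade this Euclidean convergence to $\delta$-convergence. For this I would invoke the explicit distance formula \eqref{eqMainDistance3} of Proposition \ref{PropMetricFormula}, which gives
\[
\delta(\mathcal{A}_k,\mathcal{A})=\sqrt{p}\cdot\big\|\log\!\big(\mathcal{A}_k^{-\frac12}*\mathcal{A}*\mathcal{A}_k^{-\frac12}\big)\big\|.
\]
The assignment $(\mathcal{X},\mathcal{Y})\mapsto \mathcal{X}^{-\frac12}*\mathcal{Y}*\mathcal{X}^{-\frac12}$ is continuous on $\mathbb{H}^{n \times n \times p}_{++}\times\mathbb{H}^{n \times n \times p}_{++}$: it is the conjugate, through the linear homeomorphism $\bcirc$ and Lemma \ref{Lemma:BasicPropertiesTproduct}.(\romannumeral1), of the corresponding map on Hermitian positive definite matrices, which is continuous since matrix inversion, the principal square root, and multiplication are continuous there. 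Combined with continuity of $\log$ (Proposition \ref{PropExpDiffeo}) and the identity $\mathcal{A}^{-\frac12}*\mathcal{A}*\mathcal{A}^{-\frac12}=\mathcal{I}_{n,p}$, $\log(\mathcal{I}_{n,p})=\mathcal{O}$, this yields $\delta(\mathcal{A}_k,\mathcal{A})\to0$. Hence $\{\mathcal{A}_k\}$ converges in $(\mathbb{H}^{n \times n \times p}_{++},\delta)$ and completeness follows.

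An essentially equivalent route is to pass through the isometric embedding $\bcirc:(\mathbb{H}^{n \times n \times p}_{++},\delta)\to(\mathcal{BCP}_{np},\hat\delta|_{\mathcal{BCP}_{np}})$ of Section \ref{SubSect:Riemannian Metric}: since $\mathcal{BCP}_{np}$ is totally geodesic (Theorem \ref{MainThm2}) and ``block circulant'' is a closed linear condition on entries, $\mathcal{BCP}_{np}$ is a closed subset of the complete metric space $(\mathcal{P}_{np},\hat\delta)$ \citep{bhatia2006riemannian}, hence itself complete; the point requiring care here is that the intrinsic distance $\hat\delta|_{\mathcal{BCP}_{np}}$ agrees with the restriction of the ambient $\hat\delta$, which uses total geodesicity together with uniqueness of geodesics in the Cartan--Hadamard manifold $\mathcal{P}_{np}$.

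I expect the main obstacle to be the upper-bound step, that is, deducing $\delta(\mathcal{A}_k,\mathcal{A})\to0$ from Euclidean convergence: the lower bound \eqref{eqMainDistance1} is immediately available, but converting back requires that $\delta$ be controlled near a point by the Euclidean distance of logarithms, and this is precisely what the distance formula \eqref{eqMainDistance3} provides once the continuity of the tensor operations entering it is recorded. Everything else is soft: finite-dimensional completeness for the logarithms and the diffeomorphism property of $\exp$.
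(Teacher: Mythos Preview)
Your proposal is correct and follows essentially the same approach as the paper: both use the lower bound \eqref{eqMainDistance1} to transfer the Cauchy property to the logarithms in the Euclidean space $\mathbb{H}^{n\times n\times p}$, obtain a limit there, and then use the explicit distance formula \eqref{eqMainDistance3} together with continuity of the tensor operations to upgrade Euclidean convergence to $\delta$-convergence. The paper packages the last step as ``$\exp:(\mathbb{H}^{n\times n\times p},\|\cdot\|)\to(\mathbb{H}^{n\times n\times p}_{++},\delta)$ is continuous'' and verifies it by tracking T-eigenvalues, which is the same argument you give via continuity of inversion, square root, multiplication, and $\log$; your alternative route through the isometric embedding into $\mathcal{P}_{np}$ is sound but not the one the paper uses.
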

\begin{proof}
	Let $\{\mathcal{A}_n\}$ be a Cauchy sequence in $(\mathbb{H}^{n \times n \times p}_{++},\delta)$. Since $\exp:\mathbb{H}^{n \times n \times p} \rightarrow \mathbb{H}^{n \times n \times p}_{++}$ is bijective by Proposition \ref{PropExpDiffeo}, for each positive integer $n$, there exists $\mathcal{X}_n \in \mathbb{H}^{n \times n \times p}$ such that $\mathcal{A}_n=\exp(\mathcal{X}_n)$. By Lemma \ref{LemmaLowerBoundofLength}, for any positive integers $m,n$,
	\begin{equation*}
		 ||\mathcal{X}_n-\mathcal{X}_m|| \leq \frac{1}{\sqrt{p}} \cdot\delta(\mathcal{A}_n,\mathcal{A}_m).
	\end{equation*} 
	Hence, $\{X_n\}$ is a Cauchy sequence in the Euclidean space $(\mathbb{H}^{n \times n \times p},||\cdot||)$. Since $(\mathbb{H}^{n \times n \times p},||\cdot||)$ is complete, $\{\mathcal{X}_n\}$ has a limit $\mathcal{X} \in \mathbb{H}^{n \times n \times p}$, that is,
	\begin{equation*}
		||\mathcal{X}_n-\mathcal{X}||=||\log(\mathcal{A}_n)-\mathcal{X}|| \rightarrow 0~~\text{as}~~n \rightarrow \infty.
	\end{equation*}
	Now, we  prove that $\exp:(\mathbb{H}^{n \times n \times p},||\cdot||) \rightarrow (\mathbb{H}^{n \times n \times p}_{++},\delta)$ is continuous, then clearly 
	\begin{equation*}
		\delta(\mathcal{A}_n,\exp(\mathcal{X})) \rightarrow 0 ~~\text{as}~~n \rightarrow \infty.
	\end{equation*}
	Suppose that $\mathcal{Y}_n \rightarrow \mathcal{Y}$ as $n \rightarrow \infty$ in $(\mathbb{H}^{n \times n \times p},||\cdot||)$. By Proposition \ref{PropMetricFormula},
	\begin{equation*}
		\delta(\exp(\mathcal{Y}_n),\exp(\mathcal{Y}))=\sqrt{p} \cdot ||\log(\exp(\mathcal{Y})^{-\frac{1}{2}}*\exp(\mathcal{Y}_n)*\exp(\mathcal{Y})^{-\frac{1}{2}})||.
	\end{equation*}
	In order to check $\delta(\exp(\mathcal{Y}_n),\exp(\mathcal{Y})) \rightarrow 0$, it suffices to observe that the T-eigenvalues of $\exp(\mathcal{Y})^{-\frac{1}{2}}*\exp(\mathcal{Y}_n)*\exp(\mathcal{Y})^{-\frac{1}{2}}$ tend to $1$ so that $\exp(\mathcal{Y})^{-\frac{1}{2}}*\exp(\mathcal{Y}_n)*\exp(\mathcal{Y})^{-\frac{1}{2}} \rightarrow \mathcal{I}_{n,p}$ as $n \rightarrow \infty$. It follows from that the eigenvalues of the block circulant matrix
	\begin{equation*}
		\bcirc(\exp(\mathcal{Y})^{-\frac{1}{2}}*\exp(\mathcal{Y}_n)*\exp(\mathcal{Y})^{-\frac{1}{2}})=\exp(\bcirc(\mathcal{Y}))^{-\frac{1}{2}}*\exp(\bcirc(\mathcal{Y}_n))*\exp(\bcirc(\mathcal{Y}))^{-\frac{1}{2}} 
	\end{equation*} 
	tend to $1$.
\end{proof}

Since $(\mathbb{H}^{n \times n \times p}_{++},g)$ is connected and its associated metric space $(\mathbb{H}^{n \times n \times p}_{++},\delta)$ is complete, the famous Hopf-Rinow theorem \citep[p.146-147]{do1992riemannian} implies the geodesic completeness of $(\mathbb{H}^{n \times n \times p}_{++},g)$. 

\begin{corollary}
	$(\mathbb{H}^{n \times n \times p}_{++},g)$ is (geodesically) complete.
\end{corollary}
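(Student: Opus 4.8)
The plan is to obtain the corollary as an immediate application of the Hopf--Rinow theorem, so the only work is to check its two hypotheses for the Riemannian manifold $(\mathbb{H}^{n \times n \times p}_{++},g)$: that it is connected, and that the metric space $(\mathbb{H}^{n \times n \times p}_{++},\delta)$ built from its length structure is complete.

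First I would note that $(\mathbb{H}^{n \times n \times p}_{++},g)$ is connected. This is already contained in Theorem \ref{MainThm2}, where the manifold is shown to be path-connected --- any two points are joined by the geodesic of Corollary \ref{MainCor1} --- and path-connectedness implies connectedness. Next, the metric completeness of $(\mathbb{H}^{n \times n \times p}_{++},\delta)$ is precisely Theorem \ref{ThmComplete}. It is worth recording that the $\delta$ appearing there is genuinely the distance induced by $g$: it is defined as the infimum of the lengths $L(\gamma)$ over piecewise smooth paths, and $L$ is computed from $g$ via \eqref{eqLengthofPath}, so no additional identification is needed.

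With both hypotheses in hand, the Hopf--Rinow theorem \citep[p.146-147]{do1992riemannian} applies and yields that $(\mathbb{H}^{n \times n \times p}_{++},g)$ is geodesically complete; equivalently, for every $\mathcal{P} \in \mathbb{H}^{n \times n \times p}_{++}$ the exponential map at $\mathcal{P}$ is defined on all of $T_{\mathcal{P}}\mathbb{H}^{n \times n \times p}_{++}$ and every maximal geodesic has domain $\R$. There is essentially no obstacle here: the substantive analytic work --- the lower bound \eqref{eqMainDistance1} and the continuity of $\exp$ --- was already carried out in proving Theorem \ref{ThmComplete}. Together with the simple connectedness established in Theorem \ref{MainThm2} and the nonpositive curvature, this corollary completes the verification that $(\mathbb{H}^{n \times n \times p}_{++},g)$ is a Cartan--Hadamard manifold.
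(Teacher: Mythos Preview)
Your proposal is correct and follows essentially the same approach as the paper: both verify connectedness (via Theorem \ref{MainThm2}) and metric completeness (via Theorem \ref{ThmComplete}), then invoke the Hopf--Rinow theorem \citep[p.146-147]{do1992riemannian} to conclude geodesic completeness. Your additional remarks about $\delta$ being the length metric of $g$ and the link to the Cartan--Hadamard property are accurate elaborations but not required for the argument.
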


In \citep{bhatia2006riemannian}, the author also proved that $(\mathcal{P}_{np},\hat{\delta})$ has nonpositive curvature, i.e., is a CAT(0) space \citep{bridson2013metric} by verifying that it satisfies the \emph{semi-parallelogram law} \citep[Proposition 5]{bhatia2006riemannian}. Since $(\mathbb{H}^{n\times n \times p}_{++}, g)$ is a totally geodesic submanifold of $(\mathcal{P}_{np \times np},\hat{g})$, then the metric space $(\mathbb{H}^{n\times n \times p}_{++}, \delta)$ must also satisfy the semi-parallelogram law. To sum up, we have the following analogous results for $\mathbb{H}^{n \times n \times p}_{++}$.

\begin{corollary}\label{MainCor3}
	$(\mathbb{H}^{n\times n \times p}_{++}, \delta)$ has nonpositive curvature, i.e., is a CAT(0) space.
\end{corollary}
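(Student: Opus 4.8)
The plan is to transfer the nonpositive curvature of $(\mathcal{P}_{np},\hat{\delta})$, established by Bhatia and Holbrook, to $(\mathbb{H}^{n\times n\times p}_{++},\delta)$ along the isometry $\bcirc$, using crucially that the image $\mathcal{BCP}_{np}$ is a geodesically convex subset of $\mathcal{P}_{np}$. Recall from \citep[Proposition 5]{bhatia2006riemannian} that $(\mathcal{P}_{np},\hat{\delta})$ satisfies the \emph{semi-parallelogram law}: for every $P,Q\in\mathcal{P}_{np}$, the midpoint $M=P\#Q$ of the geodesic \eqref{eqGeodesicSubmanifold} satisfies
\begin{equation*}
  \hat{\delta}(P,Q)^2+4\,\hat{\delta}(M,R)^2 \le 2\,\hat{\delta}(P,R)^2+2\,\hat{\delta}(Q,R)^2 \qquad\text{for all } R\in\mathcal{P}_{np},
\end{equation*}
and that a complete geodesic metric space fulfilling this law (equivalently, the CN-inequality of Bruhat--Tits) is a CAT(0) space \citep{bridson2013metric}.

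First I would make the convexity precise. For $\mathcal{A},\mathcal{B}\in\mathbb{H}^{n\times n\times p}_{++}$, formula \eqref{eqGeodesicSubmanifold} and the discussion after it show that the geodesic of $(\mathcal{P}_{np},\hat{g})$ joining $\bcirc(\mathcal{A})$ and $\bcirc(\mathcal{B})$ is $\hat{\gamma}(t)=\bcirc(\gamma(t))$, where $\gamma$ is the weighted geometric mean curve of Corollary \ref{MainCor1}; in particular $\hat{\gamma}(t)\in\mathcal{BCP}_{np}$ for every $t\in[0,1]$, and its midpoint equals $\bcirc(\mathcal{A})\#\bcirc(\mathcal{B})=\bcirc(\mathcal{A}\#\mathcal{B})$ by Proposition \ref{PropBcircGeometricMean}. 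Hence $\mathcal{BCP}_{np}$ is geodesically convex in $\mathcal{P}_{np}$ and, as already recorded in Section \ref{SubSect:Riemannian Metric}, $\bcirc$ is an isometry of $(\mathbb{H}^{n\times n\times p}_{++},\delta)$ onto $(\mathcal{BCP}_{np},\hat{\delta}|_{\mathcal{BCP}_{np}})$.

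Then I would restrict and transport. Given $P=\bcirc(\mathcal{A})$ and $Q=\bcirc(\mathcal{B})$ in $\mathcal{BCP}_{np}$, the midpoint $M=\bcirc(\mathcal{A}\#\mathcal{B})$ furnished by the semi-parallelogram law lies in $\mathcal{BCP}_{np}$ by the previous step, and the displayed inequality, being valid for every $R\in\mathcal{P}_{np}$, holds in particular for every $R\in\mathcal{BCP}_{np}$. Thus $(\mathcal{BCP}_{np},\hat{\delta}|_{\mathcal{BCP}_{np}})$ satisfies the semi-parallelogram law, and pulling back along the isometry $\bcirc$ shows that $(\mathbb{H}^{n\times n\times p}_{++},\delta)$ does too, with $\mathcal{A}\#\mathcal{B}$ serving as the midpoint of $\mathcal{A}$ and $\mathcal{B}$ (consistently with Corollary \ref{MainCor2}). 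Since $(\mathbb{H}^{n\times n\times p}_{++},\delta)$ is a geodesic space (Corollary \ref{MainCor1}) and complete (Theorem \ref{ThmComplete}), the characterization above gives that it is a CAT(0) space, i.e.\ has nonpositive curvature.

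I expect the only point needing care to be the geodesic convexity of $\mathcal{BCP}_{np}$ in $\mathcal{P}_{np}$: one must check that the ambient geodesic between two block circulant Hermitian positive definite matrices stays block circulant. This, however, is immediate because $\bcirc$ intertwines matrix square roots, inverses, and real powers with the corresponding tensor operations --- exactly as used in Lemma \ref{LemmaBinaryOper} and throughout Section \ref{Sect:Preliminaries} --- so $\hat{\gamma}(t)=\bcirc(\gamma(t))$ with $\gamma$ taking values in $\mathbb{H}^{n\times n\times p}_{++}$. The remainder is the same reduction-to-block-circulant-matrices principle already employed for Theorems \ref{MainThm1} and \ref{MainThm2} and Corollaries \ref{MainCor1} and \ref{MainCor2}.
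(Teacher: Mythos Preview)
Your proposal is correct and follows essentially the same route as the paper: both invoke the semi-parallelogram law for $(\mathcal{P}_{np},\hat{\delta})$ from \citep{bhatia2006riemannian} and transfer it to $(\mathbb{H}^{n\times n\times p}_{++},\delta)$ via the isometric embedding $\bcirc$ onto the totally geodesic (equivalently, geodesically convex) subset $\mathcal{BCP}_{np}$, which is precisely Theorem~\ref{MainThm2}. Your write-up is in fact more explicit than the paper's one-line argument, spelling out why the midpoint $\bcirc(\mathcal{A})\#\bcirc(\mathcal{B})$ remains in $\mathcal{BCP}_{np}$ and why the ambient inequality restricts cleanly.
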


Recall that for a complete Riemannian manifold, it has nonpositive curvature if and only if its metric space is a CAT(0) space. Thus, we have the following corollaries.

\begin{corollary}
	$(\mathbb{H}^{n\times n \times p}_{++}, g)$ has nonpositive curvature.
\end{corollary}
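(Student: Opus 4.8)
The plan is to obtain the statement as a formal consequence of the two results immediately preceding it, namely the geodesic completeness of $(\mathbb{H}^{n \times n \times p}_{++},g)$ and Corollary \ref{MainCor3}, which says that the associated length metric space $(\mathbb{H}^{n \times n \times p}_{++},\delta)$ is a CAT(0) space. The only additional ingredient is the classical comparison-geometry fact recalled just above: for a complete Riemannian manifold, having nonpositive sectional curvature is equivalent to its length metric space being CAT(0), equivalently satisfying the semi-parallelogram law (see \citep{bridson2013metric, do1992riemannian}). Since both hypotheses of this equivalence have already been verified, the corollary follows at once.

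Concretely, first I would note that the Riemannian distance $\delta$ induced by $g$ is a genuine length metric inducing the manifold topology, and that by the Hopf--Rinow theorem (already invoked to establish geodesic completeness) minimizing geodesics between any two points of $(\mathbb{H}^{n \times n \times p}_{++},g)$ exist and the exponential map is defined on the whole tangent space. Then I would apply the cited equivalence: the CAT(0) property of $(\mathbb{H}^{n \times n \times p}_{++},\delta)$ established in Corollary \ref{MainCor3}, together with completeness, forces every sectional curvature of $(\mathbb{H}^{n \times n \times p}_{++},g)$ to be $\leq 0$. Simple connectedness, needed for the full Cartan--Hadamard picture, is also available from Theorem \ref{MainThm2}, though it is not required for the curvature conclusion itself.

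I do not expect any genuine obstacle; the content is entirely contained in Corollary \ref{MainCor3} and the completeness statement, and the proof amounts to quoting the right comparison theorem. Should one prefer an intrinsic verification, an alternative is to transport the known expression for the curvature tensor of the trace metric $\hat{g}$ on $\mathcal{P}_{np}$ through the isometric embedding $\bcirc$ and to use that $(\mathbb{H}^{n \times n \times p}_{++},g)$ sits inside $(\mathcal{P}_{np},\hat{g})$ as a totally geodesic submanifold, so that its sectional curvatures are exactly the sectional curvatures of those $2$-planes of $\mathcal{P}_{np}$ that are tangent to $\mathcal{BCP}_{np}$, all of which are nonpositive; but this route is more computational and unnecessary given Corollary \ref{MainCor3}.
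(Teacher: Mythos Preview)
Your proposal is correct and matches the paper's own argument essentially verbatim: the paper simply recalls that for a complete Riemannian manifold, nonpositive sectional curvature is equivalent to the associated metric space being CAT(0), and then invokes Corollary~\ref{MainCor3} together with the already-established completeness. Your alternative route via the totally geodesic embedding is a reasonable aside but, as you note, unnecessary.
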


\begin{corollary}\label{CorCartanHadamard}
	$(\mathbb{H}^{n \times n \times p}_{++},g)$  is a Cartan-Hadamard-Riemannian manifold.
\end{corollary}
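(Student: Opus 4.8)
The plan is to verify directly the three defining conditions of a Cartan--Hadamard--Riemannian manifold, namely that $(\mathbb{H}^{n\times n\times p}_{++},g)$ is a simply connected, geodesically complete Riemannian manifold of nonpositive sectional curvature. Each ingredient has already been prepared in the preceding results, so the argument is essentially an assembly rather than a new construction.

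First I would record simple connectivity: by Proposition \ref{PropOpenConvexCone}, $\mathbb{H}^{n\times n\times p}_{++}$ is an open convex cone inside the Euclidean space $\mathbb{H}^{n\times n\times p}$, hence contractible and in particular simply connected; this was also noted in Theorem \ref{MainThm2}. Next, for geodesic completeness I would invoke the corollary following Theorem \ref{ThmComplete}: since $(\mathbb{H}^{n\times n\times p}_{++},g)$ is connected and the associated metric space $(\mathbb{H}^{n\times n\times p}_{++},\delta)$ is complete by Theorem \ref{ThmComplete}, the Hopf--Rinow theorem gives geodesic completeness. Finally, nonpositive curvature is exactly the statement of the corollary immediately preceding this one, deduced from the CAT(0) property of $(\mathbb{H}^{n\times n\times p}_{++},\delta)$ (Corollary \ref{MainCor3}) together with the equivalence, valid for complete Riemannian manifolds, between nonpositive sectional curvature and the CAT(0) property of the induced length-metric space. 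Putting the three facts together yields the assertion.

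Since all the components are in hand, there is no real obstacle remaining here; the only subtlety, already handled in Sections \ref{SubSect:infinitesimal exponential metric increasing property} and \ref{SubSect:Completeness}, is that being a totally geodesic submanifold of the Cartan--Hadamard manifold $(\mathcal{P}_{np},\hat g)$ does not by itself force the submanifold to be complete or of nonpositive curvature; those had to be proved separately, either via the IEMI for T-Hermitian tensors (Lemma \ref{LemmaIEMI}) or via the isometric embedding $\bcirc$ onto $(\mathcal{BCP}_{np},\hat g|_{\mathcal{BCP}_{np}})$. Granting that groundwork, the present corollary reduces to the observation that ``simply connected $+$ complete $+$ nonpositive sectional curvature'' is precisely the definition of a Cartan--Hadamard--Riemannian manifold.
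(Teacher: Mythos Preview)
Your proposal is correct and matches the paper's approach exactly: the corollary is stated without proof in the paper, being an immediate assembly of simple connectivity (Theorem \ref{MainThm2}), geodesic completeness (the corollary following Theorem \ref{ThmComplete}), and nonpositive sectional curvature (the corollary following Corollary \ref{MainCor3}). Your remark about the subtlety that total geodesy alone does not guarantee completeness or nonpositive curvature also faithfully reflects the paper's own caveat at the end of Section \ref{SubSect:Riemannian Metric}.
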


A Cartan-Hadamard-Riemannian manifold has a strong topological property that it is diffeomorphic to a Euclidean space \citep[p.149]{do1992riemannian}, thus so is $\mathbb{H}^{n \times n \times p}_{++}$.

\bigskip
\section{Final remarks}

The geometric mean of two positive definite matrices in \eqref{eqMatrixGeometricMean} is the metric midpoint of \( A \) and \( B \) for the trace metric on the set \( \mathcal{P} \) of positive definite matrices of some fixed dimension (see, e.g., \citep{bhatia2009positive}).
It is natural to consider an averaging technique over this metric to extend this mean to more than two positive definite matrices. Moakher \citep{moakher2005differential} and then Bhatia-Holbrook \citep{bhatia2006riemannian} suggested extending the geometric mean to \( n \) number of positive definite matrices $A_1, \ldots, A_n$ by taking the mean to be the unique minimizer of the sum of the squares of the distances:
\[ 
g_n(A_1, \ldots, A_n) = \underset{X > 0}{\mathrm{arg min}} \sum_{i=1}^{n} \delta^2(X, A_i),
\]
where \( \delta(X, A_i) = \|\log X^{-1/2}A_iX^{-1/2}\|_F \). This idea had been anticipated by Cartan and then Riemannian centers of mass in the setting of Riemannian manifolds was carried out by Karcher (see \citep{karcher1977riemannian} for more details).
Another approach to generalizing the geometric mean to \( n \)-variables, independent of metric notions, was suggested by Ando, Li, and Mathias \citep{ando2004geometric}. It established ten desirable properties for extended geometric means. 
Finding the geometric mean of T-positive definite tensors $\mathcal{A}_1,...,\mathcal{A}_n \in \mathbb{H}^{n \times n \times p}_{++}$ is clearly of interest for further work.

On the other hand, for $\mathcal{T}$ to be T-positive definite, $\bcirc(\mathcal{T})$ must be Hermitian positive definite with a special structure. For example, if the tensor $\mathcal{T}=\left[T^{(1)}\bpp T^{(2)}\bpp T^{(3)}\right]$ is a Hermitian T-positive definite, then $T^{(3)} = (T^{(2)})^{H}$, which is a rather special form of tensor. It is questionable how to relax this constraint to extend to a broader range of tensor structures. This will be left for future work.

\bigskip

\section*{Acknowledgments} 
The work of H.~Choi and T.~Kim
was supported by the National Research Foundation of Korea(NRF) grant funded by the Korea government(MSIT) (No. 2022R1A5A1033624). J.-H.~Ju and Y.~Kim were supported by the Basic Science Program of the NRF of Korea (NRF-2022R1C1C1010052). 

\bibliographystyle{abbrvnat}
\bibliography{GM}

\end{document}